\documentclass[16pt]{amsart}
\usepackage{amsmath,amsfonts,amssymb,amsthm,amscd,latexsym,euscript,xypic, verbatim}
\usepackage{epsf}
\usepackage{epsfig}
\usepackage{amstext}
\usepackage{xspace}
\usepackage{amsfonts}
\usepackage{amsmath}
\usepackage{amssymb}
\usepackage{xspace}
\usepackage{graphicx}
\usepackage[active]{srcltx}
\usepackage[all]{xy}
\usepackage{graphics}

\newcounter{tempcounter}

\swapnumbers
\theoremstyle{plain}

\newtheorem*{thm1.2}{(1.2) Theorem}
\newtheorem*{thm1.3}{(1.3) Theorem}
\newtheorem*{thm1.4}{(1.4) Theorem}
\newtheorem*{propA*}{Proposition A}
\newtheorem*{propB*}{Proposition B}
\newtheorem*{thmC*}{Theorem C}
\newtheorem*{propD*}{Proposition D}
\newtheorem{prop}{Proposition}[section]
\newtheorem{thm}[prop]{Theorem}
\newtheorem{cor}[prop]{Corollary}
\newtheorem{lemma}[prop]{Lemma}

\theoremstyle{definition}

\newtheorem{Def}[prop]{Definition}
\newtheorem*{Def*}{Definition}

\newtheorem{example}[prop]{Example}

\newtheorem*{notation*}{Notation}
\newtheorem*{question*}{Question}

\newcommand{\pspaces}{\text{\rm Spaces}_{\ast}}
\newcommand{\calc}{\mathcal C}
\newcommand{\calk}{\mathcal K}

\pagestyle{plain}

\newcommand{\kk}{\mathcal K}
\newcommand{\z}{\mathbb Z}

\newcommand{\ra}{\rightarrow}

\xyoption{2cell}

\title[Homotopy colimits of nilpotent spaces]{Homotopy colimits of nilpotent spaces}


\author[W. Chach\'olski]{Wojciech Chach\'olski}
\thanks{The first author was partially supported by G\"oran Gustafsson Stiftelse and VR grants.}
\author[E. Dror Farjoun]{Emmanuel Dror Farjoun}
\author[R. Flores]{Ram\'on Flores}
\thanks{The third and fourth authors are partially supported by MTM2010-20622, and UNAB10-4E-378 ``Una manera de hacer Europa".}
\author[J. Scherer]{J\'er\^ome Scherer}

\address{Department of Mathematics\\ KTH Stockholm
Lindstedtsv\" agen 25 \\ 10044 Stockholm \\ Sweden}
\email{wojtek@math.kth.se}

\address{Department of Mathematics\\ Hebrew University of Jerusalem\\
Givat Ram\\Jerusalem 91904\\ Israel}
\email{farjoun@math.huji.ac.il}

\address{Department of Statistics\\ Universidad Carlos III de Madrid\\
Avda. de la Universidad Carlos III, 22 \\ 28270 Colmenarejo (Madrid) \\ Spain}
\email{rflores@est-econ.uc3m.es }

\address{Department of Mathematics\\ EPFL Lausanne \\
Station 8 \\1015 Lausanne \\ Switzerland }
\email{jerome.scherer@epfl.ch}

\subjclass[2010]{Primary 55P60; Secondary 20F18; 55N20; 55R35; 55P20}




\begin{document}

\begin{abstract}
We show that cellular approximations of nilpotent Postnikov stages are
always nilpotent Postnikov stages, in particular classifying spaces of nilpotent groups
are turned into classifying spaces of nilpotent groups. We use a modified Bousfield-Kan
homology completion tower $z_k X$ whose terms we prove are all $X$-cellular for any $X$. 
As straightforward consequences, we show that if $X$ is $\mathcal K$-acyclic 
and nilpotent for a given homology theory $\mathcal K$, then so are all its Postnikov sections $P_nX$,
and that any nilpotent space for which the space of pointed self-maps
$\text{map}_*(X,X)$ is ``canonically" discrete must be aspherical.
\end{abstract}

\date{\today}

\maketitle

\section{Introduction}
The Postnikov tower of a nilpotent space $X$ is considered classically as a mean to reconstruct its Postnikov sections
$P_n X$ out of  Eilenberg--Mac Lane spaces (basic homotopical  building blocks)  by principal
fibration sequences (see Example~\ref{ex:Postnikov} for what we mean by  Postnikov sections). 
Eventually the space $X$ itself is recovered as homotopy limit of the Postnikov tower.

In this work we change the perspective and show that, if $X$ is  nilpotent, then its Postnikov sections can  
be constructed out  of $X$ by means of wedges, homotopy push-outs, and telescopes
(see Theorem~\ref{thm main}). This is certainly not true for arbitrary spaces
and, when $X$ is nilpotent, it allows us to deduce strong  properties of its  Postnikov sections.
Let us first mention a few of these consequences  and then present our methods and techniques. We provide a topological 
strengthening of the classical Serre class statements on the relation between homotopy and homology groups, see \cite{MR0059548}.

\medskip

\noindent
\ref{thm homology}.~{\bf Theorem.}\
\emph{
Let $\kk$ be a reduced homology theory.
\begin{enumerate}
\item Assume $X$ is $\kk$-acyclic. If $P_nX$ is nilpotent, then it is also $\kk$-acyclic.
\item Assume  $X$ is nilpotent. Then $\prod_{k \geq 1} K(\pi_kX ,k)$ is $\kk$-acyclic if and only if
$\prod_{k \geq 1} K(H_k(X,{\mathbb Z}),k)$ is  $\kk$-acyclic.
\item If $K(G,1)$ is $\kk$-acyclic, then so is $K(G/\Gamma_n G,1)$ for any $n$.
\end{enumerate}
}

\medskip

In~\cite{MR95c:55010} and \cite[Lemma 5.3]{MR98m:55009}  Bousfield proved his celebrated result known
as the ``key lemma". All his  results   related to understanding the  failure  of preservation of fibrations by localizations
depended on it. The key lemma implies for example that,  if $X$ Êis simply connected, then  the map $\pi_n\colon\text{map}_{\ast}(X,X)\to
\text{Hom}(\pi_nX,\pi_nX)$ is a weak equivalence if and only if $X$Ê is weakly equivalent to
$K(\pi_nX,n)$. If  $X$ Êis not simply connected, then this is far from being true. In this article we offer
an extension of this last result to spaces with a nilpotent fundamental group.

\medskip
\noindent
\ref{cor maindiscmapsp}.~{\bf Theorem.}\
\emph{Let $X$ be a connected space whose fundamental group  $\pi_1X$ is nilpotent. Assume that
the map $\pi_1\colon\text{map}_{\ast}(X,X)\to\text{Hom}(\pi_1X,\pi_1X)$ is a weak equivalence.
Then $X$ is weakly equivalent to $K(\pi_1X, 1)$.
}
\medskip

For arbitrary fundamental groups, this fails as illustrated in~\cite[Example 2.6]{MR2351607}
by a space $X$ with $\pi_1 X \cong \Sigma_3$ whose universal cover is the homotopy fiber of the degree $3$
map on the sphere $S^3$. We  come back to this space in Example~\ref{ex symmetric}.

\medskip

To prove the above results we use cellularization techniques.
Looking at spaces through the eyes of a given space $A$ via the pointed mapping space $\text{\rm map}_{\ast}(A, - )$
is the central idea (\cite{MR98f:55010}, \cite{MR97i:55023}). Recently cellularization has found  applications in other contexts:
Dwyer, Greenlees, and Iyengar used it to investigate duality in stable homotopy, \cite{MR2200850}, see also \cite{MR2520402};
periodicity phenomena in unstable homotopy theory are the subject of \cite{MR2242617}; cellularity has also been intensively 
studied in group theory, see for example \cite{MR2269828} for an early general reference, and \cite{MR2995665} for a recent
point of view on finite (simple) groups; in \cite{MR2581213} Kiessling computed explicitly the cellular lattice of certain perfect 
chain complexes, refining the Bousfield lattice.

Classically $A$ is the zero-sphere $S^0$ and
we are doing standard homotopy theory. In this context the cellularization of a space is nothing else than the cellular approximation of
the space, the best approximation built out of $S^0$ and its suspensions~$S^n$ via pointed homotopy colimits.
Changing the sphere for another space $A$ allows us to modify our point of view and we do now $A$-homotopy theory.
The cellularization functor $\text{cell}_A: \text{Spaces}_{\ast}\to \text{Spaces}_{\ast}$ on pointed
spaces retains from a space its essence from the point of view of $A$.
A space of the form $\text{cell}_A Y$ is $A$-cellular in the sense that it can be constructed from $A$ and its suspensions
by pointed homotopy colimits. When a space $X$ is $A$-cellular we write $X \gg A$.

It turns out that to prove the above theorems we need to show that $\text{cell}_A$
behaves nicely on nilpotent Postnikov sections of any space.
A  result of the first author \cite[Section 7]{MR97i:55023} says that
the structure map $\text{cell}_A X \to X$ is a principal fibration. Hence, the cellularization of a nilpotent space is always nilpotent,
which hints at the tame behavior of $\text{cell}_A$ on nilpotent spaces. We show
in particular in Corollary~\ref{cor npostpreserv} that if $X$ is a nilpotent $n$-Postnikov  stage
(see the end of Section~\ref{sec:notation}), then so is $\text{cell}_A X$.
When $n=2$, this result is in perfect accord with a purely group-theoretical result, \cite[Theorem~1.4 (1)]{MR2269828},
showing that in the category of groups any cellularization of a nilpotent group is nilpotent.
To understand how cellularization functors affect Postnikov sections, we rely on the following statement which we regard as the main result of this paper.

\medskip
\noindent
\ref{thm main}.~{\bf Theorem.}\
\emph{ Let $X$ be a space. If  $P_nX$
 is nilpotent, then $P_nX\gg X$, namely any Postnikov section is $X$-cellular.
}

\medskip

This theorem is a consequence of  preservation of polyGEM by cellularization functors.
A space is called a $1$-polyGEM if it is a GEM, i.e.\  a product of Eilenberg-MacLane spaces $K(A_i,i)$ for abelian  groups~$A_i$.
It is called an $n$-polyGEM, for $n>1$, if it is weakly equivalent to a retract of the homotopy fiber of a map $f\colon X\to Y$
where $X$ is $(n-1)$-polyGEM and $Y$ is a GEM. We prove in Theorem~\ref{thm preservationpolygem} that cellularization
functors turn $n$-polyGEMs into $n$-polyGEMs.

To prove the preservation of polyGEMs we need detection tools for polyGEMs which behave well with respect to cellularity.
It is well known that a space $X$ is a GEM  if and only if it is a retract
 of $\z X,$ where $\z X$
 is the Dold-Thom free abelian group construction on $X$, see for example \cite[Definition~3.5]{MR0279808}, the simplicial version of
 the infinite symmetric product $SP^\infty X$ (see also \cite{MR0097062}). We need similar functors
 that detect the property of being a polyGEM.
 As a first attempt one might try to consider the functors $\z_k X$
 in  the Bousfield-Kan completion tower with respect to the integral homology
\cite{MR51:1825}. Although the spaces in this tower are polyGEMs, they are very special polyGEMs, so called flat~\cite{MR1953240}. Thus instead of general polyGEMs, the functors $\z_k X$  detect only flat polyGEMs. Furthermore we do not  know
the needed cellularity properties of  $\z_k X$.

It turns out that for our purposes,
instead of the classical  Bousfield-Kan completion tower, one should  consider
 the modified Bousfield-Kan tower $\{ z_k X \}$ constructed by the second author in \cite{MR2002e:55012}. We do that
in Section~\ref{sec  injtwist}.
This tower has two key properties that are essential in our work. First, it detects polyGEMs.

\medskip
\noindent
\ref{polyretract}.~{\bf Proposition.}\
\emph{A space $W$ is a polyGEM if and only if it is a retract of $z_n W$ for  some~$n$.
}

\medskip
Second, the functors in the modified tower are cellular, a property that we are
unable to prove for the functors in the classical  Bousfield-Kan tower.

\medskip
\noindent
\ref{prop twistB-cellular}.~{\bf Proposition.}\
\emph{For all $k\geq 0 $  the co-augmented functor  $z_k$ is cellular. In particular $z_k X$ is $X$-cellular for any $X$.
}

\medskip

We define and study cellular functors in Section~\ref{sec Bousfield}. The functor $z_k$ being cellular
tells us much more than the fact that $z_k X$ is $X$-cellular. It says that the homotopy cofiber of the augmentation $X \to z_k X$
is $\Sigma X$-acyclic, so that $z_k X$ can be built from $X$ starting from $X$ and adding higher cells $\Sigma^i X$
for $i \geq 1$. This is how Bousfield's original ideas about his key lemma come into play.

\medskip

\noindent
{\bf Acknowledgements.} We would like to thank Bill Dwyer for helpful discussions and suggestions.




\section{Notation and set-up}
\label{sec:notation}
In this section we will recall basic definitions related to cellularity and acyclicity of spaces and state their fundamental properties. For more information about these notions we refer the reader to~\cite{MR98f:55010}.

The category of pointed simplicial sets with the standard simplicial model structure is denoted by $\text{Spaces}_{\ast}$.
Its objects are called pointed spaces or simply spaces, and morphisms are called maps. The space of maps between two pointed spaces $X$ and $Y$  is denoted by $\text{map}_{\ast}(X,Y)$.

 We say that a class  $\calc$ of pointed spaces is {closed under weak equivalences} if
when  $X$ belongs to $\calc$, then so does any pointed space  weakly equivalent to $X$.
We say that $\calc$ is closed under homotopy colimits if, for any functor $F\colon I\to \text{Spaces}_{\ast}$ whose values belong to
$\calc$, the  homotopy colimit $\text{hocolim}_{I} F$ in  $\pspaces$ also belongs to~$\calc$.
A class  of pointed spaces $\calc$ which  is closed both under weak equivalences  and homotopy colimits
is called {\bf cellular}.

Cellular classes were called
closed classes in \cite{MR98f:55010}, but we believe that the name ``cellular" is more descriptive in our context.
For a class $\calc$ to be cellular it is sufficient if it is   closed under weak equivalences,
arbitrary wedges and homotopy push-outs. This is so since all pointed homotopy colimits can be built by repeatedly
using these two special cases. Furthermore a  { retract} of a member of a cellular class also belongs to the cellular class, \cite[2.D.1.5]{MR98f:55010}, where
recall that $X$ is a  retract of $Y$ if there are maps
$f\colon X\to Y$ and $r\colon Y\to X'$ whose composition $rf$ is a weak equivalence.

The symbol $\calc(A)$ denotes the smallest cellular class in $\text{Spaces}_{\ast}$ containing a given  space $A$. If $X$
belongs to $\calc(A)$, then we write  $X\gg A$ and say that $X$ is $A$-cellular or that $A$ builds $X$.
For example, $\calc(S^0)$  consists of all pointed spaces and $\calc(S^n)$ of all $(n-1)$-connected pointed spaces.

A weaker notion than cellularity is given by acyclicity (see \cite{MR97i:55023} for the origin of the terminology). For a
map $f \colon X \to Y$ of pointed spaces,  $\text{Fib}(f)$ denotes the homotopy fiber of $f$ over the base point.
A cellular class $\calc$ is called  {\bf acyclic}  if, for any map $f\colon X\to Y$ such that $Y$ and $\text{Fib}(f)$
belong to $\calc$,  the space   $X$   belongs to~$\calc$.
We also say that $\calc$ is {closed under extensions by fibrations}.

Given a pointed  space $A$, the symbol $\overline{\calc(A)}$
denotes the smallest acyclic class in  $\text{Spaces}_{\ast}$ containing~$A$. If a space $X$ belongs to $\overline{\calc(A)}$, then we write  $X> A$ and say
that $X$ is $A$-acyclic. There is an obvious inclusion $\calc(A)\subset \overline{\calc(A)}$ which in general is proper.
For example, if $p$ is a prime number and $G$  a finite $p$-group, $K(G,1)$ is always $K({\z}/p,1)$-acyclic; however,
$K(G,1)$ is $K({\z}/p,1)$-cellular if and only if $G$ is generated by elements of order $p$, see \cite[Section 4]{Ramon} for details.

In \cite{MR97i:55023}, the first author proved that cellularity can be detected by means of a universal property:

\begin{thm} \label{thm universal cell}
A pointed space $X$ is $A$-cellular if and only if, for any  map $f$  between fibrant spaces such that $\text{\rm map}_{\ast}(A,f)$
is a weak equivalence, then $\text{\rm map}_{\ast}(X,f)$ is also a weak equivalence.
\end{thm}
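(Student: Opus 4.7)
The plan is to prove the two implications separately, with the converse being the substantive direction.

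For the forward direction (necessity), I would fix a map $f\colon Y \to Y'$ between fibrant spaces such that $\text{map}_{\ast}(A,f)$ is a weak equivalence, and introduce the class $\mathcal{D}_f$ of pointed spaces $Z$ for which $\text{map}_{\ast}(Z,f)$ is a weak equivalence. The aim is to show that $\mathcal{D}_f$ is a cellular class containing $A$; the minimality of $\mathcal{C}(A)$ will then force $X \in \mathcal{D}_f$ whenever $X\gg A$. Containment $A \in \mathcal{D}_f$ is the standing hypothesis on $f$, and closure under weak equivalences is immediate since mapping into a fibrant target is homotopy invariant in the source. Closure under pointed homotopy colimits reduces, via the standard identification
$$\text{map}_{\ast}\bigl(\text{hocolim}_I F,\, Y\bigr) \simeq \text{holim}_I\, \text{map}_{\ast}\bigl(F(-),\, Y\bigr)$$
valid for $Y$ fibrant, to the fact that homotopy limits preserve pointwise weak equivalences between diagrams of fibrant spaces.

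For the converse (sufficiency), assume $X$ satisfies the mapping-space criterion. I would invoke the prior existence of the $A$-cellular approximation, yielding a map $c\colon \text{cell}_A X \to X$ whose source is $A$-cellular and which induces a weak equivalence $\text{map}_{\ast}(A,c)$; after fibrant replacement at both ends one may assume $c$ is a map between fibrant spaces without altering the relevant mapping spaces. Applying the hypothesis on $X$ with $f = c$ gives that $\text{map}_{\ast}(X,c)$ is a weak equivalence; taking $\pi_0$, the identity of $X$ then has a preimage $s\colon X \to \text{cell}_A X$ satisfying $c \circ s \simeq \text{id}_X$. This exhibits $X$ as a retract of $\text{cell}_A X$ in the sense recalled earlier, and closure of $\mathcal{C}(A)$ under retracts (the cited 2.D.1.5 of \cite{MR98f:55010}) concludes $X \gg A$.

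The main obstacle is entirely confined to the converse: it rests on having the cellular approximation functor $\text{cell}_A$ already in place, with both of its defining properties (source is $A$-cellular, augmentation is a $\text{map}_{\ast}(A,-)$-equivalence). Producing this functor is itself nontrivial work, typically carried out by a transfinite small-object-style attachment of cells $\Sigma^i A$, verifying both properties along the way. Once that construction is granted, the bridge from the abstract mapping-space criterion back to an actual cellular building of $X$ is short, but without it the converse has no starting point.
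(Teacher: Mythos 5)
Your argument is correct and is essentially the standard proof of this characterization (the paper itself only cites \cite{MR97i:55023} for it): the forward direction by showing the class of spaces $Z$ with $\text{map}_{\ast}(Z,f)$ a weak equivalence is cellular and contains $A$, and the converse by applying the criterion to the augmentation $c_{A,X}$ to exhibit $X$ as a retract of $\text{cell}_A X$. Both steps, including the reliance on the prior construction of $\text{cell}_A$ with its two defining properties, match the cited treatment.
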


The present paper deals with possible values of the  $A$-cellular approximation
or $A$-cellular cover of spaces. The existence and
basic properties of these cellular cover functors $\text{cell}_A$ are guaranteed by the following result proved in
\cite[Section 2]{MR98f:55010}, see also \cite{MR97i:55023}.

\begin{thm}\label{thm excellul}
Let $A$ be a pointed space.
 \begin{enumerate}
   \item[(A)] There is a   natural fibration  $c_{A,X}\colon\text{\rm cell}_AX\twoheadrightarrow X$ in $\text{\rm Spaces}_{\ast}$
  such that:
  \begin{itemize}
       \item $\text{\rm cell}_A$ preserves  weak equivalences;
       \item$ \text{\rm  cell}_AX$ is $A$-cellular;
       \item  the map $\text{\rm map}_{\ast}(A,c_{A,X})$ is a weak equivalence.
   \end{itemize}
  \item[(B)] A pointed space $X$ is $A$-cellular if and only if the map  $c_{A,X}\colon\text{\rm cell}_AX\twoheadrightarrow X$  is a weak equivalence.
\end{enumerate}
\end{thm}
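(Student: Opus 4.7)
The plan is to establish part (A) by the standard small-object argument, dual to Bousfield's localization construction, and then deduce part (B) from the universal property recorded in Theorem~\ref{thm universal cell}.

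For part (A), I would construct $\text{cell}_A X$ as the terminal stage of a transfinite tower $X_0 \to X_1 \to \cdots \to X_\alpha \to \cdots \to X$, producing at the end a fibration $c_{A,X}\colon \text{cell}_A X \twoheadrightarrow X$ after applying the functorial fibrant replacement. Start with a map $X_0 \to X$, for instance $X_0 = \bigvee_{f\in\text{map}_{\ast}(A,X)} A$ with the obvious evaluation map. At each successor stage, measure the failure of $\text{map}_{\ast}(A,-)$ to be a weak equivalence on $X_\alpha \to X$: enumerate, for every~$n\ge 0$, all maps $\Sigma^n A \to X_\alpha$ that either lift non-uniquely to $X_\alpha$ along $X_\alpha\to X$ or that fail to lift, and build $X_{\alpha+1}$ from $X_\alpha$ as the homotopy pushout attaching the corresponding $\Sigma^n A$-cells. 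At limit ordinals, take the homotopy colimit. Since $A$ and its suspensions are small relative to the filtered colimits of spaces, the process stabilizes at a sufficiently large ordinal (Bousfield's cardinality/smallness argument), yielding the desired functorial $\text{cell}_A X$.

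The three bulleted properties then follow almost from the construction. Preservation of weak equivalences is automatic because the transfinite construction is a functor built from homotopy colimits of cofibrant replacements. The space $\text{cell}_A X$ is $A$-cellular because at every stage one only takes wedges of suspensions of $A$ and homotopy pushouts, operations that preserve the class $\calc(A)$. Finally, $\text{map}_{\ast}(A,c_{A,X})$ is a weak equivalence precisely because the stage-by-stage attachments kill every non-trivial element of $\pi_n$ of the homotopy fiber of $\text{map}_{\ast}(A,X_\alpha)\to\text{map}_{\ast}(A,X)$; this is the driving requirement of the transfinite construction. The hard part is the bookkeeping for the smallness/cardinality bound ensuring termination and functoriality, which is standard but delicate (see \cite[Section 2]{MR98f:55010}).

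For part (B), the ``if" direction is trivial since $\text{cell}_A X$ is $A$-cellular by (A). For the ``only if" direction, suppose $X$ is $A$-cellular. By Theorem~\ref{thm universal cell} applied to $c_{A,X}$, the fact that $\text{map}_{\ast}(A,c_{A,X})$ is a weak equivalence implies that $\text{map}_{\ast}(X,c_{A,X})$ is also a weak equivalence. This produces a homotopy section $s\colon X\to \text{cell}_AX$ with $c_{A,X}\circ s\simeq \text{id}_X$. Applying the same argument to the $A$-cellular space $\text{cell}_AX$ gives that $\text{map}_{\ast}(\text{cell}_AX,c_{A,X})$ is a weak equivalence, whence $s\circ c_{A,X}\simeq \text{id}_{\text{cell}_AX}$. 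Thus $c_{A,X}$ is a weak equivalence.

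The main obstacle is the termination of the small-object argument: one must check that an accessible ordinal suffices, and that the construction can be arranged to be simultaneously functorial in $X$ and to yield a fibration. Once this technical point is handled, the rest of the proof consists of properties that follow directly from the inductive structure and from Theorem~\ref{thm universal cell}.
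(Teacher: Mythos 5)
Your proposal is correct and follows essentially the standard route: the paper itself offers no proof of Theorem~\ref{thm excellul} but cites \cite[Section 2]{MR98f:55010} and \cite{MR97i:55023}, where exactly this transfinite small-object construction (attaching $\Sigma^nA$-cells to kill the homotopy fiber of $\text{map}_{\ast}(A,X_\alpha)\to\text{map}_{\ast}(A,X)$) is carried out, and your deduction of (B) from Theorem~\ref{thm universal cell} via the two-sided homotopy inverse is the standard one. The only blemishes are cosmetic: the phrase about maps $\Sigma^nA\to X_\alpha$ ``lifting along $X_\alpha\to X$'' should be stated as parametrizing elements of the homotopy fiber of the induced map of mapping spaces, and in (B) one should note that Theorem~\ref{thm universal cell} is applied after fibrant replacement.
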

The map $c_{A,X}\colon\text{\rm cell}_AX\twoheadrightarrow X$, given by Theorem~\ref{thm excellul},   is called the {\bf $A$-cellular cover} of $X$  and the functor
$\text{\rm cell}_A\colon \text{\rm Spaces}_\ast\to  \text{\rm Spaces}_\ast$ the $A$-{\bf cellularization}.

\begin{example}\label{ex:Postnikov}
Let $S^{n+1}$be an $(n+1)$-dimensional sphere. The $S^{n+1}$-cellular cover $c_{S^{n+1},X}\colon \text{\rm cell}_{S^{n+1}}X\to X$
coincides with the $n$-connected cover and fits into a fibration sequence:
\[
\text{\rm cell}_{S^{n+1}}X\xrightarrow{c_{S^{n+1},X}} X\xrightarrow{p_{n,X}} P_nX
\]
where $p_{n,X}\colon X\to P_nX$ is the $n$-th {\bf Postnikov section}.
We call a  space $X$ an $n$-{\bf Postnikov stage} if  the map $p_{n,X}\colon X\to P_nX$ is a  weak equivalence,
that is, if $\pi_iX=0$ for $i\geq n+1$. Thus a $0$-Postnikov stage is homotopically discrete,
and a connected $1$-Postnikov stage is an Eilenberg--Mac Lane space $K(\pi,1)$.
\end{example}

Similarly to the $n$-connected cover, for any $A$, the map $c_{A,X}\colon\text{\rm cell}_AX\twoheadrightarrow X$ is always a principal fibration, \cite[Corollary~20.7]{MR97i:55023}.

\section{Basic cellular inequalities}
\label{sec basic}
This section contains fundamental cellular and acyclic inequalities.
The idea is to use these inequalities as basic moves to obtain more involved ones.
Although  our basic result is an inequality $P_n X\gg X$ for a nilpotent  $P_nX,$ we will use on the way
many general inequalities that hold without any nilpotency condition.
The following results describe our basic dictionary of cellular inequalities, which in most cases admit direct proofs.

Our first statements, formulated for cellular inequalities $\gg$, hold also as stated for the weak inequality $>$.

\begin{prop}\label{prop basicmoves1}\hspace{1mm}
Let $A$ and $X$ be pointed spaces.

\begin{enumerate}
\item
 If  $X\gg A$, then,  for any space $E$,  $E\wedge X\gg  E\wedge A$.
 \item If $X$ is connected, then $\Omega X\gg  A$ if and only if $X\gg  S^1\wedge A$.
\item If  $X\gg A$ and $A$ is connected, then $\Omega X\gg  \Omega A$.
\item For any $X$ and $n\geq 1$, $\Omega^n(S^n\wedge X)\gg X$.
     \setcounter{tempcounter}{\value{enumi}}
     \end{enumerate}
\end{prop}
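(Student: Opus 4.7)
The plan is to establish the four parts in the order (1), (4), (3), (2). Part (1) is a direct closure argument, (4) uses James's theorem for the base case, (3) is the main technical obstacle, and (2) is then a consequence of the first three.

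For (1), I fix $E$ and $A$ and consider the class
\[
\mathcal{D}_E := \{Y \in \pspaces : E \wedge Y \gg E \wedge A\}.
\]
Since $E \wedge -$ is a left Quillen functor on pointed simplicial sets, it preserves pointed homotopy colimits. Hence $\mathcal{D}_E$ is closed under weak equivalences and under homotopy colimits, so it is cellular. As $A$ lies in $\mathcal{D}_E$ trivially, we conclude $\calc(A) \subseteq \mathcal{D}_E$, which proves (1).

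For (4), I first take $n = 1$ with $X$ connected. James's theorem identifies $\Omega\Sigma X$ with the reduced James construction $J(X) = \text{hocolim}_k J_k X$, where each $J_k X$ fits in a cofibre sequence $J_{k-1} X \hookrightarrow J_k X \to X^{\wedge k}$. Each smash power $X^{\wedge k}$ is $X$-cellular: applying (1) with $E = X$ to the trivial inequality $X^{\wedge(k-1)} \gg S^0$ gives $X^{\wedge k} = X \wedge X^{\wedge(k-1)} \gg X \wedge S^0 = X$. Since $\Omega\Sigma X$ is thereby built from $X$-cellular pieces via pushouts and telescopes, $\Omega\Sigma X \gg X$. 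The non-connected case is handled by a mild adjustment involving group completion. For $n \geq 2$, I would induct on $n$ using (3): writing $\Omega^n \Sigma^n X = \Omega(\Omega^{n-1}\Sigma^{n-1}\Sigma X)$, the induction hypothesis gives $\Omega^{n-1}\Sigma^{n-1}\Sigma X \gg \Sigma X$, and (3) plus the base case combine to yield $\Omega^n \Sigma^n X \gg \Omega \Sigma X \gg X$.

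The main obstacle is (3). Given $X \gg A$ with $A$ connected, the class $\{Y : \Omega Y \gg \Omega A\}$ is not manifestly cellular because $\Omega$ does not preserve homotopy colimits, so the standard closure argument does not apply. My strategy is to invoke Theorem~\ref{thm universal cell} directly: for any $g\colon Y \to Z$ between fibrant spaces with $\text{\rm map}_{\ast}(\Omega A, g)$ a weak equivalence, I need $\text{\rm map}_{\ast}(\Omega X, g)$ to also be a weak equivalence. For connected $A$, the identification $A \simeq B\Omega A$ together with a bar-construction / delooping argument allows me to transfer the hypothesis on $g$ to an analogous hypothesis on $A$, to which the assumption $X \gg A$ applies via Theorem~\ref{thm universal cell}. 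Making this transfer precise is the technical crux of the argument.

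Finally, (2) follows from (1), (3), and (4). For the forward direction, if $X$ is connected and $X \gg \Sigma A$, then (3) yields $\Omega X \gg \Omega \Sigma A$ (applicable since $\Sigma A$ is connected), while (4) with $n = 1$ yields $\Omega \Sigma A \gg A$; transitivity then gives $\Omega X \gg A$. For the converse, if $\Omega X \gg A$ and $X$ is connected, then (1) applied with $E = S^1$ yields $\Sigma \Omega X \gg \Sigma A$; combined with the fact that every connected space $X$ satisfies $X \gg \Sigma\Omega X$ --- provable via the Ganea tower, whose successive stages are built inductively from iterated smashes of $\Omega X$ and whose homotopy colimit recovers $X$ --- transitivity yields $X \gg \Sigma A$.
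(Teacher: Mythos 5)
Your part (1) is fine (the paper simply cites \cite[Theorem 4.3]{MR98e:55014} for it, but the closure argument you give --- $E\wedge -$ preserves weak equivalences and pointed homotopy colimits, so $\{Y: E\wedge Y\gg E\wedge A\}$ is a cellular class containing $A$ --- is exactly the standard proof). The problem is the overall logical architecture: you route everything through part (3), and part (3) is precisely the statement you do not prove. You acknowledge that the class $\{Y:\Omega Y\gg\Omega A\}$ is not cellular because $\Omega$ fails to commute with homotopy colimits, and you propose to fix this by a ``bar-construction / delooping transfer'' whose details you defer as ``the technical crux.'' That deferred step is not a routine verification; it is essentially the content of Chach\'olski's desuspension/delooping theorem (\cite[Theorem 10.8]{MR97i:55023}, from the paper whose very title is ``Desuspending and delooping cellular inequalities''), which requires Blakers--Massey-type input and is the only genuinely deep ingredient in this proposition. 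Since your proofs of (2) (forward direction) and of (4) for $n\geq 2$ both invoke (3), the gap propagates through the whole argument. Your converse direction of (2) also leans on $X\gg\Sigma\Omega X$ for connected $X$, which is itself an instance of (2) and which you again only sketch (via the Ganea tower); that sketch is repairable but is once more an attempt to reprove part of the hard theorem.

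The paper inverts your dependency graph: it takes (1) and (2) as the inputs quoted from the literature ((2) being \cite[Theorem 10.8]{MR97i:55023}) and observes that (3) and (4) are then formal. For instance, (3) falls out of (2) in two lines: since $A$ is connected and trivially $\Omega A\gg\Omega A$, statement (2) gives $A\gg S^1\wedge\Omega A$, hence $X\gg A\gg S^1\wedge\Omega A$, and applying (2) to the connected space $X$ in the other direction yields $\Omega X\gg\Omega A$. Similarly (4) for $n=1$ is the instance $\Omega(S^1\wedge X)\gg X\Leftrightarrow S^1\wedge X\gg S^1\wedge X$ of (2), and the induction on $n$ then uses (3) exactly as you describe. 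So your James-construction argument for $\Omega\Sigma X\gg X$ is a legitimate alternative for that one base case, but unless you are prepared to actually prove (2) (or (3)) rather than cite it, the proposal has a genuine hole where the theorem's real content lives.
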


\begin{proof}
The first cellular inequality is the content of~\cite[Theorem 4.3]{MR98e:55014}, while the acyclic inequality follows from the cellular one
and the fact that, for any space $A$, there is a space $B$ such that $\overline{{\calc}(A)}=\calc(B)$, see~\cite[Corollary 6.2]{MR2073317}.
The cellular inequality (2) is proved  in~\cite[Theorem 10.8]{MR97i:55023} and the acyclic one is~\cite[Theorem 18.5]{MR97i:55023}.
The inequalities (3) and (4) are easy consequences of (1) and  (2).
\end{proof}

Our second set of inequalities concerns homotopy fibers and cofibers of a map $f \colon X \to Y$. The last one tells us that
the ``fiber of the cofiber" is close to the space $X$ from a cellular point of view.

\begin{prop}\label{prop basicmoves2}
Let   $f\colon X\to Y$ be a map to a connected pointed space $Y$.
\begin{enumerate}
\setcounter{enumi}{\value{tempcounter}}
\item  $\text{\rm Cof}(f)\gg S^1\wedge \text{\rm Fib}(f)$.
\item For any $E$,   $\text{\rm Fib}(E\wedge f)\gg E\wedge \text{\rm Fib}(f)$.
 \item If $\alpha\colon Y\to\text{\rm Cof}(f)$ is a homotopy cofiber of $f$, then $\text{\rm Fib}(\alpha)\gg X$.
     \setcounter{tempcounter}{\value{enumi}}
 \setcounter{tempcounter}{\value{enumi}}
\end{enumerate}
\end{prop}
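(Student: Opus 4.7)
Write $F:=\text{Fib}(f)$. The three statements are all Ganea-type comparisons between homotopy (co)fibers, so I would treat them uniformly using the Ganea filtration of a map together with Proposition~\ref{prop basicmoves1}. A natural order is (6), (7), (5), because the ``fibers of smashed maps''  analysis underlying (6) feeds directly into both (7) and (5), and (5) has the subtlest reduction.

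For (6), first replace $f$ by a Serre fibration so that $F$ is the strict fiber of $X\twoheadrightarrow Y$. The null composition $E\wedge F\to E\wedge X\to E\wedge Y$ produces a natural comparison $E\wedge F\to\text{Fib}(E\wedge f)$. I would then apply the Ganea construction to $E\wedge f$: the resulting tower $G_k(E\wedge f)\to E\wedge Y$ approximates $E\wedge X$, and taking fibers over $E\wedge Y$ exhibits $\text{Fib}(E\wedge f)$ as a homotopy colimit of a filtration starting from $E\wedge F$ whose successive subquotients are smash products of $E\wedge F$ with iterated copies of $\Omega(E\wedge Y)$. By Proposition~\ref{prop basicmoves1}(1), each such smash product is $(E\wedge F)$-cellular, and closure of $\calc(E\wedge F)$ under homotopy colimits gives (6). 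For (7), the argument is parallel: the null composition $X\xrightarrow{f} Y\xrightarrow{\alpha}\text{Cof}(f)$ lifts to a natural map $\iota\colon X\to\text{Fib}(\alpha)$, and a classical James--Ganea decomposition of the homotopy fiber of the cofibration $Y\hookrightarrow Y\cup_{f}CX$ realizes $\text{Fib}(\alpha)$ as a homotopy colimit of a tower starting at $X$ whose subquotients are smash products $X\wedge(\Omega\text{Cof}(f))^{\wedge k}$, each $X$-cellular by Proposition~\ref{prop basicmoves1}(1). Hence $\text{Fib}(\alpha)\gg X$.

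For (5), I would first reduce to the case where $Y$, and hence $\text{Cof}(f)$, is connected. Then Proposition~\ref{prop basicmoves1}(2) translates $\text{Cof}(f)\gg S^1\wedge F$ into the equivalent $\Omega\text{Cof}(f)\gg F$. The two fibrations $F\to X\to Y$ and $\Omega\text{Cof}(f)\to\text{Fib}(\alpha)\to Y$ are comparable via the natural lift $\iota$ from (7), which induces a natural map $F\to\Omega\text{Cof}(f)$ on fibers; a Ganea-style filtration of $\Omega\text{Cof}(f)$ starting at $F$, with subquotients smashes of $F$ with iterated loops, exhibits $\Omega\text{Cof}(f)$ as $F$-cellular by repeated application of Proposition~\ref{prop basicmoves1}(1), in the same spirit as (6). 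The main obstacle is expected to be in (6) itself: smashing with an arbitrary $E$ does not preserve homotopy pullbacks, so one must verify with care that the subquotients of the Ganea tower of $E\wedge f$ really do admit the advertised smash-product description; once this is in place, the identifications needed for (7) and (5) follow by analogous, if slightly more intricate, bookkeeping.
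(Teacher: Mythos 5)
The paper itself disposes of all three inequalities by citation: (5) is \cite[Proposition~10.5]{MR97i:55023}, (6) is deduced from the case $E=S^1$ of \cite{MR96g:55011} by induction on a cell decomposition of $E$, and (7) is \cite[Corollary~9.A.10]{MR98f:55010}. Your plan to reprove them by Ganea filtrations is the right spirit --- this is essentially how (5) is proved in the reference --- but as written it has a genuine gap exactly where you place ``the main obstacle'', and the obstacle is not a bookkeeping issue. The Ganea tower of $E\wedge f\colon E\wedge X\to E\wedge Y$ does not start at $E\wedge F$: its zeroth fiber is $\mathrm{Fib}(E\wedge f)$ itself, the unknown you are trying to control, and Ganea's theorem only describes how the \emph{later} fibers are built from that unknown one (as joins with $\Omega(E\wedge Y)$, incidentally, not plain smashes). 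So the advertised filtration of $\mathrm{Fib}(E\wedge f)$ beginning at $E\wedge F$ is not produced by this construction; the failure of the comparison map $E\wedge F\to\mathrm{Fib}(E\wedge f)$ to be an equivalence is precisely the content of (6). This statement is the hardest of the three even for $E=S^1$, where it is a theorem of Dror Farjoun requiring a real argument; the workable strategy is the one the paper cites: establish the circle case, then induct over the cells of $E$ using that $-\wedge X$ preserves homotopy push-outs and that spheres are iterated suspensions.

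The other two items have the same shape of gap, though there the conclusion is reachable by nearby routes. For (7), the ``classical James--Ganea decomposition'' of $\mathrm{Fib}\bigl(Y\to Y\cup_f CX\bigr)$ with subquotients $X\wedge(\Omega\,\mathrm{Cof}(f))^{\wedge k}$ is not a result you can invoke for an arbitrary $f$ (such splittings are metastable-range statements); a short correct argument applies the fibers-over-a-push-out inequality (12) (that is, \cite[Theorem~3.4]{MR98e:55014}) to the defining square of $\mathrm{Cof}(f)$, whose other leg $X\to CX\simeq\ast$ has homotopy fiber $X$, with $CX$ and $\mathrm{Cof}(f)$ connected; this gives $\mathrm{Fib}(\alpha)\gg X$ at once. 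For (5), the reduction to $\Omega\,\mathrm{Cof}(f)\gg F$ via Proposition~\ref{prop basicmoves1}(2) is legitimate since $\mathrm{Cof}(f)$ is connected, but the filtration of the \emph{loop space} of the cofiber starting at $F$ is again only asserted. A filtration that does exist lives on the cofiber side: replacing $f$ by a fibration and setting $X_{n+1}=X_n\cup_{F_n}CF_n$ over $Y$, one has $X_1/X\simeq\Sigma F$, each $X_{n+1}/X$ is the cofiber of $F_n\to X_n/X$ with $F_n\simeq F\ast(\Omega Y)^{\ast n}\gg\Sigma F$ for $n\geq 1$ by Proposition~\ref{prop basicmoves1}(1), and $\mathrm{hocolim}\,X_n\to Y$ is a weak equivalence because the connectivity of the joins $F_n$ grows; closure of cellular classes under push-outs and telescopes then yields $\mathrm{Cof}(f)\gg\Sigma F$. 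In short, the architecture is plausible, but each of the three filtrations you lean on is the whole difficulty, and for (6) the specific tower you propose cannot supply it.
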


\begin{proof}
Inequality (5) is~\cite[Proposition 10.5]{MR97i:55023} while
(6) follows from the case of a circle \cite{MR96g:55011}, and induction on a cell decomposition.
Finally (7) is~\cite[Corollary~9.A.10]{MR98f:55010} (see also~\cite[Proposition 4.5.(4)]{MR97i:55023}).
\end{proof}

In fibration and cofibration sequences we can sometimes relate the cellularity of the spaces involved in the sequence.
The main difficulty for a fibration sequence $F\to E\to B$ is that in general we cannot
extract information about the total space of the type $E\gg A$ knowing that $B,F\gg A$.

\begin{prop}\label{prop basicmoves3}
Let   $Z\to X\to Y$ be either a cofibration or a fibration sequence.
\begin{enumerate}
\setcounter{enumi}{\value{tempcounter}}
\item If $Z>A$ and $Y>A$, then $X>A$.
\item If $Z\gg A$ and $Y> S^1\wedge A$, then $X\gg A$.
 \item  If $Z> A$ and $X\gg A$, then $Y\gg A$.
   \setcounter{tempcounter}{\value{enumi}}
 \end{enumerate}
\end{prop}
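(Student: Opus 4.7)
The six cases of the proposition (three statements, each for fibration and cofibration sequences) can all be reduced to a single substantive claim: statement (9) for a fibration sequence. Statement (8) for a fibration is precisely the definition of an acyclic class being closed under extensions by fibrations. For (8) in the cofibration case I apply Proposition~\ref{prop basicmoves2}(7) to the map $Z\to X$: its homotopy fiber satisfies $\text{Fib}(X\to Y)\gg Z$, hence $\text{Fib}(X\to Y)>A$ since $\overline{\calc(A)}$ is closed under homotopy colimits. The fibration $\text{Fib}(X\to Y)\to X\to Y$ then has both ends in $\overline{\calc(A)}$, and the fibration case of (8) gives $X>A$.

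The same application of Proposition~\ref{prop basicmoves2}(7) reduces (9) in the cofibration case to (9) in the fibration case, since the fiber now sits in $\calc(A)$. For (10) in the fibration case, I apply Proposition~\ref{prop basicmoves2}(5) to $X\to Y$: its cofiber $C$ satisfies $C\gg \Sigma Z$, and $\Sigma Z>\Sigma A$ by Proposition~\ref{prop basicmoves1}(1) with $E=S^1$, so $C>\Sigma A$. The cofibration $X\to Y\to C$ then meets the hypotheses of (9) in the cofibration case and yields $Y\gg A$. The cofibration case of (10) reduces to its fibration case by the same use of Proposition~\ref{prop basicmoves2}(7) as in (8).

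The core remaining claim is (9) for a fibration $Z\to X\to Y$ with $Z\gg A$ and $Y>\Sigma A$. My plan is to introduce the class
\[
\mathcal{D}=\bigl\{W\in\pspaces : \text{every fibration } Z'\to X'\to W \text{ with } Z'\gg A \text{ has } X'\gg A\bigr\}
\]
and verify that $\mathcal{D}$ is closed under weak equivalences, homotopy colimits, and extensions by fibrations, and that $\Sigma A\in\mathcal{D}$. This will force $\overline{\calc(\Sigma A)}\subseteq\mathcal{D}$ and complete the proof. Closure under weak equivalences is immediate; closure under homotopy colimits uses that pullbacks of fibrations commute with hocolims, so $X'$ can be written as a hocolim of $A$-cellular pieces over a hocolim presentation of $W$; closure under extensions by fibrations follows by composing two fibrations and applying the defining property of $\mathcal{D}$ twice.

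The main obstacle is the base case $\Sigma A\in\mathcal{D}$. Using the homotopy pushout presentation $\Sigma A\simeq \text{hocolim}(*\leftarrow A\to *)$ and pulling back a fibration $Z'\to X'\to\Sigma A$, the total space $X'$ is expressed as a pointed homotopy pushout of two copies of $Z'$ along a space constructed from $Z'$ and $A$ by smash products. Each term in this pushout is $A$-cellular, using $Z'\gg A$ together with Proposition~\ref{prop basicmoves1}(1), so $X'\gg A$ by closure of $\calc(A)$ under homotopy colimits.
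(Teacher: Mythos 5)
Your reductions are exactly the ones the paper uses: (8) for a fibration sequence is the definition of an acyclic class, Proposition~\ref{prop basicmoves2}.(7) transports each cofibration statement to the corresponding fibration statement, and (10) follows from (5) together with (9). The entire weight therefore rests on (9) for a fibration sequence, and this is precisely the point where the paper proves nothing: it cites \cite[Corollary 20.2]{MR97i:55023}. Your decision to reprove that result is legitimate, and your strategy --- show that the class $\mathcal D$ of admissible base spaces is acyclic and contains $S^1\wedge A$ --- is the standard one; but as written the argument has a real gap in the base case.

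When you pull the fibration $Z'\to X'\to S^1\wedge A$ back over the pushout presentation $\ast\leftarrow A\to\ast$, the two end pieces are indeed equivalent to $Z'$, but the middle piece is the homotopy pullback of $A\to S^1\wedge A\leftarrow X'$, which (because $A\to S^1\wedge A$ is null-homotopic) is the \emph{product} $A\times Z'$, not a space ``constructed from $Z'$ and $A$ by smash products''. Proposition~\ref{prop basicmoves1}.(1) controls smashes, $E\wedge Z'\gg E\wedge A$, and says nothing about $A\times Z'$. To see that $A\times Z'$ is $A$-cellular you need closure of cellular classes under finite products (see \cite[Section 2.D]{MR98f:55010}); trying instead to use the cofibration sequence $A\vee Z'\to A\times Z'\to A\wedge Z'$ sends you straight back to statement (9), i.e.\ into a circle. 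A secondary issue is the closure of $\mathcal D$ under homotopy colimits: ``pullbacks of fibrations commute with hocolims'' is the right idea, but you must deal with wedges (where the restricted fibrations are glued along copies of the fibre rather than along points) and with the discrepancy between pointed and unpointed homotopy colimits. All of this can be repaired, but the repair is essentially the content of the cited Corollary 20.2, not a consequence of Proposition~\ref{prop basicmoves1}.(1).
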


\begin{proof}
For a fibration sequence inequality (8) holds by definition of the relation~$>$. The cofibration sequence case follows from (7)
and the fibration sequence case. Inequality (9) is~\cite[Corollary 20.2]{MR97i:55023} for a fibration sequence, and the cofibration
case follows again from (7) and the fibration sequence case. Finally, inequality (10) may be deduced from statements (5) and (9).
\end{proof}

The following inequalities, that put in relation fiber and cofiber from the cellular point of view, will be especially relevant in the rest of the paper.

\begin{prop}\label{prop basicmoves4} \hspace{1mm}
\begin{enumerate}
\setcounter{enumi}{\value{tempcounter}}
    \item Let $X$ be a connected pointed space and $n \geq 1$. Let  $e_{n,X}\colon X\to\Omega^n(S^n\wedge X)$ be the adjoint to
  $\text{\rm id}\colon S^n\wedge X\to S^n\wedge X$.
  Then $\text{\rm Fib}(e_{n, X})\gg S^1\wedge \Omega X\wedge \Omega X$ and $\text{\rm Cof}(e_{n, X})\gg X\wedge X$.
   \setcounter{tempcounter}{\value{enumi}}
 \end{enumerate}
\end{prop}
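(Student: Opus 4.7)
The plan is to prove both inequalities by a simultaneous induction on $n$, using the factorization
\[
e_{n,X} \colon X \xrightarrow{e_{1,X}} \Omega\Sigma X \xrightarrow{\Omega e_{n-1,\Sigma X}} \Omega^n\Sigma^n X
\]
to reduce each inductive step to the case $n=1$. The base case $n=1$ will be handled via the classical James construction $\Omega\Sigma X \simeq JX$ (valid for connected $X$), where $JX = \text{hocolim}\, J_kX$ is filtered by $J_kX$ with $J_1X = X$ and $J_kX/J_{k-1}X \simeq X^{\wedge k}$.

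For the cofiber at $n=1$, the James filtration identifies $\text{\rm Cof}(e_{1,X}) \simeq JX/X$ with a homotopy colimit built by iterated cofibrations from the smashes $X^{\wedge l}$ for $l\geq 2$. Writing $X^{\wedge l} = X^{\wedge(l-2)} \wedge (X\wedge X)$ and applying Proposition~\ref{prop basicmoves1}(1) to the trivial inequality $X^{\wedge(l-2)} \gg S^0$ gives $X^{\wedge l} \gg X\wedge X$, so closure of cellular classes under homotopy colimits yields $\text{\rm Cof}(e_{1,X}) \gg X\wedge X$. For the fiber at $n=1$, I would use the James--Hopf map $H\colon \Omega\Sigma X \to \Omega\Sigma(X\wedge X)$ (whose composition with $e_{1,X}$ is null-homotopic) together with Ganea's theorem to relate $\text{\rm Fib}(e_{1,X})$ cellularly to $\Omega^2\Sigma(X\wedge X)$. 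The inequality $\Omega^2\Sigma(X\wedge X) \gg S^1 \wedge \Omega X\wedge \Omega X$ then follows from Propositions~\ref{prop basicmoves1}(3), (4) (giving $\Omega^2\Sigma(X\wedge X) \gg \Omega(X\wedge X)$) combined with Proposition~\ref{prop basicmoves1}(2): the trivial $\Omega X \gg \Omega X$ yields $X \gg S^1\wedge\Omega X$ by connectivity of $X$, whence Proposition~\ref{prop basicmoves1}(1) gives $X\wedge X \gg S^2\wedge\Omega X\wedge\Omega X$, and Proposition~\ref{prop basicmoves1}(2) transfers this back to $\Omega(X\wedge X) \gg S^1\wedge\Omega X\wedge\Omega X$.

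For the inductive step on the cofiber, the factorization produces a cofibration sequence
\[
\text{\rm Cof}(e_{1,X}) \to \text{\rm Cof}(e_{n,X}) \to \text{\rm Cof}(\Omega e_{n-1,\Sigma X}),
\]
whose first term is controlled by the base case. Proposition~\ref{prop basicmoves2}(5) bounds the third term by $\text{\rm Cof}(\Omega e_{n-1,\Sigma X}) \gg S^1 \wedge \Omega\,\text{\rm Fib}(e_{n-1,\Sigma X})$; the inductive fiber hypothesis applied to the connected space $\Sigma X$, together with Propositions~\ref{prop basicmoves1}(3), (4) and the cellular inequality $\Omega\Sigma X \gg X$ (provable by induction over the James filtration using Proposition~\ref{prop basicmoves3}(9)), reduces this further to $S^1\wedge X\wedge X$. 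Proposition~\ref{prop basicmoves3}(9) applied to the cofibration sequence then concludes $\text{\rm Cof}(e_{n,X}) \gg X\wedge X$. The fiber inductive step is entirely analogous, using the corresponding fiber sequence and Proposition~\ref{prop basicmoves2}(6) in place of (5).

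The hard part will be the fiber base case $n=1$: since the James--Hopf map $H$ is not in general a fibration, identifying $\text{\rm Fib}(e_{1,X})$ with $\Omega^2\Sigma(X\wedge X)$ at the cellular level (rather than merely stably or after $2$-localization) requires a careful application of Ganea's connectivity theorem, and constitutes the technical heart of the proof.
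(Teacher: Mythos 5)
Your inductive skeleton is sound and is essentially the one the paper uses, except that you peel $e_{1,X}$ off the bottom of the composite ($e_{n,X}=\Omega e_{n-1,\Sigma X}\circ e_{1,X}$) whereas the paper peels it off the top ($e_{n,X}=\Omega^{n-1}(e_{1,S^{n-1}\wedge X})\circ e_{n-1,X}$); both factorizations work and lead to the same kind of bookkeeping with Propositions~\ref{prop basicmoves1}--\ref{prop basicmoves3}. Your cofiber base case via the James filtration is also essentially the argument the paper alludes to, with one caveat: bare ``closure under homotopy colimits'' does not let you pass from the subquotients $X^{\wedge l}$ to $JX/X$, since a cofibration sequence with both ends in a cellular class need not have its middle term in it. You need the induction over the filtration stages to run through Proposition~\ref{prop basicmoves3}.(9), using $X^{\wedge l}> S^1\wedge X\wedge X$ for $l\geq 3$ (which follows from $X\gg S^1\wedge \Omega X$ for connected $X$). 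That is a repairable omission.

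The genuine gap is exactly where you place it: the fiber base case $\text{\rm Fib}(e_{1,X})\gg S^1\wedge\Omega X\wedge\Omega X$, and the James--Hopf route you sketch does not close up. The sequence $X\to\Omega\Sigma X\xrightarrow{H}\Omega\Sigma(X\wedge X)$ is not a fibration sequence in general (only in a metastable range, or after suitable localization), so the nullity of $H\circ e_{1,X}$ produces at best a comparison map between $\text{\rm Fib}(e_{1,X})$ and $\Omega^2\Sigma(X\wedge X)$; to convert such a map into a cellular inequality in the direction you need --- that $\text{\rm Fib}(e_{1,X})$ is \emph{built from} $S^1\wedge\Omega X\wedge\Omega X$ --- you must control the fiber or cofiber of that comparison map, which is a problem of the same order of difficulty as the one you started with. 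Ganea's theorem computes the fiber of a cofibration of the special form $Y\to Y\cup CA$, not of a James--Hopf map, and does not supply this control. This base case is precisely \cite[Theorem~7.2]{MR98i:55013}, one of the main theorems of that paper on desuspending cellular inequalities; its proof is a delicate analysis of the James construction and requires far more than the reductions available in Section~\ref{sec basic}. The paper under review simply cites it. So your proposal is complete only modulo importing that theorem, and the replacement argument you offer for it would fail as stated.
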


\begin{proof}
If  $n=1$, then the statement for the homotopy fiber is~\cite[Theorem 7.2]{MR98i:55013}.
We proceed by induction on $n$. Let $n>1$. The map $e_{n, X}$ factors as the composition:
\[
\xymatrix{
X \ar@/_17pt/[rrrrr]_{e_{n, X}} \rrto^-{e_{n-1,X}} &&  \Omega^{n-1}(S^{n-1}\wedge X) \ar[rrr]^-{\Omega^{n-1}(e_{1,S^{n-1}\wedge X})} & &&
\Omega^{n}(S^{n}\wedge X)
}\]
which  leads to a fibration  sequence:
\[
\text{Fib}(e_{n-1,X})\ra \text{Fib}(e_{n,X})\ra \text{Fib}(\Omega^{n-1}(e_{1,S^{n-1}\wedge X}))
\]
and a cofibration sequence:
\[
\text{Cof}(e_{n-1,X})\ra \text{Cof}(e_{n,X})\ra \text{Cof}(\Omega^{n-1}(e_{1,S^{n-1}\wedge X}))
\]
Let us analyze $\text{Fib}\left(\Omega^{n-1}(e_{1,S^{n-1}\wedge X})\right) \simeq
\Omega^{n-1}\left(\text{Fib}(e_{1,S^{n-1}\wedge X})\right)$. According to   statements (2) and (3) the following
cellular relation holds:
\[
\Omega^{n-1}\left(S^1\wedge \Omega(S^{n-1}\wedge X)\wedge   \Omega(S^{n-1}\wedge X)\right)\gg
  \Omega^{n-1}\left( S^1\wedge S^{n-2}\wedge X\wedge S^{n-2}\wedge X  \right)
\]
 For this last space   $\Omega^{n-1}\left( S^{2n-3}\wedge X\wedge X\right)$,  by the same
argument, we have the following relation:
\[
\Omega^{n-1}\left( S^{2n-3}\wedge X\wedge X\right)\gg S^{n-2}\wedge X\wedge X\gg S^n\wedge  \Omega X\wedge  \Omega X\gg S^2\wedge \Omega X\wedge \Omega X
 \]
Thus $\text{\rm Fib}(e_{n, X})\gg S^1\wedge \Omega X\wedge \Omega X$ is a consequence of the inductive step and statement  (9). The case of the homotopy cofiber is an analogous induction based
on the proof of~\cite[Theorem 7.2]{MR98i:55013}, which analyzes the James construction.
\end{proof}

We arrive now at more delicate inequalities involving diagrams. Consider a homotopy push-out square:
\[\xymatrix{
A\rto^{f}\dto_g & B\dto^{h}\\
C\rto^k  & D
}\]
Let  $\beta\colon \text{\rm Fib}(g)\to \text{\rm Fib}(h)$
and $\gamma\colon A\to T:=\text{holim}(C\xrightarrow{k} D\xleftarrow{h} B)$
be the maps induced by the commutativity of this square.

\begin{thm}
\label{thm basicmoves}\hspace{1mm}
\begin{enumerate}
\setcounter{enumi}{\value{tempcounter}}
\item If $C$ and $D$ are connected, then $\text{\rm Fib}(h)\gg \text{\rm Fib}(g)$.
\item If $\text{\rm Fib}(k)$ is connected, then $\text{\rm Cof}(\beta)\gg S^1\wedge\text{\rm Fib}(g)\wedge \Omega \text{\rm Fib}(k)$.
\item  If $B$, $C$, $T$ and  $\text{\rm Fib}(\gamma)$ are connected, then
$\text{\rm Fib}(\gamma)>S^1\wedge \Omega \text{\rm Fib}(f)\wedge  \Omega \text{\rm Fib}(g)$.
     \setcounter{tempcounter}{\value{enumi}}
\end{enumerate}
\end{thm}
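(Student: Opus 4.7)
The plan is to identify $\text{\rm Fib}(\gamma)$ with $\text{\rm Fib}(\beta)$ for the map between the vertical fibers, bound its cofiber via inequality (12) and a symmetric use of (11), and then pass from the cofiber to the fiber via a Ganea-style decomposition.

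First I would observe that the projection $T\to C$ is obtained by pulling back $h\colon B\to D$ along $k\colon C\to D$, so it is a fibration with fiber $\text{\rm Fib}(h)$. Since $g$ equals the composition $A\xrightarrow{\gamma}T\to C$, the map $\gamma$ is a map of fibrations over $C$ and yields a fibration sequence
\[
\text{\rm Fib}(\gamma)\to\text{\rm Fib}(g)\xrightarrow{\beta}\text{\rm Fib}(h),
\]
whence $\text{\rm Fib}(\gamma)\simeq\text{\rm Fib}(\beta)$. Inequality (12) applied to $\beta$ gives $\text{\rm Cof}(\beta)\gg S^1\wedge\text{\rm Fib}(g)\wedge\Omega\text{\rm Fib}(k)$; the symmetric instance of (11), with horizontal and vertical maps swapped, provides $\text{\rm Fib}(k)\gg\text{\rm Fib}(f)$, whence $\Omega\text{\rm Fib}(k)\gg\Omega\text{\rm Fib}(f)$ by (3); and by (2), using connectivity of $\text{\rm Fib}(g)$ (which follows from the hypotheses), one has $\text{\rm Fib}(g)\gg S^1\wedge\Omega\text{\rm Fib}(g)$. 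Chaining these inequalities through (1) produces $\text{\rm Cof}(\beta)\gg S^2\wedge\Omega\text{\rm Fib}(f)\wedge\Omega\text{\rm Fib}(g)$, and one further application of (2) gives $\Omega\text{\rm Cof}(\beta)\gg S^1\wedge\Omega\text{\rm Fib}(f)\wedge\Omega\text{\rm Fib}(g)$.

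Next I would bridge $\Omega\text{\rm Cof}(\beta)$ to $\text{\rm Fib}(\beta)$ by factoring $\beta$ through the fiber of its cofiber map. Let $\alpha\colon\text{\rm Fib}(h)\to\text{\rm Cof}(\beta)$ and $F_\alpha=\text{\rm Fib}(\alpha)$, so that $\beta=\beta_2\beta_1$ with $\beta_1\colon\text{\rm Fib}(g)\to F_\alpha$ and $\beta_2\colon F_\alpha\to\text{\rm Fib}(h)$. Since $\text{\rm Fib}(\beta_2)\simeq\Omega\text{\rm Cof}(\beta)$, the long fibration sequence of a composition produces
\[
\text{\rm Fib}(\beta_1)\to\text{\rm Fib}(\beta)\to\Omega\text{\rm Cof}(\beta),
\]
whose base has already been bounded. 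Inequality (8) then reduces the theorem to proving the same acyclic bound for $\text{\rm Fib}(\beta_1)$.

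The hard part will be controlling $\text{\rm Fib}(\beta_1)$. Here the idea is a Ganea-style analysis of the cofibration $\text{\rm Fib}(g)\to\text{\rm Fib}(h)\to\text{\rm Cof}(\beta)$: by (7), $F_\alpha\gg\text{\rm Fib}(g)$, and the deviation of $\beta_1$ from an equivalence is acyclically governed by $\text{\rm Fib}(g)\wedge\Omega\text{\rm Cof}(\beta)$. This lets one conclude that $\text{\rm Fib}(\beta_1)$ is acyclically built from $\text{\rm Fib}(g)\wedge\Omega\text{\rm Cof}(\beta)$, and hence from $S^1\wedge\Omega\text{\rm Fib}(f)\wedge\Omega\text{\rm Fib}(g)$ by the estimates of step 2. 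Once this is in place, inequality (8) applied to the fibration sequence above yields $\text{\rm Fib}(\gamma)=\text{\rm Fib}(\beta)>S^1\wedge\Omega\text{\rm Fib}(f)\wedge\Omega\text{\rm Fib}(g)$, completing the proof. The main technical obstacle is making the Ganea-style comparison precise within the cellular-acyclic dictionary provided by the earlier inequalities of this section.
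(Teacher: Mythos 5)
There is a genuine gap at the decisive step of your argument for inequality (14). The reductions preceding it are fine: identifying $\mathrm{Fib}(\gamma)$ with $\mathrm{Fib}(\beta)$ via the fibration $T\to C$ with fibre $\mathrm{Fib}(h)$, bounding $\mathrm{Cof}(\beta)$ by combining (13) with the transposed form of (12) (note your labels are shifted by one throughout: what you call (12) is (13), what you call (11) is (12)), and the fibration sequence $\mathrm{Fib}(\beta_1)\to\mathrm{Fib}(\beta)\to\Omega\mathrm{Cof}(\beta)$ are all correct, modulo some unchecked connectivity hypotheses (e.g.\ $\mathrm{Fib}(k)$ and $\mathrm{Fib}(g)$ need not be connected under the stated assumptions). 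The problem is the claim that ``the deviation of $\beta_1$ from an equivalence is acyclically governed by $\mathrm{Fib}(g)\wedge\Omega\mathrm{Cof}(\beta)$.'' Ganea's theorem \cite{MR31:4033} does not give this: it computes $\mathrm{Fib}(E/F\to B)\simeq F\ast\Omega B$ for a \emph{fibration} $F\to E\to B$, whereas $\beta\colon\mathrm{Fib}(g)\to\mathrm{Fib}(h)$ is not the inclusion of the fibre of any fibration with base $\mathrm{Cof}(\beta)$. Worse, $\beta_1$ is by construction exactly the comparison map $\gamma$ associated to the homotopy push-out square with legs $\mathrm{Fib}(g)\xrightarrow{\beta}\mathrm{Fib}(h)$ and $\mathrm{Fib}(g)\to\ast$, so controlling $\mathrm{Fib}(\beta_1)$ is itself an instance of statement (14), with no decrease in complexity that could drive an induction. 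As written, the argument is circular precisely where the entire content of the theorem is concentrated.

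For perspective: the paper does not prove (14) internally either; it is quoted from \cite[Theorem 5.1]{newblackersmassey}, the cellular Blakers--Massey theorem, whose proof is a substantial independent piece of work and does not reduce to a Ganea-type formula. Inequality (12) is likewise a citation, and (13) is the only part proved in the text, by applying Puppe's theorem to produce the push-out of fibres, Ganea's theorem to the genuine fibration sequence $\mathrm{Fib}(g)\to\mathrm{Fib}(hf)\to\mathrm{Fib}(k)$, and then (12) to the resulting square of cofibres. Your proposal takes (12) and (13) as given and does not address them; if your intent was to derive (14) from the first two parts, be aware that no such elementary derivation is known, which is why the authors outsource it.
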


\begin{proof}

Inequality (12) is~\cite[Theorem 3.4]{MR98e:55014}. To check
(13), observe that according to Puppe's theorem we can form the following homotopy-push out square
 of homotopy fibers:
\[\xymatrix{
\text{Fib}(g)\drto^{h'}\ar@/^17pt/[drr]^{\beta}\\
&\text{Fib}(hf)\rto^{f'}\dto_{g'} & \text{Fib}(h)\dto\\
&\text{Fib}(k)\rto &\Delta[0]
}\]
where  the  map $h'\colon\text{Fib}(g)\to \text{Fib}(hf)$ is a homotopy fiber of $g'\colon\text{Fib}(hf)\to\text{Fib}(k)$.
Let $\text{Fib}(k)\hookrightarrow R\text{Fib}(k)$ be a weak equivalence into a fibrant space.
Since the composition $g'h'$ factors through a contractible space,
by taking the cofibers of $h'$ and $\beta$, we can form a new homotopy push-out square:
\[\xymatrix{
\text{Cof}(h')\dto_{g''} \rto & \text{Cof}(\beta)\dto\\
R\text{Fib}(k)\rto & \Delta[0]
}\]
By Ganea's theorem~\cite{MR31:4033}, $\text{Fib}(g'')\simeq S^1\wedge \text{Fib}(g)\wedge\Omega \text{Fib}(k)$.
Under the assumption that $\text{Fib}(k)$ is connected, we can then apply statement (13)
to this last  homotopy push-out square to get $\text{Cof}(\beta)\gg S^1\wedge \text{Fib}(g)\wedge\Omega \text{Fib}(k)$.
Finally, inequality (14) is~\cite[Theorem 5.1]{newblackersmassey}.
\end{proof}

\section{Cellular functors and Bousfield Key Lemma}
\label{sec Bousfield}
An essential tool to understand the failure of preservation of fibrations by localizations and cellularizations is the so-called ``key lemma"
of  Bousfield~\cite[Theorem 5.3]{MR98m:55009} and Dror Farjoun~\cite{MR98f:55010}. In its original form it states that if, for connected
pointed spaces $X$ and $Y$, $\text{map}_{\ast}(X,Y)$ is weakly equivalent to a discrete space and $\pi_1Y$ acts trivially on the set of
components $\pi_0\text{map}_{\ast}(X,Y)$, then the Hurewicz map $ h_X\colon X \to {\mathbb Z} X$ induces a weak equivalence between
$\text{map}_{\ast}(X,Y)$ and $\text{map}_{\ast}({\mathbb Z} X,Y)$. The space $\mathbb Z X$ is the simplicial abelian group freely generated
by the simplices of $X$. It is the simplicial model for the Dold-Thom infinite symmetric product $SP^\infty X \simeq \prod K(H_n (X; \mathbb Z) , n)$,
see for example \cite[3.11]{MR0279808}.

The assumptions of the key lemma are equivalent to the statement that
the inclusion $S^1\vee X\hookrightarrow  S^1\times X$ induces a weak equivalence between
$\text{map}_{\ast}(S^1\vee X, Y)$ and $\text{map}_{\ast}(S^1\times X, Y)$
which means that $Y$ is  local with respect to  $S^1\vee X\hookrightarrow  S^1\times X$.
The key lemma states therefore that, for a connected space $X$,  the Hurewicz map $h_X\colon X \to {\mathbb Z} X$ is an
$L_{S^1\vee X\hookrightarrow  S^1\times X}$-equivalence
(see~\cite{MR98f:55010}). One way to prove this lemma is to show a stronger statement:
 the map $h_X\colon X \to {\mathbb Z} X$ is constructed inductively starting with  $X$ and taking push-outs
along the inclusion $S^1\vee X\hookrightarrow  S^1\times X$ or its suspensions.
Since the cofiber of the map $S^1\vee X\hookrightarrow  S^1\times X$  is
$S^1\wedge X$, it follows that $\text{Cof}(h_X)> S^1\wedge X$
(in fact a stronger relation  $\text{Cof}(h_X)\gg S^1\wedge X$ holds in this case)~\cite[Section~4.A]{MR98f:55010}.
Furthermore if $X$ is simply connected, the same argument can be used to show that the
Hurewicz map $h_X\colon X \to {\mathbb Z} X$  is constructed inductively starting with  $X$ and taking push-outs along
the inclusion $S^2\vee X\hookrightarrow  S^2\times X$ or its suspensions. By the Ganea theorem~\cite{MR31:4033},
$\text{Fib}(S^2\vee X\hookrightarrow  S^2\times X)\simeq S^1\wedge \Omega S^2\wedge \Omega X$.
Thus, if $X>S^1\wedge A$, for a connected $A$,  then:
\[
\text{Fib}(S^2\vee X\hookrightarrow  S^2\times X)>S^1\wedge S^1\wedge \Omega X>S^1\wedge S^1\wedge  A=S^2\wedge A
\]
We can then use Theorem~\ref{thm basicmoves}.(12) to infer that
 $\text{Fib}(h_X)>S^2\wedge A$. 
These two  properties of the Hurewicz map  play an  important role in this paper and therefore we are going to give them a name.
Recall that a co-augmented functor $ {\mathcal K}\colon\text{Spaces}_{\ast}\to \text{Spaces}_{\ast}$ is equipped with a natural
transformation $\mu_X\colon X\to {\mathcal K}(X)$ between $\text{id}\colon\text{Spaces}_{\ast}\to \text{Spaces}_{\ast}$ and ${\mathcal K}$.

\begin{Def}\label{cellular map}
A co-augmented functor   $\mu_X\colon X\to {\mathcal K}(X)$ is  called {\bf cellular} if:
\begin{itemize}
\item[(a)] $\text{Cof}(\eta_{X}\colon X\to \kk(X)) > S^1\wedge X$ for any connected   $X$;
\item[(b)]  $\text{Fib}(\eta_{X}\colon X\to \kk(X))>S^2\wedge A$ for any $X>S^1\wedge A$ and connected $A$.
\end{itemize}
\end{Def}
The following is a consequence  of Proposition~\ref{prop basicmoves3}.(9).

\begin{cor}\label{col:skeletal}
If $\mathcal K$ is a co-augmented  cellular functor, then $ {\mathcal K}(X)$ is $X$-cellular  for any $X$.
\hfill{\qed}
\end{cor}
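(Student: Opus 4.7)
The plan is to apply inequality (9) of Proposition \ref{prop basicmoves3} to a single cofibration sequence built from the co-augmentation $\mu_X$. The whole point of clause (a) in Definition \ref{cellular map} is to supply the acyclic input that (9) requires, so the corollary should reduce to a one-step instantiation.

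Concretely, for a connected pointed space $X$ I would consider the cofibration sequence
\[
X \xrightarrow{\mu_X} \mathcal{K}(X) \longrightarrow \text{Cof}(\mu_X).
\]
The relation $X\gg X$ holds trivially (any space is cellular over itself), and clause (a) of Definition \ref{cellular map} guarantees $\text{Cof}(\mu_X)>S^1\wedge X$. Specializing inequality (9) of Proposition \ref{prop basicmoves3} with $A=X$ to this cofibration sequence then gives precisely the desired conclusion: the left-hand term satisfies $Z\gg A$, the right-hand term satisfies $Y>S^1\wedge A$, and so the middle term is $\gg A$, i.e.\ $\mathcal{K}(X)\gg X$. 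Clause (b) of Definition \ref{cellular map} plays no role here; it will be needed later, for the more delicate preservation of fibrations and of polyGEMs.

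The only step that deserves a second look is the passage from connected $X$ to an arbitrary pointed space, since Definition \ref{cellular map}(a) is phrased for connected inputs. The natural move is to reduce to the basepoint component: the class $\calc(X)$ in the pointed category is controlled by the path component of the basepoint, while the remaining components, which can be split off as wedge summands, sit trivially in $\calc(X)$. Apart from this bookkeeping, I do not expect any obstacle: the content of the corollary is entirely concentrated in the definition of a cellular functor and in inequality (9), and the proof is little more than the composition of the two.
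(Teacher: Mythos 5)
Your argument is exactly the one the paper intends: the corollary is stated there as "a consequence of Proposition~\ref{prop basicmoves3}.(9)", and your instantiation of (9) with the cofibration sequence $X\to\mathcal{K}(X)\to\mathrm{Cof}(\mu_X)$, $A=X$, together with clause (a) of Definition~\ref{cellular map}, is the intended one-step proof. Your disposal of the disconnected case is also fine, though the cleanest phrasing is that for disconnected $X$ the space $S^0$ is a retract of $X$, so $\mathcal{C}(X)$ contains all pointed spaces and the claim is vacuous.
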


\begin{example}\label{ex:cellularfunctor}
The natural unit map $X\to \Omega^n(S^n\wedge X)$ given by the  ``loop-suspension" adjunction defines a cellular functor, and so does
the map $X\to QX = \Omega^\infty\Sigma^\infty X$.  The co-augmentation $X\to SP^nX = X^n/\Sigma_n$ to the $n$-symmetric space
satisfies  the requirement (a) of~\ref{cellular map}~\cite[Section~4.A]{MR98f:55010}, and one might ask  if this  map is  also cellular.
We believe it is, although we do not have a  clear argument at the time of writing this paper. Our key example of a co-augmented cellular
functor is given by the Hurewicz map $h_X\colon X \to {\mathbb Z} X$.
\end{example}

How can we construct new co-augmented cellular functors out of old ones? For that we are going to use
 the following proposition.
Let $f\colon X\to Y$ be a map.  Take its homotopy cofiber $\alpha\colon Y\hookrightarrow \text{Cof}(f)$ and the  homotopy fiber
$\text{Fib}(\alpha)\twoheadrightarrow Y$ of $\alpha$. These maps fit into the following commutative diagram:
\[\xymatrix@R=8pt@C=18pt{
&& \text{Fib}(\alpha)\ar@{->>}[dr]\ddto|\hole \\
&X\ar@{^(->}[dd]\rrto|(.3){f}\urto^-{\overline{f}} & &Y\ar@{^(->}[dd]^{\alpha}\\
& & CX'\ar@{->>}[dr]\\
\Delta[0]&CX\ar@{->>}[l]_-{\sim}\rrto\ar@{^(->}[ur]^-{\sim} & & \text{Cof}(f)
}\]
where the front square is a push-out, the right back square is a pull-back, and the indicated maps are weak equivalence, fibrations and cofibrations.
The map $\overline{f}\colon X\to \text{Fib}(\alpha)$ is called the comparison map.

\begin{prop}\label{prop basicmoves}
Let $f: X \to Y$ be a map, $\overline{f}$ as above, and  $A$ be a connected space.
\begin{enumerate}
\setcounter{enumi}{\value{tempcounter}}
\item If $X$ is connected and $\text{\rm Cof}(f)$ is simply connected, then $\text{\rm Cof}(\overline{f})\gg S^1\wedge X$.
\item If $\text{\rm Fib}(f)$ is simply connected and $X>S^1\wedge A$, then $\text{\rm Fib}(\overline{f})>S^2\wedge A$.
\end{enumerate}
\end{prop}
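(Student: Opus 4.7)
Both parts will be deduced from a single application of Theorem~\ref{thm basicmoves} to the homotopy pushout square
\[
\xymatrix{X \ar[r]^-{f} \ar[d]_-{g} & Y \ar[d]^-{\alpha} \\ \Delta[0] \ar[r]^-{k} & \text{Cof}(f)}
\]
in which $g$ is the constant map and $\alpha$ is the canonical map to the cofiber. Under these identifications $\text{Fib}(g)\simeq X$ and $\text{Fib}(k)\simeq \Omega\,\text{Cof}(f)$, the homotopy pullback $T$ is $\text{Fib}(\alpha)$, and the two comparison maps $\beta$ and $\gamma$ from Theorem~\ref{thm basicmoves} both coincide with $\overline{f}\colon X\to \text{Fib}(\alpha)$. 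The inequalities for $\text{Cof}(\overline{f})$ and $\text{Fib}(\overline{f})$ will thus be read off from the appropriate parts of that theorem.

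For part (15), the hypothesis that $\text{Cof}(f)$ is simply connected forces $\text{Fib}(k)$ to be connected, so Theorem~\ref{thm basicmoves}.(13) yields
\[
\text{Cof}(\overline{f})\gg S^1\wedge X\wedge \Omega^2\text{Cof}(f).
\]
Since every pointed space -- in particular $\Omega^2\text{Cof}(f)$ -- belongs to $\calc(S^0)$, Proposition~\ref{prop basicmoves1}.(1) lets us smash with $S^1\wedge X$ and deduce $S^1\wedge X\wedge \Omega^2\text{Cof}(f)\gg S^1\wedge X$. Transitivity of $\gg$ finishes the argument.

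For part (16), Theorem~\ref{thm basicmoves}.(14) applied to the same square delivers
\[
\text{Fib}(\overline{f})>S^1\wedge \Omega\,\text{Fib}(f)\wedge \Omega X.
\]
From there the conclusion is almost automatic: the simple connectedness of $\text{Fib}(f)$ translates, via Proposition~\ref{prop basicmoves1}.(2), into $\Omega\,\text{Fib}(f)\gg S^1$, while $X>S^1\wedge A$ gives $\Omega X>A$. Smashing these inequalities together with an additional $S^1$ produces $S^1\wedge \Omega\,\text{Fib}(f)\wedge \Omega X>S^2\wedge A$, and transitivity of $>$ completes the proof. The main effort will go into verifying the fourfold connectivity hypothesis of~(14), namely that $Y$, $\text{Fib}(\alpha)$, and $\text{Fib}(\overline{f})$ are all connected. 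I plan to observe first that $X>S^1\wedge A$ with $A$ connected forces $X$ to be simply connected, because the acyclic class $\overline{\calc(S^1\wedge A)}$ consists only of simply connected spaces in that case; then to propagate simple connectivity through the fiber sequence $\text{Fib}(f)\to X\to Y$ to $Y$ and through van Kampen to $\text{Cof}(f)$; and finally to obtain connectivity of $\text{Fib}(\alpha)$ from Proposition~\ref{prop basicmoves2}.(7), together with connectivity of $\text{Fib}(\overline{f})$ from the Blakers--Massey connectivity of the comparison map $\overline{f}$ applied to the pushout square.
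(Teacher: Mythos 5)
Your proposal is correct and takes essentially the same route as the paper: the paper's proof also applies Theorem~\ref{thm basicmoves}.(13) and~(14) to the homotopy push-out square $X\to Y$, $X\to CX$, $CX\to \text{\rm Cof}(f)$ (your replacement of the cone $CX$ by $\Delta[0]$ is immaterial) and finishes with the same smash-product manipulations. The only difference is that you spend more effort on the connectivity hypotheses of~(14), which the paper dispatches by observing that $X>S^1\wedge A$ forces $X$ to be simply connected and that $\text{\rm Fib}(\alpha)\gg X$ by Proposition~\ref{prop basicmoves2}.(7), hence $\text{\rm Fib}(\alpha)$ is simply connected as well.
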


\begin{proof}
We prove (15) first. By the assumption $\text{Cof}(f)$ is $1$-connected. Thus its loop space, which is the homotopy
fiber of $CX\to\text{Cof}(f)$, is connected. We can therefore apply Theorem~\ref{thm basicmoves}.(13) to the following homotopy push-out square
\[\xymatrix{
X\ar@{^(->}[d]\rto^-f & Y\dto\ar@{^(->}[d]^{\alpha}\\
CX\rto & \text{Cof}(f)
}\]
to get $\text{Cof}(\overline{f})\gg S^1\wedge X\wedge \Omega^2\text{Cof}(f)\gg S^1\wedge X$.

To prove (16), assume $X>S^1\wedge A$.  This  implies  $X$ is $1$-connected.
According to Proposition~\ref{prop basicmoves2}.(7) $\text{Fib}(\alpha)\gg X$ and thus $\text{Fib}(\alpha)$
is also $1$-connected. The hypothesis of Theorem~\ref{thm basicmoves}.(14) is thus satisfied and we get $\text{\rm Fib}(\overline{f})>S^1\wedge \Omega X\wedge \Omega\text{Fib}(f)$.
Since $\text{Fib}(f)$ is $1$-connected,  $ \Omega\text{Fib}(f)$ is connected and we can conclude
$
\text{\rm Fib}(\overline{f})>S^1\wedge \Omega X\wedge \Omega\text{Fib}(f)>S^2\wedge \Omega X>S^2\wedge A
$.
\end{proof}

Given a co-augmented functor $\calk$, the above construction allows us to construct a new functor
$\overline{\calk}\colon\text{Spaces}_{\ast}\to \text{Spaces}_{\ast}$. For any space $X$, the co-augmentation
$\mu_{X}\colon X\to \calk(X)$ induces namely a map $ \overline{\mu_X}\colon X\to \overline{\calk}(X)$.

Here is the key statement of this section:
\begin{prop}\label{prop ovkB-cellular}
Let $\mu_{X}\colon X\to \calk(X)$ be a co-augmented functor. Assume:
\begin{itemize}
\item If $X$ is connected, then $\text{\rm Cof}(\mu_{X})$
is simply connected.
\item If $X$ is simply connected, then so is $\calk(X)$ and $\pi_2(\mu_X)$ is an epimorphism.
\end{itemize}
 Then
 $\overline{\calk}$ is a cellular  co-augmented functor and  $\overline{\calk}(X)$ is $X$-cellular for any $X$.
\end{prop}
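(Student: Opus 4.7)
The plan is to verify the two conditions of Definition~\ref{cellular map} for $\overline{\calk}$ by direct application of Proposition~\ref{prop basicmoves} with $f = \mu_X$. Indeed, the construction of $\overline{\calk}(X)$ as the homotopy fiber of $\alpha\colon \calk(X)\hookrightarrow \text{Cof}(\mu_X)$, together with the comparison map $\overline{\mu_X}\colon X\to \overline{\calk}(X)$, matches verbatim the setup preceding Proposition~\ref{prop basicmoves}.

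For condition~(a), assume $X$ is connected. By hypothesis~(i), $\text{Cof}(\mu_X)$ is simply connected, so Proposition~\ref{prop basicmoves}.(15) applied to $\mu_X$ gives $\text{Cof}(\overline{\mu_X}) \gg S^1 \wedge X$, which in particular implies $\text{Cof}(\overline{\mu_X}) > S^1 \wedge X$.

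For condition~(b), assume $X > S^1 \wedge A$ with $A$ connected. I first need to verify the standing hypothesis of Proposition~\ref{prop basicmoves}.(16), namely that $\text{Fib}(\mu_X)$ is simply connected. The class of simply connected pointed spaces is acyclic: it is closed under weak equivalences; under pointed homotopy colimits (wedges, pushouts along pointed maps of connected spaces, and telescopes of simply connected spaces are simply connected); and under extensions by fibrations via the long exact sequence. Since it contains $S^1\wedge A = \Sigma A$, it follows that $X$ itself is simply connected. Then hypothesis~(ii) applies and tells us that $\calk(X)$ is simply connected and that $\pi_2(\mu_X)$ is surjective. The relevant portion of the long exact sequence of $\text{Fib}(\mu_X) \to X \to \calk(X)$ reads
\[
\pi_2(X) \twoheadrightarrow \pi_2(\calk(X)) \to \pi_1(\text{Fib}(\mu_X)) \to \pi_1(X) = 0,
\]
whence $\pi_1(\text{Fib}(\mu_X)) = 0$; and since $\pi_1(\calk(X)) = 0$ with $X$ connected, the fiber is connected as well. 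Proposition~\ref{prop basicmoves}.(16) now applies and yields $\text{Fib}(\overline{\mu_X}) > S^2 \wedge A$, which is exactly condition~(b).

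Both clauses of Definition~\ref{cellular map} thereby hold, so $\overline{\calk}$ is a co-augmented cellular functor, and the final assertion that $\overline{\calk}(X)$ is $X$-cellular follows immediately from Corollary~\ref{col:skeletal}. The only point of substance in the argument is checking that $\text{Fib}(\mu_X)$ is simply connected when $X > S^1 \wedge A$; the surjectivity assumption on $\pi_2(\mu_X)$ is indispensable here, for without it the fiber could acquire a non-trivial $\pi_1$ and the acyclic inequality from Proposition~\ref{prop basicmoves}.(16) would be out of reach.
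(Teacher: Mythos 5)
Your proof is correct and follows essentially the same route as the paper: both verify conditions (a) and (b) of Definition~\ref{cellular map} by applying Proposition~\ref{prop basicmoves}.(15) and (16) to $f=\mu_X$, with the two hypotheses of the proposition serving exactly to guarantee the hypotheses of those two statements. Your explicit check that $\text{Fib}(\mu_X)$ is simply connected (via the long exact sequence and the surjectivity of $\pi_2(\mu_X)$) is the one step the paper leaves implicit, and you carry it out correctly.
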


\begin{proof}
Requirement (a) of Definition~\ref{cellular map} follows from Proposition~\ref{prop basicmoves}.(15)
and requirement (b)  from Proposition~\ref{prop basicmoves}.(16). The fact that $\overline{\calk} (X)$ is
$X$-cellular is a direct consequence of~\ref{col:skeletal}.
\end{proof}

For example the assumptions of Proposition~\ref{prop ovkB-cellular} are satisfied if $\calk$ is   cellular:
\begin{cor}\label{cor overlinecellular}
If $\calk$ is a co-augmented cellular functor, then  so is $\overline{\calk}$. \hfill{\qed}
\end{cor}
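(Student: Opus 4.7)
The plan is to invoke Proposition~\ref{prop ovkB-cellular} and verify its two hypotheses directly from conditions (a) and (b) in Definition~\ref{cellular map}. Throughout I will use the fact that, for each $n\geq 0$, the class of $n$-connected pointed spaces is acyclic: it is closed under weak equivalences and pointed homotopy colimits (since $n$-connected spaces admit CW models with no cells in dimensions $1,\ldots,n$, and such models are preserved by wedges, pushouts, and telescopes), and it is closed under extensions by fibrations by the long exact sequence of homotopy groups.

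For the first hypothesis of Proposition~\ref{prop ovkB-cellular}, assume $X$ is connected. Condition (a) gives $\text{Cof}(\mu_X)>S^1\wedge X$, and $S^1\wedge X$ is simply connected because $X$ is connected. The smallest acyclic class containing $S^1\wedge X$ is therefore contained in the acyclic class of simply connected spaces, so $\text{Cof}(\mu_X)$ is simply connected.

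For the second hypothesis, assume $X$ is simply connected. Applying Proposition~\ref{prop basicmoves1}.(2) to the trivial inequality $\Omega X\gg \Omega X$ yields $X\gg S^1\wedge \Omega X$, hence $X>S^1\wedge \Omega X$. Since $\Omega X$ is connected, condition (b) applies with $A=\Omega X$ and gives $\text{Fib}(\mu_X)>S^2\wedge \Omega X$. The target is $2$-connected, so by the same acyclicity observation $\text{Fib}(\mu_X)$ is $2$-connected. The long exact sequence of the fibration $\text{Fib}(\mu_X)\to X\to \calk(X)$ then forces $\pi_1\calk(X)=0$ (using $\pi_1X=0$ and $\pi_0\text{Fib}(\mu_X)=0$), and shows that $\pi_2(\mu_X)\colon\pi_2X\to\pi_2\calk(X)$ is surjective (using $\pi_1\text{Fib}(\mu_X)=0$). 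Both hypotheses of Proposition~\ref{prop ovkB-cellular} are satisfied, so $\overline{\calk}$ is a co-augmented cellular functor.

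There is no serious obstacle here: the corollary is essentially a direct verification, and the only subtlety worth emphasizing is the acyclicity of the class of $n$-connected spaces, which is what lets us upgrade the acyclic inequalities coming from cellularity of $\calk$ into honest connectivity statements on $\text{Cof}(\mu_X)$ and $\text{Fib}(\mu_X)$.
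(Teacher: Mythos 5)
Your proof is correct and is exactly the argument the paper intends: the corollary carries a bare \qed precisely because the preceding sentence asserts that a cellular $\calk$ satisfies the two hypotheses of Proposition~\ref{prop ovkB-cellular}, and your verification of those hypotheses, using the acyclicity of the classes of $n$-connected spaces to upgrade the inequalities $\text{Cof}(\mu_X)>S^1\wedge X$ and $\text{Fib}(\mu_X)>S^2\wedge\Omega X$ to connectivity statements, is the expected one. The only micro-omission is that hypothesis (ii) requires $\calk(X)$ to be \emph{simply connected} while your long exact sequence only controls the basepoint component (giving $\pi_1\calk(X)=0$); connectedness of $\calk(X)$ follows in one line from the connectedness of $\text{Cof}(\mu_X)=\calk(X)\cup_X CX$, which you have already established.
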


\section{The modified Bousfield-Kan tower}
\label{sec injtwist}
The aim of this section is to show that the co-augmented functors  $\mu_{k,X}\colon X\to z_kX$ in a  modified version of the integral
Bousfield-Kan  completion tower, as defined by the second author in~\cite{MR2002e:55012}, are cellular (see Definition~\ref{cellular map}).
The modified tower  was built originally as an elementary construction that models the
pro-homology type of any space by a tower of much simpler spaces called polyGEMs.

\begin{Def} \label{polyGEM}
A  \emph{$1$-{\bf polyGEM}} is defined to be a GEM, i.e., a space weakly equivalent to a product of abelian Eilenberg-MacLane spaces.
For $n\geq 2$, an $n$-{\bf polyGEM} is a space which is weakly equivalent to a retract of the homotopy fiber of
a map from an $(n-1)$-polyGEM to a GEM. A space is a {\bf polyGEM} if it is an $n$-polyGEM for some integer~$n$.
\end{Def}

PolyGEMs are examples of nilpotent spaces. They are in a  sense universal such examples:
\begin{prop}\label{cor nilpotnent}
A connected space $X$ is nilpotent if and only if, for any $n\geq 1$, the $n$-th Postnikov section $P_nX$ is a {\rm polyGEM}.
\end{prop}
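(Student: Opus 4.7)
The plan is to split the equivalence into two implications that use quite different ingredients. In both directions I exploit the standard fact that Postnikov truncation interacts well with nilpotency: $P_nX$ is nilpotent whenever $X$ is, and conversely $X$ is nilpotent as soon as every $P_nX$ is.

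For the forward direction, I assume $X$ is nilpotent and fix $n \geq 1$, so that $P_nX$ is a connected nilpotent space with only finitely many non-vanishing homotopy groups. The crucial input is the classical principal refinement of the Postnikov tower of such a space (Bousfield--Kan, Hilton--Mislin--Roitberg): there is a finite tower $* = X_0 \to X_1 \to \cdots \to X_r \simeq P_nX$ in which every stage $X_{k+1} \to X_k$ is a principal fibration with fiber an Eilenberg--Mac Lane space $K(A_{k+1}, m_{k+1})$ for some abelian group $A_{k+1}$. Equivalently, $X_{k+1}$ is the homotopy fiber of a classifying map $X_k \to K(A_{k+1}, m_{k+1}+1)$ into a GEM. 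I then induct on $k$ matching the recursion of Definition~\ref{polyGEM}: the base $X_0 = *$ is an empty product of abelian Eilenberg--Mac Lane spaces, hence a $1$-polyGEM, and the inductive step is exactly the construction prescribed by the definition. After $r$ stages one reaches $P_nX$ as an $r$-polyGEM.

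For the backward direction, suppose each $P_nX$ is a polyGEM. By the remark made immediately before the proposition, every polyGEM is nilpotent, and hence every $P_nX$ is nilpotent. This forces $\pi_1X = \pi_1 P_1X$ to be a nilpotent group, and for each $i \geq 2$ it forces $\pi_1X$ to act nilpotently on $\pi_iX = \pi_i P_iX$. These two conditions are the definition of a nilpotent connected space, so $X$ is nilpotent.

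The main obstacle is really only in the forward direction, and it lies in appealing to (or reproving) the classical principal Postnikov refinement for connected nilpotent spaces of finite Postnikov dimension. Once this refinement is in hand the induction matching the recursive definition of a polyGEM is routine, and the backward implication is immediate from the fact, already noted in the text, that polyGEMs are nilpotent.
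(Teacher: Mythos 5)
Your proof is correct and follows the same route as the paper: the forward direction via the classical principal refinement of the Postnikov tower of a nilpotent space (whose stages are exactly the fibrations over GEMs demanded by the recursive definition of a polyGEM), and the backward direction via the fact that polyGEMs are nilpotent, so each $P_nX$ is nilpotent and hence $X$ is. Your write-up is simply a more detailed version of the paper's two-line argument.
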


\begin{proof}
If $X$ is nilpotent its Postnikov tower admits a refinement by principal fibrations whose fibers are Eilenberg-MacLane spaces. Conversely, if $P_n X$ is a polyGEM, it is nilpotent. Hence its fundamental group is nilpotent and acts nilpotently on all homotopy
groups. This is true for all integers $n$, so $X$ itself is nilpotent.
\end{proof}

How can we detect that a space is a polyGEM? This can be done using the
modified version of the integral Bousfield-Kan homology completion tower:
\[\xymatrix{
& & X\ar@/^10pt/[drr]^{\mu_{0,X}}\drto|{\mu_{1,X}}\ar@/_10pt/[dl]_{\mu_{k,X}}\\
\cdots\ar@{->>}[r]^(.4){q_{k+1}} & z_k X\ar@{->>}[r]^-{q_k} &\cdots\ar@{->>}[r]^(.4){q_2} & z_1 X\ar@{->>}[r]^(.4){q_1} & z_0X
}\]
Recall from~\cite{MR2002e:55012} the inductive construction of the tower.
For $k=0$, $z_0 X= {\mathbb Z} X$  and
$\mu_{0,X}\colon X\to z_0 X$ is the Hurewicz map $h_X\colon X\to {\mathbb Z}X$.  For $k\geq 0$, the space   $z_{k+1}X$ is the homotopy fiber of the composition:
\[z_{k}X\xrightarrow{\alpha} \text{Cof}(\mu_{k,X})\xrightarrow{h_{\text{Cof}(\mu_{k,X})}} {\mathbb Z}\text{Cof}(\mu_{k,X})\] and the
 map $\mu_{k+1,X}\colon X\to z_{k+1} X$ fits into the following commutative diagram:
 \begin{equation}\label{diagram def}\xymatrix{
 X\ar@/^18pt/[drr]^{\mu_{k,X}}\drto^-{\overline{\mu_{k,X}}}\ar@/_18pt/[ddr]_{\mu_{k+1,X}} \\
&\overline{z_k}X\ar@{->>}[r] \dto &  z_{k} X\ar@{^(->}[r]^-{\alpha } \ar@{=}[d] &\text{Cof}(\mu_{k,X})\ar@{^(->}[d]^{h_{\text{Cof}(\mu_{k,X})}}\\
& z_{k+1} X\ar@{->>}[r]^{q_{k+1}} & z_{k} X\ar@{^(->}[r]^-{h\alpha} &  {\mathbb Z}\text{Cof}(\mu_{k,X})\\
  }\tag{$\ast$}\end{equation}

Observe that $z_0 X$ is a GEM by definition, $z_1X$ is a $2$-polyGEM as it is the homotopy fiber of a map between GEMs, and,
more generally $z_k X$ is a $(k+1)$-polyGEM for any $k\geq 0$, as it is by induction the homotopy fibre of a map from a $k$-polyGEM
to a GEM. Moreover, this new tower mimics the behavior of the classical Bousfield-Kan tower in the following sense:

\begin{prop} \label{tower}
For any space $X$ the maps  $\mu_{k,X}\colon X\to z_k X $
induce  a pro-homology and cohomology isomorphism between the constant tower $X$ and
the modified integral Bousfield-Kan tower $(\cdots \twoheadrightarrow z_1X\twoheadrightarrow z_0X)$.
Moreover, if $X$ is a polyGEM, then this map of towers induces a pro-isomorphism on pro-homotopy groups.
\end{prop}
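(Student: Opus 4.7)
The plan splits into two parts. The pro-homology and pro-cohomology isomorphism is handled by induction on the tower index $k$, and the pro-homotopy isomorphism for polyGEMs is then established by a secondary induction on the polyGEM rank.

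For the pro-homology isomorphism, I would fix a homological degree $n$ and show that $H_n(\mu_{k,X})$ becomes an isomorphism for all $k$ sufficiently large, equivalently that the pro-group $\{H_n(\text{Cof}(\mu_{k,X}))\}_k$ is pro-trivial. The main input at each stage is the Dold--Thom identification $\pi_n(\mathbb{Z}\,\text{Cof}(\mu_{k,X})) = H_n(\text{Cof}(\mu_{k,X}))$ combined with the Serre spectral sequence of the defining fibration
\[
z_{k+1}X \to z_k X \to \mathbb{Z}\,\text{Cof}(\mu_{k,X}).
\]
This converts the current homological obstruction into the homotopical input of the next base, thereby yielding a comparison between $H_*(z_{k+1}X)$ and $H_*(z_k X)$. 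A complementary tool is the cofiber triangle
\[
\text{Cof}(\mu_{k+1,X}) \to \text{Cof}(\mu_{k,X}) \to \text{Cof}(q_{k+1}),
\]
obtained by applying the octahedral axiom to the factorization $\mu_{k,X} = q_{k+1}\circ\mu_{k+1,X}$, which expresses how the homology of the cofibers decays along the tower. The pro-cohomology statement then follows from the pro-homology one via universal coefficients applied level-wise.

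For the pro-homotopy part, I would induct on the polyGEM rank of $X$. The base case is that of a GEM, $X \simeq \prod_i K(A_i, i)$; here the construction $z_k X$ unwinds via Dold--Thom into a manageable product of Eilenberg--MacLane spaces, and the pro-$\pi_*$ isomorphism can be verified directly. For the inductive step, $X$ is a retract of a homotopy fiber $F \to P \to G$ with $P$ an $(n-1)$-polyGEM and $G$ a GEM; functoriality of the tower together with the long exact sequence of pro-homotopy groups and the five lemma reduce the statement to the already-established lower-rank case.

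The main obstacle, I expect, lies in the inductive pro-homology step. The Hurewicz map $h_Y\colon Y \to \mathbb{Z}Y$ does not produce an arbitrarily large range of $H_*$-isomorphism in a single step, so the connectivity growth of $\text{Cof}(\mu_{k,X})$ is not manifestly monotone. A careful interleaved induction on $k$ and homological degree---using the Serre spectral sequence to trade a known range of isomorphism for a larger one at the next stage---will likely be necessary. The non-simply-connected case requires additional attention at $\pi_1$ throughout.
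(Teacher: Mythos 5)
First, note that the paper does not actually prove this proposition: it is a direct citation of Theorem~2.2 and Proposition~2.13 of \cite{MR2002e:55012}, where the modified tower is constructed. Your plan attempts a proof from scratch, which is a different (and more ambitious) route, but as written it has gaps that would prevent it from closing. For the pro-homology part, the opening reformulation is already wrong: a pro-isomorphism between the constant tower $H_n(X)$ and $\{H_n(z_kX)\}$ is \emph{not} equivalent to $H_n(\mu_{k,X})$ being an isomorphism for large $k$ (no individual $\mu_{k,X}$ is a homology isomorphism in general; already $H_n(z_0X)=H_n(\mathbb{Z}X)$ is much larger than $H_n(X)$). The correct target, pro-triviality of $\{\tilde H_*(\mathrm{Cof}(\mu_{k,X}))\}$, you do state, but the tools you propose do not reach it: the Serre spectral sequence of $z_{k+1}X\to z_kX\to\mathbb{Z}\,\mathrm{Cof}(\mu_{k,X})$ computes the homology of the total space from that of the base and fiber, whereas here the total space and base are known and the fiber is the unknown; extracting $H_*(z_{k+1}X)$ that way is an Eilenberg--Moore problem, and it does not yield pro-triviality. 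The mechanism that actually makes the tower work never appears in your plan: since $\mathbb{Z}$ is a monad, the Hurewicz map $h_C\colon C\to\mathbb{Z}C$ is \emph{split injective} on integral homology; hence (i) $H_*(X)\to H_*(z_kX)$ is injective for every $k$, so $\tilde H_*(\mathrm{Cof}(\mu_{k,X}))$ is the cokernel, and (ii) the composite $z_{k+1}X\to z_kX\to \mathrm{Cof}(\mu_{k,X})\to\mathbb{Z}\,\mathrm{Cof}(\mu_{k,X})$ is null by construction, so $z_{k+1}X\to\mathrm{Cof}(\mu_{k,X})$ is zero on reduced homology. A short chase in the ladder of cofibration exact sequences then shows that $\tilde H_*(\mathrm{Cof}(\mu_{k+1,X}))\to\tilde H_*(\mathrm{Cof}(\mu_{k,X}))$ is the zero map, which is the pro-triviality you need; no spectral sequence and no ``connectivity growth'' of the cofibers (which indeed does not occur, and is a red herring) is involved.

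For the pro-homotopy part, the inductive step as stated does not work: the functor $z_k$ is built from cofibers and fibers of Hurewicz maps and does \emph{not} preserve fibration sequences, so $\{z_kF\}\to\{z_kP\}\to\{z_kG\}$ is not a fibration sequence of towers and there is no long exact sequence of pro-homotopy groups to which the five lemma could be applied; nor does the retraction of $X$ off $F$ interact with the tower in the way the argument requires. The base case is also not ``direct'': for a GEM $W$ one has $\pi_*(z_0W)=H_*(W)$, which is far larger than $\pi_*(W)$, so even there the pro-$\pi_*$ statement must be deduced from the pro-$H_*$ statement (using that $\pi_*$ of a GEM is a natural retract of its homology), not read off from Dold--Thom. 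If you want a self-contained argument you should reconstruct the proofs of \cite[Theorem~2.2, Proposition~2.13]{MR2002e:55012} along the lines sketched above, which is what the paper implicitly relies on.
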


\begin{proof}
The pro-homology isomorphism (and therefore also pro-cohomology isomorphism) holds by \cite[Theorem~2.2]{MR2002e:55012}.
As a corollary, \cite[Proposition~2.13]{MR2002e:55012}, the map of towers induces a pro-isomorphism on pro-homotopy groups for any
polyGEM, i.e. the kernel and cokernel are pro-isomorphic to zero.
\end{proof}

We can now state our  detection principle.

\begin{prop} \label{polyretract}
A space  $W$ is a  polyGEM if and only if it is a retract of $z_nW$ for  some~$n$.
\end{prop}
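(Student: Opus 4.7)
The ``if'' direction is immediate: as recorded just after diagram $(\ast)$, $z_nW$ is an $(n+1)$-polyGEM for every $n\ge 0$, and the class of polyGEMs is closed under retracts. For index $\ge 2$ this is built into Definition~\ref{polyGEM}; for GEMs it is the standard observation that a retract of a simplicial abelian group is again weakly equivalent to such a space (its $k$-invariants are pulled back from the trivial ones of the ambient GEM).

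For the ``only if'' direction I would argue by induction on the polyGEM index, proving the sharper assertion that every $n$-polyGEM $W$ is a retract of $z_{n-1}W$. The base $n=1$ is the Dold--Thom fact recalled at the start of Section~\ref{sec Bousfield}: the Hurewicz map $h_W\colon W\to \mathbb{Z}W=z_0W$ is split by the multiplication of the simplicial abelian group to which the GEM $W$ is weakly equivalent. Before the inductive step I would record a ``transfer of retraction'' principle: if $i\colon W\to F$ and $r\colon F\to W$ exhibit $W$ as a retract of $F$, and if $\sigma\colon z_kF\to F$ is a section of $\mu_{k,F}$, then naturality of $\mu_k$ makes the composite
\[
z_kW \xrightarrow{\;z_k(i)\;} z_kF \xrightarrow{\;\sigma\;} F \xrightarrow{\;r\;} W
\]
a section of $\mu_{k,W}$. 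This reduces the inductive step to the case $W=\mathrm{Fib}(\phi\colon V\to U)$, where $V$ is an $n$-polyGEM and $U$ is a GEM; the inductive hypothesis then provides a section $s_V\colon z_{n-1}V\to V$, and the base case a section $s_U\colon\mathbb{Z}U\to U$.

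The main obstacle is assembling these two pieces of data into a single section $z_nW\to W$. My plan is to unpack diagram $(\ast)$ for this $W$ and to use functoriality of the $z_k$-construction applied to the fibration $W\to V\to U$, comparing inductively the towers $\{z_kW\}$, $\{z_kV\}$ and $\{z_kU\}$; one then splices $s_U$ into the GEM factor that appears at each stage of diagram~$(\ast)$ and $s_V$ into the polyGEM factor to build the desired section of $\mu_{n-1,W}$. A complementary and perhaps cleaner route is to invoke Proposition~\ref{tower}: since $W$ is a polyGEM, the map from the constant tower $W$ to $\{z_kW\}$ is a pro-isomorphism on pro-homotopy, and the finite polyGEM index of $W$ together with its nilpotency should allow the level-wise splittings produced by the pro-iso to assemble into a single space-level retraction, avoiding the explicit diagrammatic bookkeeping.
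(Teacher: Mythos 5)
Your ``if'' direction and your ``transfer of retraction'' reduction are both correct, but the heart of the proposition --- the inductive step producing a section of $\mu_{n,W}$ when $W=\mathrm{Fib}(\phi\colon V\to U)$ --- is not proved: you name it yourself as ``the main obstacle'' and then offer only a plan. That plan does not go through as described. The functor $z_k$ does not carry the fibration sequence $W\to V\to U$ to a fibration sequence, so $z_nW$ has no ``GEM factor'' and ``polyGEM factor'' into which $s_U$ and $s_V$ could be spliced; and even granting such a decomposition, the two sections $s_V$ and $s_U$ come with no compatibility with $\phi$ (one would need something like $s_U\circ \mathbb{Z}\phi\simeq \phi\circ s_V$ to induce a map on fibers), so they cannot be assembled. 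This is a genuine gap, not bookkeeping.

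Your second suggested route is essentially the paper's actual argument, but the mechanism is not ``assembling level-wise splittings of a pro-isomorphism on pro-homotopy'' --- a pro-isomorphism on homotopy groups does not by itself produce a space-level section. What the paper does is dualize: by Proposition~\ref{tower} the maps $\mu_{k,W}$ induce a pro-cohomology isomorphism, hence for every fibrant GEM $P$ the maps $\mathrm{map}_*(\mu_{k,W},P)$ form an ind-homotopy equivalence (maps into a GEM are governed by cohomology). The induction is then run on the polyGEM structure of the \emph{target} $P$, not of the source $W$: since $\mathrm{map}_*(-,-)$ turns a fibration sequence of targets into a fibration sequence of mapping spaces and commutes with retracts of the target, the ind-equivalence propagates from GEMs to all fibrant polyGEMs. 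Taking $P$ to be a fibrant replacement of $W$ itself, ind-surjectivity on $\pi_0$ of these mapping spaces says that the identity of $W$ factors up to homotopy through $\mu_{n,W}$ for some $n$, which is exactly the desired retraction. The reason this works where your splicing does not is that mapping spaces convert the fiber-sequence structure of a polyGEM target into fiber sequences, whereas applying $z_k$ to a fibration of sources gives no such control.
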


\begin{proof}
Since $z_nW$ is a polyGEM, by definition so is any of its retracts. That proves one implication.

Assume now that  $W$ is a  polyGEM.
The maps $\mu_{k,W}\colon W\to z_k W $  induce  a pro-cohomology equivalence as we just have seen in Proposition~\ref{tower}.
Thus, for a fibrant GEM $P$, the maps of mapping spaces $\textrm{map}_{\ast}(\mu_{k,W},P)\colon \textrm{map}_{\ast}(z_k W,P) \to
\textrm{map}(W,P)$  induces an ind-homotopy equivalence.
By induction, the same holds for any fibrant polyGEM~$P$. To obtain the desired retraction, we can now use this ind-homotopy equivalence
when $P$ is a fibrant replacement of~$W$.
\end{proof}

We conclude this section with its main result.

\begin{prop}\label{prop twistB-cellular}
For all $k\geq 0 $,  the co-augmented functor  $z_k$ is  cellular. In particular $z_kX$ is $X$-cellular for any $X$.
\end{prop}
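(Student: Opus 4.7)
The proof proceeds by induction on $k$. The base case $k=0$ is given by Example~\ref{ex:cellularfunctor}: the Hurewicz map $\mu_{0,X} = h_X\colon X \to \mathbb{Z}X$ is cellular.

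For the inductive step, suppose $z_k$ is cellular. By Corollary~\ref{cor overlinecellular}, the derived functor $\overline{z_k}$ is then cellular as well. Inspection of diagram~$(\ast)$ factors the co-augmentation as
\[
X \xrightarrow{\overline{\mu_{k,X}}} \overline{z_k}X \xrightarrow{\psi} z_{k+1}X,
\]
where, writing $\alpha\colon z_k X \to \text{Cof}(\mu_{k,X})$ and $h$ for the Hurewicz map on $\text{Cof}(\mu_{k,X})$, one has $\overline{z_k}X = \text{Fib}(\alpha)$, $z_{k+1}X = \text{Fib}(h\alpha)$, and $\psi$ is the natural map of fibers of a composite. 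The three-term fiber sequence of a composition,
\[
\overline{z_k}X \xrightarrow{\psi} z_{k+1}X \to F, \qquad F := \text{Fib}(h),
\]
identifies $\text{Fib}(\psi) \simeq \Omega F$.

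The crux is the estimate $F > S^2 \wedge X$ for connected $X$. Condition (a) of Definition~\ref{cellular map} for the inductively cellular $z_k$ gives $\text{Cof}(\mu_{k,X}) > S^1 \wedge X$, and applying condition (b) for the cellular Hurewicz map to $Y = \text{Cof}(\mu_{k,X})$ with $A = X$ delivers $F > S^2 \wedge X$. Combined with Propositions~\ref{prop basicmoves1}.(1)--(2) and~\ref{prop basicmoves2}.(5), this produces $\text{Cof}(\psi) \gg S^1 \wedge \Omega F > S^1 \wedge X$; and if moreover $X > S^1 \wedge A$ with $A$ connected, then $F > S^2 \wedge X > S^3 \wedge A$, so $\text{Fib}(\psi) = \Omega F > S^2 \wedge A$.

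Conditions (a) and (b) of Definition~\ref{cellular map} for $\mu_{k+1,X}$ now follow from these estimates, together with the corresponding estimates for $\overline{\mu_{k,X}}$ supplied by cellularity of $\overline{z_k}$, via the cofiber sequence $\text{Cof}(\overline{\mu_{k,X}}) \to \text{Cof}(\mu_{k+1,X}) \to \text{Cof}(\psi)$ and the fiber sequence $\text{Fib}(\overline{\mu_{k,X}}) \to \text{Fib}(\mu_{k+1,X}) \to \text{Fib}(\psi)$, closing the acyclic classes under extensions by Proposition~\ref{prop basicmoves3}.(8). The ``in particular'' statement is then immediate from Corollary~\ref{col:skeletal}. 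The main subtlety will be the dimensional bookkeeping in the acyclicity inequalities and the correct identification of $\text{Fib}(\psi) \simeq \Omega F$ from the comparison of fibers of a composite.
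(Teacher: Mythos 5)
Your proposal is correct and follows essentially the same route as the paper: induction on $k$, factoring $\mu_{k+1,X}$ through $\overline{z_k}X$, identifying $\mathrm{Fib}(\psi)$ with the loop space of the fiber of the Hurewicz map on $\mathrm{Cof}(\mu_{k,X})$, estimating that fiber via conditions (a)--(b) of cellularity, and closing with the extension property of acyclic classes. The map you call $\psi$ is the paper's $\beta$, and your dimensional bookkeeping matches the paper's.
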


\begin{proof}
The proof is by induction on $k$.
The cellularity of  $z_0=\mathbb{Z}$ was already discussed in Section~\ref{sec Bousfield}.
Assume that $k>0$. Let us denote  by $\beta\colon \overline{z}_k X\to
z_{k+1} X$ the left vertical map in the diagram (\ref{diagram def}). This map fits into a commutative triangle:
\[\xymatrix{
X\drto^{\overline{\mu_{k,X}}}\ar@/_18pt/[ddr]_{\mu_{k+1,X}} \\
& \overline{z_k}X\dto^{\beta}\\
& z_{k+1} X
}\]
which exhibits $\mu_{k+1,X}$ as a composition of two maps,  yielding both a cofibration and a fibration sequence:
\[
\text{Cof}(\overline{\mu_{k,X}})\to \text{Cof}(\mu_{k+1,X})\to  \text{Cof}(\beta)\  \  \  \  \  \  \ \
\text{Fib}(\overline{\mu_{k,X}})\to \text{Fib}(\mu_{k+1,X})\to  \text{Fib}(\beta)
\]

Assume now that $X$ is connected. Since the functor $z_k$ is  cellular (by induction),  $\text{Cof}(\mu_{k,X})>S^1\wedge  X$ and
 we get:
\[
\Gamma \text{Cof}(\mu_{k,X}):=  \text{Fib}\left( \text{Cof}(\mu_{k,X})\xrightarrow{h_{\text{Cof}(\mu_{k,X})}}\mathbb{Z}\text{Cof}(\mu_{k,X})\right)>S^2\wedge  X
\]
From the  cellularity of $\mathbb Z$ (part (b) of  Definition~\ref{cellular map}). It follows that:
\[
\text{Fib}\left( \beta\colon \overline{z}_k X\to  z_{k+1} X \right)\simeq \Omega\Gamma \text{Cof}(\mu_{k,X})>\Omega
(S^2\wedge  X)>S^1\wedge X
\]
Consequently $\text{Cof}(\beta)>S^1\wedge \text{Fib}(\beta)>S^2\wedge X$ (see Proposition~\ref{prop basicmoves3}.(8)).
As $\overline{z_k}$ is  cellular (see Corollary~\ref{cor overlinecellular}), we also have $\text{Cof}(\overline{\mu_{k,X}})>S^1\wedge X$.
These last two inequalities imply $\text{Cof}(\mu_{k+1,X})>S^1\wedge X$ which is requirement (a) of Definition~\ref{cellular map}.

 Assume now that $X>S^1\wedge A$ for a connected space $A$. Since  $\overline{z_k}$ is  cellular,
 $\text{Fib}(\overline{\mu_{k,X}})>S^2\wedge A$. We have already seen that  $\text{Fib}(\beta)>S^1\wedge X$, and hence  $\text{Fib}(\beta)>S^2\wedge A$.
 These  inequalities imply  $\text{Fib}(\mu_{k+1,X})>S^2\wedge A$, which is requirement (b) of Definition~\ref{cellular map}.
 This concludes the induction step and the proof of the proposition.
\end{proof}

\section{Cellularity of Postnikov sections}
 Recall that the Postnikov sections and highly connected covers are the most basic occurrences of nullifications and cellular covers:   
 $P_{S^{n+1}}X$ is the $n$-th Postnikov section $P_nX$ and $\text{\rm cell}_{S^{n+1}}X$ is the $n$-connected cover.
We are ready now to prove the main theorem of this article: a space
builds any of its nilpotent Postnikov sections, even if the space itself is not nilpotent. This is in contrast with
highly connected covers  as the following examples illustrates.

\begin{example}\label{ex connectedcover}
In general, an $n$-connected cover $\text{cell}_{S^{n+1}}X$ -- even of  a  nilpotent space $X$ --  is  not  $X$-cellular. Consider
for example  $K({\mathbb Z},2) \vee K({\mathbb Z},2)$ whose cellular class is that of $K({\mathbb Z}, 2)$.
Its $2$-connected cover is $S^3$ and the $3$-sphere is not   $K({\mathbb Z},2)$-cellular, in fact not  $K({\mathbb Z},2)$-acyclic.
To see this  let  $M({\mathbb Z}/2,3)$ be the double  suspension of ${\mathbb R}P^2$. This is a  finite complex
so, by the Sullivan conjecture, \cite{MR750716}:
\[
\text{map}_{\ast}(K({\mathbb Q}/{\mathbb Z},1),M({\mathbb Z}/2,3))\simeq \ast
\]
The space  $K({\mathbb Q}/{\mathbb Z},1)$ is the homotopy fiber of the map $K({\mathbb Z},2)\to K({\mathbb Q},2)$
induced by the inclusion ${\mathbb Z}\subset{\mathbb Q}$. By the above consequence of the Sullivan conjecture we then have:
\[
\text{map}_{\ast}(K({\mathbb Z},2), M({\mathbb Z}/2,3))\simeq \text{map}_{\ast}(K({\mathbb Q},2), M({\mathbb Z}/2,3))
\]
However, since $K({\mathbb Q},2)$ is a rational space, $\text{map}_{\ast}(K({\mathbb Q},2), M({\mathbb Z}/2,3))$ is contractible
and consequently so is
$\text{map}_{\ast}(K({\mathbb Z},2), M({\mathbb Z}/2,3))$.
This shows that $M({\mathbb Z}/2,3)$ can not be $K({\mathbb Z},2)$-acyclic.
As $M({\mathbb Z}/2,3)>S^3$, $S^3$ can not be $K({\mathbb Z},2)$-acyclic
either.
\end{example}

In the proof of our main result  the following cellular and acyclic properties of
polyGEMs play key roles. In  Example~\ref{ex connectedcover} we have seen that in general
 the $n$-connected cover $\text{\rm cell}_{S^{n+1}}X$ of a nilpotent space $X$ can fail to be  $X$-acyclic. This however can not happen when $X$ is a  polyGEM:

\begin{prop}\label{cor main}
Let $W$ be a polyGEM. Then,  for any $n\geq 0$:
\begin{enumerate}
\item$\text{\rm cell}_{S^n}W>W$;
\item  $K(\pi_nW,n)>W$;
\item $P_{n}W\gg W$ and in particular $K(\pi_1W,1)\gg W$;
\item $\overline{\calc(W)}=\overline{\calc({\z}W)}=\overline{\calc(\prod_{k\geq 0}K(\pi_k W,k))}$
\end{enumerate}
\end{prop}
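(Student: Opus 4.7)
The plan is to exploit the identification $\calc(W) = \calc(z_N W)$, which follows since $W$ is a polyGEM and so by Proposition \ref{polyretract} is a retract of $z_N W$, while $z_N W \gg W$ by Proposition \ref{prop twistB-cellular}; the same holds at the level of acyclic classes. All the functorial constructions appearing in the statement (Postnikov sections, connected covers, the spaces $K(\pi_n,n)$, and the functor $\mathbb{Z}$) respect retracts, so throughout I can freely pass between $W$ and the polyGEM $z_N W$.

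I will prove (1) first, by induction on the polyGEM level. When $W$ is a GEM, say $W = \prod_{i\geq 1} K(A_i,i)$, the connected cover $\text{cell}_{S^n}W$ equals the subproduct $\prod_{i\geq n} K(A_i,i)$, which is a retract of $W$, so is $W$-cellular and hence $W$-acyclic. For the inductive step, I present $W$ (up to retract) as $\text{Fib}(f\colon X\to Y)$ with $X$ a lower-level polyGEM and $Y$ a GEM, and study the connected cover of the fiber along this fibration by combining the inductive hypothesis on $X$ with the basic cellular and acyclic inequalities from Section \ref{sec basic}, most importantly Proposition \ref{prop basicmoves3} and Theorem \ref{thm basicmoves}. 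I expect this inductive step to be the central difficulty, since connected covers do not literally commute with taking fibers; controlling the gap between $\text{cell}_{S^n}F$ and the pullback of $\text{cell}_{S^n}X$ along $f$ is what drives the argument.

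Statements (3) and (2) then reduce to (1). For (3), I apply Proposition \ref{prop basicmoves3}.(10) to the fibration $\text{cell}_{S^{n+1}}W \to W \to P_nW$: the fiber is $W$-acyclic by (1) and the total space $W$ is trivially $W$-cellular, so the base $P_nW$ is $W$-cellular. For (2), I observe that $P_nW$ is itself a polyGEM by Proposition \ref{cor nilpotnent}, so (1) applied to $P_nW$ gives $\text{cell}_{S^n}P_nW > P_nW$; since $\text{cell}_{S^n}P_nW \simeq K(\pi_nW,n)$ and $P_nW > W$ by (3) (as cellular implies acyclic), transitivity of the acyclic relation yields $K(\pi_nW,n) > W$.

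For (4), the inclusions $\overline{\calc(\mathbb{Z}W)} \subset \overline{\calc(W)}$ and $\overline{\calc(\prod_k K(\pi_kW,k))} \subset \overline{\calc(W)}$ follow respectively from the cellularity of $\mathbb{Z}$ (so $\mathbb{Z}W \gg W$) and from (2) combined with closure of acyclic classes under fibrations. For the reverse inclusions I use that $W$, being nilpotent, admits a principal Postnikov refinement whose successive fibers are the $K(\pi_kW,k)$, so iterated fibration closure places $W$ in $\overline{\calc(\prod K(\pi_kW,k))}$. The identification $\overline{\calc(W)} = \overline{\calc(\mathbb{Z}W)}$ then follows by working through the modified tower $\{z_kW\}$ and invoking Proposition \ref{tower}, which for polyGEMs provides a pro-isomorphism of pro-homotopy groups and thereby matches the acyclic classes extracted from the two models.
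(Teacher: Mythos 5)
Your reductions of (2) and (3) to (1) are correct and essentially the paper's (the paper obtains (2) from the fibration $\text{cell}_{S^{n+1}}W\to\text{cell}_{S^n}W\to K(\pi_nW,n)$ rather than from (3), but your route via $\text{cell}_{S^n}P_nW\simeq K(\pi_nW,n)$ works equally well). The problem is that (1), which you correctly identify as the heart of the matter, is left as a sketch whose proposed induction cannot close as framed. If $W$ is (a retract of) $\text{Fib}(f\colon X\to Y)$ with $X$ a lower-level polyGEM and $Y$ a GEM, your inductive hypothesis gives $\text{cell}_{S^n}X>X$, i.e.\ acyclicity \emph{relative to $X$}; what you need is acyclicity relative to $W$, and there is no general cellular or acyclic inequality running from the total space $X$ (or the base $Y$) down to the fiber $W$ --- fibers do not build total spaces, and for an arbitrary presentation the GEM $Y$ need bear no cellular relation to $W$ at all (e.g.\ $W=K(\mathbb{Q}/\mathbb{Z},1)\simeq\text{Fib}(K(\z,2)\to K(\mathbb{Q},2))$). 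The paper escapes this by proving a differently quantified statement, Lemma~\ref{lem acycconcofib}: for an \emph{arbitrary} space $X$ one has $\text{cell}_{S^n}z_kX>X$, acyclicity relative to the fixed base $X$ of the tower rather than relative to the $k$-th stage itself. That statement does induct on $k$, precisely because the cellularity of the functors $z_k$ (Proposition~\ref{prop twistB-cellular}) gives $\text{Cof}(\mu_{k-1,X})>S^1\wedge X$ and so ties the GEM $\z\text{Cof}(\mu_{k-1,X})$ back to $X$; even then the inductive step requires a nontrivial comparison of cellular covers along the fibration $z_kX\to z_{k-1}X\to\z\text{Cof}(\mu_{k-1,X})$ and an identification of $\pi_nE$ of the intermediate space with $\text{Ker}\bigl(\pi_nz_{k-1}X\to\pi_n\z\text{Cof}(\mu_{k-1,X})\bigr)$ --- this is exactly the content your sketch defers. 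Part (1) for a polyGEM $W$ then follows by specializing to $X=W$ and using the retraction $W\to z_kW\to W$.

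Your (4) also has gaps in the reverse inclusions. The refined principal Postnikov tower of a nilpotent space has fibers $K(A,k)$ with $A$ a subquotient of $\pi_kW$, not $K(\pi_kW,k)$ itself; and even granting a cellular comparison there, iterated fibration closure only places each finite stage $P_nW$ in $\overline{\calc(\prod_{k}K(\pi_kW,k))}$, whereas $W$ is the homotopy \emph{limit} of these stages and acyclic classes are not closed under homotopy limits. Likewise, invoking Proposition~\ref{tower} does not by itself convert a pro-homotopy isomorphism into an equality of acyclic classes. The paper instead proves $z_kW>\z W$ for every space by a direct induction using the fibration $\Omega\z\text{Cof}(\mu_{k-1,W})\to z_kW\to z_{k-1}W$, deduces $W\gg z_kW>\z W\gg W$ from the retraction, and obtains $\z W>\prod_{k}K(\pi_kW,k)$ from the chain $K(H_nW,n)\gg\z(P_nW)\gg P_nW>\prod_{k}K(\pi_kW,k)$, i.e.\ from (2) and (3) rather than from a Postnikov limit argument.
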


\begin{lemma}\label{lem acycconcofib}
Let $X$ be a space. For any $k\geq 0$ and   $n\geq 0$,    $\text{\rm cell}_{S^{n}} {z}_{k}X>X$.
\end{lemma}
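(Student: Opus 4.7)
The plan is to derive the lemma as a formal consequence of Proposition~\ref{cor main}.(1) and Proposition~\ref{prop twistB-cellular}, combined by the transitivity of the acyclic relation~$>$.

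First, $z_k X$ is itself a $(k+1)$-polyGEM. This is immediate from the inductive construction of the modified tower: $z_0 X = \mathbb{Z} X$ is a GEM, and each $z_{k+1} X$ is by definition the homotopy fiber of a map from the $k$-polyGEM $z_k X$ to the GEM $\mathbb{Z}\,\text{Cof}(\mu_{k,X})$, as noted immediately after diagram~(\ref{diagram def}). Consequently Proposition~\ref{cor main}.(1), applied to $W = z_k X$, yields $\text{cell}_{S^n} z_k X > z_k X$ for every $n \geq 0$.

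Second, by Proposition~\ref{prop twistB-cellular}, $z_k X \gg X$ for any space $X$, so in particular $z_k X > X$ since every cellular class is contained in the corresponding acyclic class. Transitivity of $>$ is then all that is needed: if $A > B$ then $A \in \overline{\calc(B)}$, and since $\overline{\calc(B)}$ is an acyclic class containing $A$, minimality of $\overline{\calc(A)}$ forces $\overline{\calc(A)} \subseteq \overline{\calc(B)}$, so $Y > A$ implies $Y > B$. Chaining $\text{cell}_{S^n} z_k X > z_k X > X$ produces the desired inequality.

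Once Proposition~\ref{cor main}.(1) is in hand the argument is essentially formal and I foresee no substantive obstacle. The only organizational point worth flagging is that Proposition~\ref{cor main}.(1) should be argued independently of this lemma in order to avoid circularity; this is plausible via a separate induction on polyGEM depth whose base case exploits the splitting $G \simeq P_{n-1} G \times \text{cell}_{S^n} G$ available for any GEM~$G$, which identifies the $(n-1)$-connected cover as a direct factor of $G$, hence a retract, hence in the acyclic class generated by~$G$.
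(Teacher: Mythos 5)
There is a genuine circularity problem here that your proposal flags but does not resolve. In the paper's logical architecture, Proposition~\ref{cor main}.(1) is \emph{deduced from} Lemma~\ref{lem acycconcofib}: a polyGEM $W$ is a retract of $z_kW$ (Proposition~\ref{polyretract}), hence $\text{cell}_{S^n}W$ is a retract of $\text{cell}_{S^n}z_kW$, and the Lemma supplies $\text{cell}_{S^n}z_kW>W$. So invoking~\ref{cor main}.(1) to prove the Lemma inverts the intended order, and the entire burden of your argument falls on the promised ``independent'' proof of~\ref{cor main}.(1). Your sketch of that proof covers only the base case (a GEM splits as a product, so its connected covers are retracts of it), which is essentially how the paper handles $k=0$; the inductive step, which is where all the content lies, is not addressed. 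It faces two concrete obstructions. First, the retract clause in the definition of a polyGEM defeats the naive induction: if $W$ is a retract of $V$ and you know $\text{cell}_{S^n}V>V$, functoriality gives $\text{cell}_{S^n}W\gg\text{cell}_{S^n}V$ and hence $\text{cell}_{S^n}W>V$, which is acyclicity over $V$, not over its retract $W$ (there is no implication $V>W$ in general --- a point is a retract of anything). Second, the fibration step, passing from $P$ and a GEM $G$ to $\text{Fib}(P\to G)$, requires comparing connected covers across a fibration sequence; this is exactly the delicate analysis (the cellular-cover diagram, the identification of $\pi_nE$ with a kernel, the acyclicity of $K(\text{Ker}(f),n)$) that constitutes the paper's actual proof of the Lemma, and it is not formal.

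It is also worth noting why the Lemma is stated with the inequality $\text{cell}_{S^n}z_kX>X$ rather than $\text{cell}_{S^n}z_kX>z_kX$: the induction along the tower is powered by $\text{Cof}(\mu_{k-1,X})>S^1\wedge X$, i.e.\ by acyclicity measured against the base $X$ of the tower, and $z_kX\gg X$ points the wrong way to convert statements about $X$ into statements about $z_kX$. So the ``relative to $X$'' formulation is not a cosmetic strengthening that transitivity recovers from your version; it is the form the induction needs. The parts of your proposal that are sound --- $z_kX$ is a $(k{+}1)$-polyGEM, $z_kX\gg X$ from Proposition~\ref{prop twistB-cellular}, and the transitivity of $>$ --- are all correct, but they reduce the Lemma to an unproven statement rather than proving it.
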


\begin{proof}
If $X$ is not connected, then the lemma is clear as all spaces are $X$-acyclic. Assume $X$ is connected.
The proof is by  induction  on $k$.
For $k=0$, the space $ {z}_{0}X={\z}X$ is a GEM. Thus for any $n\geq 0$,
$\text{cell}_{S^{n}}{\z}X$ is a retract of ${\z}X$ and  since ${\z}X$ is $X$-cellular,
then so is $\text{cell}_{S^{n}}{\z}X$.

Assume $k>0$. Since $X$ is connected, then so is $z_kX$ and hence $\text{cell}_{S^{0}}z_kX$ and
$\text{cell}_{S^{1}}z_kX$  are weakly equivalent to $z_kX$, which is $X$-acyclic (even cellular) by
Proposition~\ref{prop twistB-cellular}.
Assume $n\geq 2$ and form the following commutative diagram where the horizontal sequences are fibration sequences
and the left and right vertical maps are cellular covers (see~\cite[Theorem E2]{MR98f:55010}):
\begin{equation}\label{diagramcovers}
\xymatrix{
\text{cell}_{S^n} {z}_{k}X\dto\rto & E\dto\rto &  \text{cell}_{S^{n+1}}{\z}\text{Cof}(\mu_{k-1,X})\dto\\
 {z}_{k}X\rto^-{q_k} &  {z}_{k-1}X\rto &{\z}\text{Cof}(\mu_{k-1,X})
}\tag{$\ast\ast$}
\end{equation}
Explicitly, the space $E$ is the $(n-1)$-connected cover of the homotopy pull-back  of the right hand side pull-back
diagram.
Since ${z}_{k-1}$ is a  cellular functor (see Proposition~\ref{prop twistB-cellular}),  $\text{Cof}(\mu_{k-1,X})>S^1\wedge X$.
This, together with  the case  $k=0$, gives
$\text{cell}_{S^{n+1}}{\z}\text{Cof}(\mu_{k-1,X})>\text{Cof}(\mu_{k-1,X})>
S^1\wedge X$ or equivalently by Proposition~\ref{prop basicmoves1}.(2)
$\Omega \text{cell}_{S^{n+1}}{\z}\text{Cof}(\mu_{k-1,X})> X$.
To show  $\text{cell}_{S^n} {z}_{k}X>X$ it is therefore enough to prove
 $E>X$ (see Proposition~\ref{prop basicmoves3}.(8)).

The space $E$ is $(n-1)$-connected and the map $E \to z_{k-1} X$ induces an isomorphism
on homotopy group $\pi_i$ for $i \geq n+1$. We have thus a fibration sequence:
\[
\text{cell}_{S^{n+1}} {z}_{k-1}X \to E\to K(\pi_n E,n)
\]
By the inductive assumption  $\text{cell}_{S^{n+1}} {z}_{k-1}X>X$. The inequality
$E>X$ will then follow once we show  $K(\pi_n E,n)>X$.

Let $G=\pi_{n} {z}_{k-1}X$, $H=\pi_{n}{\z}\text{Cof}(\mu_{k-1,X})$ and $f\colon G\to H$ be the group
homomorphism induced on $\pi_n$ by the map $ {z}_{k-1}(X)\to {\z}\text{Cof}(\mu_{k-1,X})$.
By the inductive assumption  $\text{cell}_{S^{n}} {z}_{k-1}X>X$ and
 $\text{cell}_{S^{n+1}} {z}_{k-1}X>X$. These spaces fit into the following fibration sequence:
\[
\text{cell}_{S^{n+1}} {z}_{k-1}X\to \text{cell}_{S^{n}} {z}_{k-1}X\to K(G,n) \, ,
\]
It follows that $K(G,n)>X$. As $K(H,n)$ is a retract of  ${\z}\text{Cof}(\mu_{k-1,X})$,
we also have  $K(H,n)> \text{Cof}(\mu_{k-1,X})>S^1\wedge X$.
These inequalities imply:
\[
K(\text{Ker}(f),n)\times K(\text{Coker}(f),n-1) \simeq \text{Fib}\left(K(f, n)\colon K(G,n)\to K(H,n)\right)> X.
\]
Hence, as a retract of $\text{Fib}\left(K(f, n)\right)$, the space $K(\text{Ker}(f),n)$ is also $X$-acyclic.
The long exact sequences in homotopy for the fibrations in the above diagram (\ref{diagramcovers}) allow us to identify
$\pi_n E$ with $\text{Ker}\left(f\colon G\to H \right)$. We conclude that $K(\pi_n E,n)>X$.
\end{proof}

\begin{proof}[Proof of Proposition~\ref{cor main}]
If $W$ is not connected, then all the four statements are clear. Assume then that $W$ is connected.

\medskip

\noindent
(1):\quad  Since $W$ is a polyGEM, Proposition~\ref{polyretract} implies that it is a retract of $ {z}_{k}W$ for some~$k$.
By functoriality, $\text{cell}_{S^n}W$ is then a retract of $\text{cell}_{S^n} {z}_{k}W$
and we conclude by Lemma~\ref{lem acycconcofib} that $\text{cell}_{S^n}W>W$.
\medskip

\noindent
(2):\quad  This is a consequence of (1) and the fact that we have a fibration sequence:
\[
\text{cell}_{S^{n+1}}W\to \text{cell}_{S^n}W\to K(\pi_nW,n)
\]

\noindent
(3):\quad  For $n = 0$ the result is immediate as
$P_{0}W$ is a retract of $W$. Let $n\geq 1$. In this case
the statement follows from (1) and Proposition~\ref{prop basicmoves3}.(10) applied to the fibration sequence $\text{cell}_{S^{n+1}}W\to W\to P_{n}W$.
\medskip

\noindent
(4):\quad   We start by showing by induction that $ {z}_{k}W>{\z}W$.
For $k=0$ there is nothing to prove. Assume that $k\geq1$.
Recall that $ {z}_{k}W$ fits into a fibration sequence:
\[
\Omega {\z}\text{Cof}(\mu_{k-1,W})\to {z}_{k}W\to  {z}_{k-1}W
\]

As $ {z}_{k-1}$ is  cellular (see Proposition~\ref{prop twistB-cellular}), $\text{Cof}(\mu_{k-1,W}) >S^1\wedge W$
and hence ${\z}\text{Cof}(\mu_{k-1,W}) >{\z}(S^1\wedge W)$ which implies  $\Omega {\z}\text{Cof}(\mu_{k-1,W})>{\z}W$. By induction we also have
 ${z}_{k-1}W>{\z}W$. We can then conclude ${z}_{k}W>{\z}W$.
 Note that in the above argument we did not use the assumption that $W$ is a polyGEM and hence this acyclic inequality is true for an arbitrary space.
 However in the case $W$ is a polyGEM, there is an integer $k$ for which $W$ is a retract of ${z}_{k}W$.
 For such a $k$ we have  then the following relations
$W\gg  {z}_{k}W>{\z}W\gg W$ which proves the equality $\overline{\calc(W)}=\overline{\calc({\z}W)}$.

The inequality $ \prod K(\pi_k W,k))> W$ follows from  statement (2).
To conclude that the  acyclic classes
$\overline{\calc(W)}$ and $\overline{\calc(\prod K(\pi_k W,k))}$ coincide what remains is the proof of  the relation ${\z}W>\prod K(\pi_kW,k)$.
For any $n \geq 0$, the inequality $K(H_n(W),n)\gg {\z}(P_{n}W)$ holds
since $W$ and $P_{n}W$ have isomorphic  $n$-th integral homology groups by the Whitehead Theorem.
Therefore:
\[
K(H_n(W),n)\gg {\z}(P_{n}W) \gg P_{n}W>\prod_{k\geq 0} K(\pi_kW,k)
\]
This implies  that ${\z}W>\prod_{ k\geq 0} K(\pi_kW,k)$.
\end{proof}

\begin{thm}\label{thm main}
Let $X$ be a space. If $P_{n}X$
 is nilpotent, then $P_{n}X\gg X$.
\end{thm}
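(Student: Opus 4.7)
The plan is to bootstrap from the polyGEM case, Proposition~\ref{cor main}.(3), to general $X$ using the modified Bousfield--Kan tower. Since $P_n X$ is nilpotent, Proposition~\ref{cor nilpotnent} implies that $P_n X$ is a polyGEM, and by the detection principle of Proposition~\ref{polyretract} it is a retract of $z_m(P_n X)$ for some $m\geq 0$. Applying the functor $P_n$ to this retraction exhibits $P_n X=P_n P_n X$ as a retract of $P_n z_m(P_n X)$, so it suffices to show that $P_n z_m(P_n X)$ is $X$-cellular.

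First I would observe that $P_n z_m X$ is itself $X$-cellular. Indeed, consider the Postnikov fibration sequence
\[
\text{\rm cell}_{S^{n+1}}(z_m X)\to z_m X\to P_n z_m X.
\]
Lemma~\ref{lem acycconcofib} provides $\text{\rm cell}_{S^{n+1}}(z_m X)>X$, and $z_m X\gg X$ by Proposition~\ref{prop twistB-cellular}, so Proposition~\ref{prop basicmoves3}.(10) yields $P_n z_m X\gg X$.

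The decisive step is a bridge equivalence: the map $z_m f\colon z_m X\to z_m(P_n X)$ induced by $f\colon X\to P_n X$ should become a weak equivalence after Postnikov truncation, namely $P_n z_m X\simeq P_n z_m(P_n X)$. Granted this bridge, $P_n X$ is a retract of $P_n z_m(P_n X)\simeq P_n z_m X$, and hence inherits $X$-cellularity, completing the proof.

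The bridge amounts to showing that $z_m f$ induces isomorphisms on $\pi_i$ for every $i\leq n$. For $m=0$ this reduces to $H_i f$ being an isomorphism for $i\leq n$, which follows from the Serre spectral sequence applied to the fibration $\text{\rm Fib}(f)\to X\to P_n X$: the fiber is $n$-connected and the base is nilpotent, so the spectral sequence collapses enough in the range $p+q\leq n$ to give the result (and surjection on $H_{n+1}$). The inductive step is the principal obstacle: one would use the defining fibration $z_m Y=\text{\rm Fib}(z_{m-1}Y\to \mathbb{Z}\,\text{\rm Cof}(\mu_{m-1,Y}))$ and a five-lemma on the long exact sequences in homotopy, but a naive argument threatens to lose a degree of equivalence at each level. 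Controlling this requires exploiting the nilpotency of the target together with the high connectivity of $\text{\rm Cof}(\mu_{m-1,-})$ (which is $\Sigma X$-acyclic by the cellular property of $z_{m-1}$ established in Proposition~\ref{prop twistB-cellular}) to propagate the isomorphism on $\pi_{\leq n}$ through the tower.
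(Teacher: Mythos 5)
Your overall architecture is close to the paper's: realize $P_nX$ as a retract of a Postnikov truncation of a stage of the modified tower, and separately establish $P_n z_m X\gg X$ (your argument for the latter, via Lemma~\ref{lem acycconcofib}, Proposition~\ref{prop twistB-cellular} and Proposition~\ref{prop basicmoves3}.(10), is correct). The gap is the ``bridge equivalence'' $P_n z_m X\simeq P_n z_m(P_nX)$, which you explicitly leave unproved: your sketch handles $m=0$, but you acknowledge that the inductive step ``threatens to lose a degree of equivalence at each level'' and only gesture at how to control this. As stated this is a genuine hole, and it is also a much stronger claim than what is needed. To propagate an isomorphism on $\pi_{\leq n}$ from $z_{m-1}$ to $z_m$ you would have to show that $\mathbb{Z}\,\text{\rm Cof}(\mu_{m-1,X})\to\mathbb{Z}\,\text{\rm Cof}(\mu_{m-1,P_nX})$ is an isomorphism on homotopy in degrees $\leq n+1$, i.e.\ an integral homology isomorphism of the cofibers through degree $n+1$; this requires surjectivity data in degree $n+1$ at every stage that your induction does not carry, and nothing in the paper's toolkit delivers it.

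What you are missing is that no weak equivalence is needed, only a retraction of $P_nX$ off $P_n z_m X$, and that comes for free from universal properties. The composite $X\to z_mX\to P_n z_m X$ lands in an $n$-Postnikov stage, hence factors up to homotopy through $p\colon X\to P_nX$, yielding $s\colon P_nX\to P_n z_m X$. Compose $s$ with $P_n(z_m p)\colon P_n z_mX\to P_n z_m (P_nX)$ and then with the map $P_n z_m(P_nX)\to W$ induced by $r\colon z_m(P_nX)\to W$ (which exists again by the universal property, since $W\simeq P_nX$ is an $n$-Postnikov stage). Restricting this composite along $p$ gives, by naturality of $\mu_m$, the map $r\circ\mu_{m,P_nX}\circ p$, i.e.\ $p$ followed by a weak equivalence; since maps out of $P_nX$ into $n$-Postnikov stages are determined up to homotopy by their restriction along $p$, the composite $P_nX\to W$ is a weak equivalence. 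Hence $P_nX$ is a retract of $P_n z_m X\gg X$ and you are done. This is exactly the paper's argument; your bridge equivalence, even if true, is an unnecessary detour whose proof you have not supplied.
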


\begin{proof}
If $X$ is not connected, then the conclusion is clear.  Assume thus  that $X$ is connected
and  $P_{n}X$ is nilpotent, which by Proposition~\ref{cor nilpotnent} means that   $P_{n}X$ is a polyGEM. Proposition \ref{polyretract} implies the existence of an integer $k$ and of a map $r\colon  {z}_{k}P_n X\to W$
into a fibrant space $W$ such that the composition with the augmentation ${\mu}_{k,P_n X} \colon P_n X\to  {z}_{k}P_n X$ is a weak equivalence.
We can then form the following  diagram which is commutative if we remove the dotted arrows
and in which the symbol $p$ denotes the relevant   functorial $n$-th Postnikov section maps:
\[\xymatrix@C=30pt{
X\rrto^{p}\dto_{{\eta_X}} & & P_nX\dto^{\eta_{P_n X}}  \ar@{.>}@/_15pt/[ddll] \ar@/^13pt/[rd]^{\sim}\\
  {z}_{k}X\rrto|(.28)\hole^(.65){  {z}_{k}p}\dto_{p_{  {z}_{k}X}} &  &  {z}_{k}P_nX
 \dto^{p}\rto^-{r} & W\\
P_n  {z}_{k}X\rrto^{P_n {z}_{k}p} & &P_n {z}_{k}P_nX \ar@{.>}@/_13pt/[ru]
}\]
The maps represented by the dotted arrows, which make the entire diagram homotopy commutative, exist by the universal property of the Postnikov sections. We can therefore conclude that $P_nX$ is a retract of
$P_n  {z}_{k}X$. We thus have the following cellular inequalities:
\[
P_nX\gg P_n  {z}_{k}X \gg  {z}_{k}X \gg X\]
where the second cellular inequality  follows from Proposition~\ref{cor main}.(3) together with the fact that $z_k X$ is
a polyGEM and the last one is given in Proposition~\ref{prop twistB-cellular}.
\end{proof}

The following is a particular case of Theorem~\ref{thm main}, for $n=1$. Hints that this result could hold motivated
the present work.

\begin{cor}\label{cor nilpotnentfundgr}
If  $\pi_1(X)$ is  nilpotent, then $K(\pi_1X,1)\gg X$. \hfill{\qed}
\end{cor}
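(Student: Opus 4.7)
The plan is to derive this corollary as a direct specialization of Theorem~\ref{thm main} at $n=1$. First I would dispose of the trivial case: if $X$ is not connected then every pointed space is vacuously $X$-cellular (as already observed in the proof of Theorem~\ref{thm main}), so we may assume $X$ is connected.

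Under this assumption, Example~\ref{ex:Postnikov} identifies the first Postnikov section $P_1 X$ with the Eilenberg--Mac Lane space $K(\pi_1 X, 1)$, since a connected $1$-Postnikov stage has $\pi_i = 0$ for $i \geq 2$ and is therefore determined up to weak equivalence by its fundamental group. The next step is to check the nilpotency hypothesis of Theorem~\ref{thm main}. The space $K(\pi_1 X, 1)$ has no higher homotopy, so the only requirement for nilpotency is that $\pi_1 X$ act nilpotently on itself by conjugation; this holds exactly when $\pi_1 X$ is a nilpotent group, which is granted by hypothesis.

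With these identifications in place, Theorem~\ref{thm main} with $n = 1$ gives
\[
K(\pi_1 X, 1) \simeq P_1 X \gg X,
\]
which is the desired inequality. There is no genuine obstacle to overcome here: the corollary is flagged in the excerpt as precisely the motivating special case of Theorem~\ref{thm main}, and the argument reduces to the two bookkeeping steps above, namely recognizing $P_1 X$ as an Eilenberg--Mac Lane space and translating the group-theoretic hypothesis on $\pi_1 X$ into the topological nilpotency hypothesis on $P_1 X$.
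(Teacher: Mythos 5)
Your argument is correct and is exactly the paper's intended one: the corollary is stated with a \qed precisely because it is the $n=1$ specialization of Theorem~\ref{thm main}, using that $P_1X\simeq K(\pi_1X,1)$ is nilpotent exactly when $\pi_1X$ is a nilpotent group. Your two bookkeeping steps (including the trivial disconnected case) match the paper's treatment.
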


We can also use Theorem~\ref{thm main} to get a Serre class-type statement that describes a global relation between
the integral homology groups and the homotopy groups of a nilpotent space. No spectral sequence is needed in
our proof, even though it seems that one could also obtain the mutual acyclicity of the homotopy and homology groups
by a spectral sequence argument.

\begin{cor}
If $X$ is  nilpotent, then $\overline{\calc\left({\z}(X)\right)}=\overline{\calc(\prod_{k\geq 0}K(\pi_k X,k))}$.
\end{cor}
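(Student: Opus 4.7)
My plan is to bootstrap from the polyGEM case established in Proposition~\ref{cor main}.(4) to the nilpotent case via the Postnikov tower. By Proposition~\ref{cor nilpotnent}, each Postnikov section $P_nX$ of the nilpotent space $X$ is a polyGEM, so Proposition~\ref{cor main}.(4) yields the finite-stage identity
\[
\overline{\calc(\z P_nX)} \;=\; \overline{\calc\Bigl(\prod_{k=0}^n K(\pi_kX,k)\Bigr)}
\]
for all $n\ge 0$. Since $\z X\simeq\prod_kK(H_kX,k)$ and $\prod_kK(\pi_kX,k)$ are themselves GEMs, their acyclic classes coincide respectively with those generated by the families $\{K(H_kX,k)\}_k$ and $\{K(\pi_kX,k)\}_k$, so the claim reduces to showing that these two families generate the same acyclic class.

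For the inclusion $\overline{\calc(\z X)}\subseteq\overline{\calc(\prod_kK(\pi_kX,k))}$, I will use Whitehead's theorem to identify $K(H_kX,k)$ with $K(H_kP_kX,k)$, which is a retract of the GEM $\z P_kX$. The finite-stage identity at $n=k$, combined with the fact that $\prod_{j\le k}K(\pi_jX,j)$ is itself a retract of the full product $\prod_jK(\pi_jX,j)$, places each $K(H_kX,k)$ in $\overline{\calc(\prod_jK(\pi_jX,j))}$; the product $\z X$ then lands in this acyclic class by the same countable-product argument that concludes the proof of Proposition~\ref{cor main}.(4). For the reverse inclusion, each $K(\pi_kX,k)$ is a retract of $\prod_{j\le k}K(\pi_jX,j)$, hence lies in $\overline{\calc(\z P_kX)}$; to land inside $\overline{\calc(\z X)}$ I would show $\z P_kX\in\overline{\calc(\z X)}$: its low-degree factors $K(H_jX,j)$ for $j\le k$ are retracts of $\z X$, while the high-degree tail $\prod_{j>k}K(H_jP_kX,j)$ is controlled by reapplying Proposition~\ref{cor main}.(4) to the polyGEM $P_kX$ together with the cellular inequality $K(A,m)\gg K(A,n)$ for $m\ge n$, which follows from the bar-construction fibration $K(A,n)\to\ast\to K(A,n+1)$ via Proposition~\ref{prop basicmoves3}.(10).

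The hard part will be the product-closure step used in both directions: establishing that a countable product of Eilenberg--MacLane spaces lies in an acyclic class once each factor does. This is the same delicate technical ingredient already needed in the proof of Proposition~\ref{cor main}.(4), and together with the tail analysis of $\z P_kX$ it is the heart of the argument.
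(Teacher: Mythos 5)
Your first inclusion is sound: identifying $K(H_kX,k)$ with a retract of $\z P_kX$ and invoking Proposition~\ref{cor main}.(4) for the polyGEM $P_kX$ does place each $K(H_kX,k)$, and hence $\z X$, in $\overline{\calc(\prod_j K(\pi_jX,j))}$. (The paper gets this direction for arbitrary $X$, without nilpotency, from $K(H_nX,n)\gg \z P_nX\gg P_nX>\prod_k K(\pi_kX,k)$, the last inequality coming from the Postnikov fibrations.) The product-closure step you single out as delicate is in fact routine and is used silently in the paper: acyclic classes are closed under finite products because $A\to A\times B\to B$ is a fibration sequence, and the full product is the telescope of the finite sub-products since the connectivity of the omitted factors tends to infinity.

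The reverse inclusion, however, has a genuine gap. You reduce it to showing $\z P_kX\in\overline{\calc(\z X)}$ and propose to handle the tail $\prod_{j>k}K(H_jP_kX,j)$ by ``reapplying Proposition~\ref{cor main}.(4) to $P_kX$''. But that proposition only places this tail in $\overline{\calc\bigl(\prod_{j\le k}K(\pi_jX,j)\bigr)}$, and putting $\prod_{j\le k}K(\pi_jX,j)$ inside $\overline{\calc(\z X)}$ is precisely the inclusion you are trying to prove, so the argument is circular; nor does $K(A,m)\gg K(A,n)$ help, since the groups $H_jP_kX$ for $j>k$ are not homology or homotopy groups of $X$ in any controlled way. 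The step can be repaired in one line: by Theorem~\ref{thm main}, $P_kX\gg X$, and since $\z$ preserves cellular and acyclic inequalities (a fact already used in the proof of Proposition~\ref{cor main}.(4)), $\z P_kX\gg \z X$; no factor-by-factor analysis of $\z P_kX$ is needed. The paper argues in the same spirit but more directly: $K(\pi_nX,n)>P_{n+1}X\gg X$ by Proposition~\ref{cor main}.(2) and Theorem~\ref{thm main}, whence $\z K(\pi_nX,n)>\z X$; for $n\ge 2$ the GEM $K(\pi_nX,n)$ is a retract of $\z K(\pi_nX,n)$, and the case $n=1$ uses the nilpotency of $\pi_1X$.
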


\begin{proof}
If $X$ is not connected, the statement is clear. Assume thus $X$ is connected.
Even without the nilpotency assumption on $X$, for any $n\geq 0$, we have:
\[
K(H_n(X),n)=K(H_n(P_{n}X),n)\gg {\z}(P_{n}X)\gg P_{n}X> \prod_{k\geq 0} K(\pi_kX ,k)
\]
Consequently ${\z}(X)>\prod_{k \geq 0} K(\pi_kX ,k)$. For the opposite inequality, we  need the assumption  $X$ is nilpotent,
which according to Proposition~\ref{cor nilpotnent} is equivalent to
 $P_nX$ being a polyGEM for any $n\geq 0$.  We can then use Theorem \ref{thm main} and Corollary \ref{cor main}(2) to obtain
$
K(\pi_nX,n)>P_{n+1}X\gg X
$ which, for $n>1$,  implies  $K(\pi_nX,n)\gg {\z}(K(\pi_nX,n))>{\z}(X)$.
For $n=1$, since $\pi_1X$ is nilpotent, we also have $K(\pi_1X,1)>{\z}(X)$.
This shows  $\prod_{k \geq 0} K(\pi_kX ,k)>{\z}(X)$.
\end{proof}

\section{Applications}
In this section we state various consequences of Proposition~\ref{prop twistB-cellular} and Theorem~\ref{thm main}.
We start with the preservation of polyGEMS by general cellularizations:

\begin{thm}\label{thm preservationpolygem}
If $X$ is a polyGEM, then so is $\text{\rm cell}_AX$ for any space $A$.
\end{thm}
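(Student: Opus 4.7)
The plan is to invoke the detection principle of Proposition~\ref{polyretract}: it suffices to exhibit $\text{cell}_A X$ as a retract of $z_k(\text{cell}_A X)$ for some~$k$. Since $X$ is a polyGEM, that same proposition supplies an integer $k$ and a map $r\colon z_k X\to W$, where $W$ is a fibrant replacement of $X$, such that the composition $r\circ \mu_{k, X}\colon X\to W$ is a weak equivalence.

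The crucial structural input is that $z_k(\text{cell}_A X)$ is $A$-cellular: Proposition~\ref{prop twistB-cellular} says it is $(\text{cell}_A X)$-cellular, and $\text{cell}_A X$ is $A$-cellular by construction, so transitivity of the ``builds'' relation (i.e.\ $\calc(\text{cell}_A X)\subseteq \calc(A)$) gives the claim. Consequently the composite
\[
\beta\colon z_k(\text{cell}_A X) \xrightarrow{z_k(c_{A, X})} z_k X \xrightarrow{r} W
\]
has an $A$-cellular source. By Theorem~\ref{thm excellul}(A) combined with Theorem~\ref{thm universal cell}, the cellular cover $c_{A, W}\colon \text{cell}_A W\twoheadrightarrow W$ induces a weak equivalence on $\text{map}_\ast(z_k(\text{cell}_A X),-)$, so $\beta$ lifts, uniquely up to homotopy, to a map $\rho\colon z_k(\text{cell}_A X)\to \text{cell}_A W$ with $c_{A, W}\circ \rho\simeq \beta$.

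I claim that the composite $\rho\circ \mu_{k, \text{cell}_A X}\colon \text{cell}_A X\to \text{cell}_A W$ is a weak equivalence; together with $\mu_{k, \text{cell}_A X}$ this exhibits $\text{cell}_A X$ as a retract of $z_k(\text{cell}_A X)$ and hence, by Proposition~\ref{polyretract}, as a polyGEM. To verify the claim I post-compose with $c_{A, W}$: using naturality of the co-augmentation $\mu_k$ and the defining relation for $\rho$ one computes
\[
c_{A, W}\circ \rho\circ \mu_{k, \text{cell}_A X}\simeq r\circ z_k(c_{A, X})\circ \mu_{k, \text{cell}_A X}= r\circ \mu_{k, X}\circ c_{A, X},
\]
which is the fibrant replacement $X\to W$ precomposed with $c_{A, X}$. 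On the other hand, $\text{cell}_A$ applied to the fibrant replacement $X\to W$ is a weak equivalence $\text{cell}_A X\to \text{cell}_A W$ that also covers this same composite via $c_{A, W}$. Uniqueness of the lift therefore forces $\rho\circ \mu_{k, \text{cell}_A X}$ and $\text{cell}_A(X\to W)$ to agree in the homotopy category, so the former is also a weak equivalence.

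The only delicate point I anticipate is this uniqueness-of-lifts step, but it is immediate from the fact that $\text{map}_\ast(\text{cell}_A X, c_{A, W})$ is a weak equivalence (since $\text{cell}_A X$ is $A$-cellular); concretely, any two maps $\text{cell}_A X\rightrightarrows \text{cell}_A W$ that become homotopic after post-composition with $c_{A, W}$ are already homotopic. Everything else in the argument is formal naturality of $\mu_k$, $c_{A,-}$, and $z_k$, together with Proposition~\ref{polyretract}.
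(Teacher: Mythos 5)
Your proof is correct and follows essentially the same route as the paper: reduce to exhibiting $\text{cell}_AX$ as a retract of $z_k\text{cell}_AX$ via Proposition~\ref{polyretract}, use Proposition~\ref{prop twistB-cellular} to see that $z_k\text{cell}_AX$ is $A$-cellular, and then apply the universal property of the cellular cover to lift the retraction of $X$. The only cosmetic difference is that you lift directly into $\text{cell}_AW$ whereas the paper factors the lift through $\text{cell}_Az_kX$ first; the verification that the composite is a weak equivalence is the same naturality argument in both cases.
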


\begin{proof}
Assume   $X$ is a polyGEM. It is thus a retract of $z_n X$ (see~\ref{polyretract}), for some $n$,  and hence we can form
the following  diagram with the indicated maps being weak equivalences and which, if the dotted arrow is removed,  is
commutative by the functoriality of the constructions:
\[
\xymatrix{
\text{cell}_AX\rrto^(.55){\mu_{n,\text{cell}_AX}}\drto|{c_{A,X}} \ar@/^25pt/[rrrr]|{\text{cell}_A\mu_{n,X}}\ar@/^40pt/[rrrrrr]|{\simeq}
&& z_n\text{cell}_AX\ar@{.>}[rr]\drto|{z_n(c_{A,X})} & & \text{cell}_Az_nX\rrto^-{\text{cell}_Ar}\dlto|{c_{A,z_nX}} && \text{cell}_AY\dlto|{c_{A,Y}}\\
&X\rrto^{\mu_{n,X}} \ar@/_15pt/[rrrr]|{\simeq} & &  z_nX\rrto^{r} & & Y
}\]
Since $z_n$ is a  cellular functor (see Proposition~\ref{prop twistB-cellular}), $z_n\text{cell}_A X \gg \text{cell}_A X \gg A$.
We can then use the  universal property of the $A$-cellular cover $c_{A,z_nX}$ to get the  existence of  the  dotted arrow
that makes the  entire diagram homotopy commutative.
Commutativity of this diagram shows that $\text{cell}_A X$ is a retract of $z_n \text{cell}_A X$ and
hence by Proposition~\ref{polyretract}   $\text{cell}_A X$ is a polyGEM.
\end{proof}

An analogous  result  to Theorem~\ref{thm preservationpolygem}  holds also for finite nilpotent  Postnikov stages.
 \begin{cor}\label{cor npostpreserv}
 If $X$ is a  nilpotent $n$-Postnikov stage, then so is $\text{\rm cell}_A X$ for any $A$.
\end{cor}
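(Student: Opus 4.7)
The plan is to combine Theorem~\ref{thm preservationpolygem} with Theorem~\ref{thm main} and a short retraction argument to kill the higher homotopy of $\text{\rm cell}_A X$.

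First, since $X$ is a nilpotent $n$-Postnikov stage we have $X\simeq P_nX$, so Proposition~\ref{cor nilpotnent} shows that $X$ is itself a polyGEM. Applying Theorem~\ref{thm preservationpolygem} yields that $\text{\rm cell}_A X$ is again a polyGEM, in particular nilpotent. Consequently its Postnikov section $P_n(\text{\rm cell}_A X)$ is also nilpotent, so Theorem~\ref{thm main} applies and gives the cellular inequality $P_n(\text{\rm cell}_A X)\gg \text{\rm cell}_A X\gg A$; that is, $P_n(\text{\rm cell}_A X)$ is $A$-cellular.

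Next I would construct a retraction of the Postnikov projection $p\colon \text{\rm cell}_A X\to P_n(\text{\rm cell}_A X)$. Since $X\simeq P_nX$, functoriality of Postnikov sections produces $\psi=P_n(c_{A,X})\colon P_n(\text{\rm cell}_A X)\to P_nX\simeq X$ satisfying $\psi\circ p\simeq c_{A,X}$. Because the source of $\psi$ is $A$-cellular, the universal property in Theorem~\ref{thm excellul} supplies a lift $\phi\colon P_n(\text{\rm cell}_A X)\to \text{\rm cell}_A X$ with $c_{A,X}\circ\phi\simeq\psi$. Then
$$c_{A,X}\circ(\phi\circ p)\simeq \psi\circ p\simeq c_{A,X}\simeq c_{A,X}\circ\text{\rm id}_{\text{\rm cell}_A X},$$
and since $\text{\rm map}_{\ast}(\text{\rm cell}_A X,c_{A,X})$ is a weak equivalence by Theorem~\ref{thm universal cell}, the two endomorphisms $\phi\circ p$ and $\text{\rm id}$ of $\text{\rm cell}_A X$ must be homotopic.

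Finally, this retraction exhibits $\text{\rm cell}_A X$ as a retract of $P_n(\text{\rm cell}_A X)$, so each $\pi_i(\text{\rm cell}_A X)$ is a retract of $\pi_i(P_n(\text{\rm cell}_A X))$, which vanishes for $i>n$. Together with the nilpotency already furnished by Theorem~\ref{thm preservationpolygem}, this shows that $\text{\rm cell}_A X$ is a nilpotent $n$-Postnikov stage. The only delicate point is the final step of the retraction argument, where one must be sure that the uniqueness statement inside the universal property really does force $\phi\circ p\simeq \text{\rm id}$ rather than merely agreement on mapping spaces into $X$; beyond that, the proof is a direct assembly of Theorems~\ref{thm main} and~\ref{thm preservationpolygem}.
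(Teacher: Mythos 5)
Your proof is correct and follows essentially the same route as the paper: polyGEM preservation (Theorem~\ref{thm preservationpolygem}) makes $\text{\rm cell}_A X$ nilpotent, Proposition~\ref{cor main}.(3)/Theorem~\ref{thm main} makes $P_n\text{\rm cell}_A X$ $A$-cellular, and the universal property of the cellular cover produces the lift that splits $p_{n,\text{\rm cell}_AX}$ on homotopy groups. The point you flag as delicate is in fact fine: since $\text{\rm map}_{\ast}(\text{\rm cell}_AX, c_{A,X})$ is a weak equivalence it is injective on $\pi_0$, i.e.\ on pointed homotopy classes of maps out of the $A$-cellular space $\text{\rm cell}_AX$, and this is exactly what forces $\phi\circ p\simeq \text{\rm id}$.
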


\begin{proof}
Assume  $p_{n,X}\colon X\to P_nX$ is a weak equivalence and $P_nX$ is nilpotent.
Consider the following diagram where the indicated maps are weak equivalences and,
if the dotted arrow is removed, it   is commutative by functoriality of the constructions:
\[\xymatrix{
\text{cell}_AX\dto_{c_{A,X}}\rrto_-{p_{n,\text{cell}_AX}} \ar@/^22pt/[rrrr]_{\simeq}^{\text{cell}_Ap_{n,X}} &&
P_n\text{cell}_AX\drto|{P_nc_{A,X}}\ar@{.>}[rr] & & \text{cell}_AP_nX\dlto|{c_{A,P_nX}}   \\
X\ar[rrr]_{\simeq}^{p_{n,X}} & && P_nX
}\]
Since $X$ is a polyGEM (see~\ref{cor nilpotnent}), then so is  $\text{cell}_A X$
by Theorem~\ref{thm preservationpolygem}. We can then use Proposition~\ref{cor main}.(3) to conclude that
$P_n \text{cell}_A X\gg \text{cell}_A X\gg A$. The universal property of the cellular cover
$c_{A,P_nX}$ gives  the existence of the dotted arrow making the above diagram homotopy commutative.
This implies that the map $p_{n,\text{cell}_AX}\colon \text{cell}_AX\to P_n\text{cell}_AX$ induces a monomorphism on all
homotopy groups. As it also induces an epimorphism, it is a weak equivalence and
$\text{cell}_AX$ is an $n$-Postnikov stage.
\end{proof}

Theorem~\ref{thm preservationpolygem} can   be used to give a  description of all cellular covers of the classifying space of a
nilpotent group in terms of the group theoretical covers~\cite{MR2269828},\cite{MR2483285},~\cite{MR2995665}:

\begin{cor}\label{cor orbitBnilgr}
Assume that   $G$ is a nilpotent group and $A$ a connected space. Then
$\text{\rm cell}_AK(G,1)\simeq
K(\text{\rm cell}_{\pi_1A}G,1)$ where $\text{\rm cell}_{\pi_1A}G$ is the group theoretical $\pi_1A$-cellularization of $G$
(see~\cite{MR2269828,MR2483285,MR2995665}).
\end{cor}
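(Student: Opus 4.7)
The plan is to apply Corollary~\ref{cor npostpreserv} to the nilpotent $1$-Postnikov stage $K(G,1)$, from which it follows immediately that $\text{cell}_A K(G,1)$ is again a nilpotent $1$-Postnikov stage. Since $A$ is connected, every space in $\calc(A)$ is connected, so $\text{cell}_A K(G,1)$ is connected and hence weakly equivalent to $K(H,1)$ where I set $H := \pi_1 \text{cell}_A K(G,1)$, a nilpotent group. The cellular cover $c_{A, K(G,1)}$ then induces a natural homomorphism $\psi\colon H \to G$.

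What remains is to identify $\psi$ with the group-theoretical $\pi_1 A$-cellular cover of $G$. Recall that this cover is characterized by two properties: $H$ should be $\pi_1 A$-cellular in the category of groups, and $\psi$ should induce a bijection $\text{Hom}(\pi_1 A, H) \to \text{Hom}(\pi_1 A, G)$. The bijection will follow from Theorem~\ref{thm excellul} once one observes that the components of $\text{map}_\ast(X, K(N,1))$ are contractible for any connected pointed $X$, with $\pi_0 \cong \text{Hom}(\pi_1 X, N)$; then the weak equivalence $\text{map}_\ast(A, c_{A, K(G,1)})$ yields the required bijection on $\pi_0$.

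For the cellularity of $H$ as a group, I would invoke the universal property of Theorem~\ref{thm universal cell}. Given any group homomorphism $\phi\colon N \to M$ inducing a bijection $\text{Hom}(\pi_1 A, \phi)$, the classifying map $K(\phi,1)\colon K(N,1) \to K(M,1)$ induces a weak equivalence on $\text{map}_\ast(A, -)$ by the same contractible-components observation. The $A$-cellularity of $K(H,1)$ then forces $\text{map}_\ast(K(H,1), K(\phi,1))$ to be a weak equivalence, and passing to $\pi_0$ shows that $\text{Hom}(H, \phi)$ is a bijection --- which is precisely the universal property expressing that $H$ is $\pi_1 A$-cellular in the category of groups, in the sense of \cite{MR2269828,MR2483285,MR2995665}. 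The main obstacle to this strategy is verifying this clean translation between $A$-equivalences of spaces and $\pi_1 A$-equivalences of groups under the functor $K(-,1)$; once that dictionary is in place, the substantive topological input is really just the preservation of nilpotent Postnikov stages by $\text{cell}_A$ provided by Corollary~\ref{cor npostpreserv}.
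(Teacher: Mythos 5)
Your proposal is correct and shares the paper's overall skeleton: both start by applying Corollary~\ref{cor npostpreserv} to conclude that $\text{cell}_A K(G,1)\simeq K(H,1)$ for a nilpotent group $H$, and both extract the bijection $\text{Hom}(\pi_1A,H)\to\text{Hom}(\pi_1A,G)$ from the weak equivalence $\text{map}_\ast(A,c_{A,K(G,1)})$ together with the observation that $\text{map}_\ast(X,K(N,1))$ is homotopically discrete with $\pi_0\cong\text{Hom}(\pi_1X,N)$. Where you genuinely diverge is in proving that $H$ is $\pi_1A$-cellular as a group. The paper argues by ``building up'': via the Seifert--van Kampen theorem, the collection of connected spaces with $\pi_1A$-cellular fundamental group is a cellular class containing $A$, hence contains $\mathcal{C}(A)$ and in particular $K(H,1)$. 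You instead argue by orthogonality: every $\text{Hom}(\pi_1A,-)$-equivalence $\phi$ of groups yields an $\text{map}_\ast(A,-)$-equivalence $K(\phi,1)$, so Theorem~\ref{thm universal cell} forces $\text{Hom}(H,\phi)$ to be bijective. This is valid, but note that to conclude cellularity of $H$ you must invoke the group-theoretic analogue of Theorem~\ref{thm universal cell} --- that a group orthogonal to all $\text{Hom}(\pi_1A,-)$-equivalences is $\pi_1A$-cellular (e.g.\ by applying orthogonality to $\text{cell}_{\pi_1A}H\to H$ and using closure of cellular classes of groups under retracts). That equivalence between the orthogonality characterization and the colimit-closure definition is a genuine theorem in the cited group-theoretic references, not a tautology, so it should be cited explicitly. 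In exchange, your route avoids van Kampen and the verification that the relevant class of spaces is closed under homotopy colimits; the paper's route stays entirely inside the ``closure under colimits'' picture and needs no orthogonality input on the group side. Both are complete proofs.
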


\begin{proof}
Corollary~\ref{cor npostpreserv} implies that $\text{\rm cell}_AK(G,1)\simeq K(H,1)$  where $H$ is a nilpotent group. Furthermore we claim
that since $K(H,1)$ is $A$-cellular, the group $H$ is $\pi_1A$-cellular. To see this note that, by the Seifert-van Kampen Theorem,
the collection of all connected spaces with $\pi_1A$-cellular  fundamental group is a cellular class. Since it contains $A$, it has to include
the smallest cellular  collection ${\mathcal C}(A)$, and in particular  it contains  $K(H,1)$.

By the universal property of the cellularization and Theorem~\ref{thm excellul}.(A), the following map is  a weak   equivalence:
\[
\text{map}_{\ast}(A, c_{A,K(G,1)})\colon \text{map}_{\ast}(A, K(H,1))\to \text{map}_{\ast}(A, K(G,1))
\]
Thus, on $\pi_1$, we get that the homomorphism  $\pi_1c_{A,K(G,1)}\colon H\to G$ induces a bijection:
\[
\text{Hom}(\pi_1A, \pi_1c_{A,K(G,1)})\colon \text{Hom}(\pi_1A, H)\cong \text{Hom}(\pi_1A, G)
\]
This homomorphism  $\pi_1c_{A,K(G,1)}\colon H\to G$ is therefore the $\pi_1A$-cellularization and $H$ is isomorphic to
$\text{\rm cell}_{\pi_1A}G$.
\end{proof}

By  \cite[Theorem~1.4 (2)]{MR2269828}, the group theoretical cellularizations of a finite nilpotent group $N$ are always subgroups of $N$.
It is sometimes possible to compute all possible such cellularizations.

\begin{example}
\label{ex dihedral}
Let $D_{2^n}$ denote the dihedral group of order $2^n$ for $n \geq 2$. This is the group of symmetries of a regular polygon with $2^{n-1}$
sides and it is nilpotent of class $n-1$. The third author showed in \cite[Proposition 5.1]{Ramon} that $K(D_{2^n},1)$ is $\mathbb Z/2$-cellular. We have
hence only two possible cellularizations, $\text{\rm cell}_A K(D_{2^n},1)$ can be contractible or $\text{\rm cell}_A K(D_{2^n},1)\simeq K(D_{2^n}, 1)$.
The latter is obtained for example for $A = K(\mathbb Z/2, 1)$. For $n\leq 3$, we were able to perform these computations by hand, but already
for $n=4$ we do not know of a direct calculation of all cellularizations of $K(D_{16},1)$ without showing first that they must be $K(G, 1)$'s.
\end{example}

We can  also  strengthen Corollary~\ref{cor nilpotnentfundgr}.  Instead of assuming
that the fundamental group of $X$ is nilpotent we make it nilpotent by taking the quotient by some stage of the lower central
series.

\begin{cor}
\label{cor quotient}
For any $n\geq 1$,  $K(\pi_1X/ \Gamma_n \pi_1 X ,1)\gg X$.
\end{cor}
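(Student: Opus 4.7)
My plan is to apply Corollary~\ref{cor orbitBnilgr} to reduce the question to a group-theoretic cellularity statement. Assume $X$ is connected (otherwise the conclusion is trivial). Set $G = \pi_1 X$ and $Q = G/\Gamma_n G$; since $Q$ is nilpotent, Corollary~\ref{cor orbitBnilgr} yields $\text{cell}_X K(Q,1) \simeq K(\text{cell}_G Q, 1)$. By Theorem~\ref{thm excellul}.(B), the desired relation $K(Q,1) \gg X$ is equivalent to $\text{cell}_G Q \cong Q$, so the task becomes showing that $Q$ is $G$-cellular as a group.

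The canonical surjection $q \colon G \twoheadrightarrow Q$ lifts by the universal property of the group cellularization to $\tilde q \colon G \to \text{cell}_G Q$ with $p \tilde q = q$, where $p \colon \text{cell}_G Q \to Q$ is the cover. Surjectivity of $q$ immediately forces surjectivity of $p$. For injectivity of $p$, I would invoke the structural description of group cellular covers for nilpotent targets from~\cite{MR2269828}: the cellular cover is realized as the subgroup of $Q$ generated by images of all homomorphisms $G \to Q$. In our situation this subgroup contains $q(G) = Q$, so it equals $Q$ and $p$ is an isomorphism.

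The main obstacle is extending the group-theoretic description of~\cite{MR2269828}, whose sharpest form is stated for finite nilpotent groups, to the possibly infinite nilpotent setting we encounter here. An alternative topological route that avoids this difficulty is to prove directly that $\pi_1 z_{n-2} X \cong Q$ for the modified Bousfield-Kan tower $\{z_k X\}$ of Section~\ref{sec injtwist}, by induction on $k$: the base case is $\pi_1 z_0 X = H_1(X;\mathbb{Z}) = G/\Gamma_2 G$ by Dold-Thom, and the inductive step uses the defining fibration $z_{k+1} X \to z_k X \to \mathbb{Z}\text{Cof}(\mu_{k,X})$ combined with the $1$-connectedness of $\text{Cof}(\mu_{k,X})$ furnished by the cellularity of $z_k$ (Proposition~\ref{prop twistB-cellular}), which makes $\pi_1 \mathbb{Z}\text{Cof}(\mu_{k,X}) = 0$ and pins down $\pi_1 z_{k+1} X$ as a central extension of $\pi_1 z_k X$ that must coincide with $G/\Gamma_{k+3} G$. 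Once $\pi_1 z_{n-2} X \cong Q$ is in place, $P_1 z_{n-2} X = K(Q,1)$ is a nilpotent $1$-Postnikov stage, so Theorem~\ref{thm main} gives $K(Q,1) \gg z_{n-2} X$, and combining with $z_{n-2} X \gg X$ from Proposition~\ref{prop twistB-cellular} yields the result.
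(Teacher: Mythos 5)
Your opening reduction is exactly the paper's: apply Corollary~\ref{cor orbitBnilgr} with $A=X$ to the nilpotent group $Q=G/\Gamma_nG$, so that everything comes down to showing that $Q$ is $G$-cellular as a group. The paper closes this step by citing \cite[Proposition~7.1.(3)]{MR2483285}, which asserts precisely that $G/\Gamma_nG$ is a $G$-cellular group for an \emph{arbitrary} group $G$. Your attempted justification of this step is where the gap lies: the group cellular cover $\text{cell}_GQ\to Q$ is \emph{not} in general ``the subgroup of $Q$ generated by the images of all homomorphisms $G\to Q$'' --- that subgroup is the $G$-socle, i.e.\ the image of the cover, whereas the cover map itself may have a nontrivial central kernel (universal central extensions of perfect groups are the standard example). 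The result of \cite{MR2269828} that cellular covers of nilpotent groups are subgroups is stated for \emph{finite} nilpotent groups, and $G/\Gamma_nG$ has no reason to be finite here. So surjectivity of $p$ is all you actually establish; injectivity is exactly the missing content, and it requires the specific argument (or citation) for lower central series quotients rather than a general structure theorem.

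Your alternative route is also gapped. The inductive step only shows that $\pi_1z_{k+1}X$ is a central extension of $\pi_1z_kX$ by some quotient of $H_2(\text{Cof}(\mu_{k,X}))$, and that $G\to\pi_1z_{k+1}X$ is onto (because the cofibre of $\mu_{k+1,X}$ is simply connected). Together with nilpotency of class at most $k+2$ this yields a \emph{surjection} $G/\Gamma_{k+3}G\twoheadrightarrow\pi_1z_{k+1}X$, but nothing in the argument rules out a strictly larger kernel: pinning the kernel of the central extension down to $\Gamma_{k+2}G/\Gamma_{k+3}G$ would require controlling $H_2(\text{Cof}(\mu_{k,X}))$ and the connecting map, which is Stallings-type input and not formal. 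Consistently with this, Proposition~\ref{tower} of the paper records only a pro-homology isomorphism in general, and a pro-homotopy isomorphism for polyGEMs --- neither gives the level-wise identification $\pi_1z_{n-2}X\cong G/\Gamma_nG$ that your argument needs.
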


\begin{proof}
Let $G=\pi_1X$.
If $X$ is not connected, the corollary is clear. Assume $X$ is connected. In this case
according to Corollary~\ref{cor orbitBnilgr}, $\text{cell}_{X}K(G/ \Gamma_n G ,1)\simeq K(\text{cell}_{G}(G/ \Gamma_nG),1)$.
However $G/ \Gamma_nG$ is  a $G$-cellular group (see~\cite[Proposition 7.1.(3)]{MR2483285}). We can then conclude
$\text{cell}_{X}K(G/ \Gamma_n G ,1)\simeq  K(G/ \Gamma_n G ,1)$, which proves  that $K(G/ \Gamma_n G ,1)$ is $X$-cellular.
\end{proof}

The statement of the next result  does not involve cellularity, however we do not know of a proof which does not use our techniques.
This is the extension to nilpotent fundamental groups of the Bousfield Key Lemma we presented in the introduction.
We recalled the precise statement of the key lemma at the beginning of Section~\ref{sec Bousfield}. It implies for example that,  if $X$ Êis simply
connected, then  the map $\pi_n\colon\text{map}_{\ast}(X,X)\to \text{Hom}(\pi_nX,\pi_nX)$ is a weak equivalence if and only if $X$Ê
is weakly equivalent to $K(\pi_nX,n)$. If  $X$ Êis not simply connected,  then the situation is much more complicated.

\begin{example}
\label{ex symmetric}
The $K(\mathbb Z/2, 1)$-cellularization of $K(\Sigma_3, 1)$ has been computed in~\cite[Example 2.6]{MR2351607}. It is a space
$X$ whose fundamental group is the symmetric group $\Sigma_3$ and its universal cover is the homotopy fiber of
the degree $3$ map on the sphere $S^3$. In particular its homotopy groups are non trivial in infinitely many degrees.
By the universal property of the cellularization we have weak equivalences of mapping spaces
\[
\text{map}_{\ast}(X,X) \simeq \text{map}_{\ast}(X,K(\Sigma_3, 1)) \simeq \text{Hom}(\Sigma_3,\Sigma_3)
\]
The mapping space of pointed self-maps of $X$ is homotopically discrete, but $X$ is far from being a $K(G, 1)$.
This example also shows that the cellularization of non-nilpotent spaces can become quite complicated.
\end{example}

This is in contrast with what we prove for nilpotent fundamental groups. If the pointed mapping space of self-maps
$\text{map}_{\ast}(X,X)$ is homotopically discrete via the evaluation on the fundamental group, then $X$ must
by a $K(G, 1)$.

\begin{thm}\label{cor maindiscmapsp}
Let $X$ be a connected space whose fundamental group  $\pi_1X$ is nilpotent. Assume that
the map $\pi_1\colon\text{map}_{\ast}(X,X)\to\text{Hom}(\pi_1X,\pi_1X)$ is a weak equivalence.
Then $X$ is weakly equivalent to $K(\pi_1X, 1)$.
\end{thm}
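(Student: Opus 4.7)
Let $p\colon X \to P_1X = K(\pi_1X,1)$ denote the first Postnikov section. My goal is to produce a homotopy inverse to $p$ using the cellular mapping-space machinery. First observe that, for any group $G$ and connected space $X$, the pointed mapping space $\text{map}_{\ast}(X, K(G,1))$ is homotopically discrete with $\pi_0 = \text{Hom}(\pi_1X, G)$: indeed $\pi_i\text{map}_{\ast}(X, K(G,1)) = [\Sigma^iX, K(G,1)]_{\ast} = 0$ for $i \geq 1$, since $\Sigma^iX$ is simply connected. Applying this with $G = \pi_1X$, the post-composition map
\[
p_{\ast}\colon \text{map}_{\ast}(X, X) \lra \text{map}_{\ast}(X, K(\pi_1X, 1))
\]
becomes, via the hypothesis and this computation, the identity on $\text{Hom}(\pi_1X, \pi_1X)$ on $\pi_0$ (both sides being discrete). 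Hence $p_{\ast}$ is a weak equivalence.

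Next, because $\pi_1X$ is nilpotent, Corollary~\ref{cor nilpotnentfundgr} gives the cellular inequality $K(\pi_1X, 1) \gg X$. After replacing $X$ and $K(\pi_1X,1)$ by fibrant models, the universal characterization of $X$-cellular spaces (Theorem~\ref{thm universal cell}) applied to the map $p$ promotes the previous weak equivalence to
\[
\text{map}_{\ast}(K(\pi_1X, 1), X) \xrightarrow{\;\sim\;} \text{map}_{\ast}(K(\pi_1X, 1), K(\pi_1X, 1)).
\]
Surjectivity on $\pi_0$ lifts the identity of $K(\pi_1X, 1)$ to a map $s\colon K(\pi_1X, 1) \to X$ with $p\circ s \simeq \text{id}$.

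To finish I argue that $s\circ p \simeq \text{id}_X$, giving a two-sided homotopy inverse. The endomorphism $s\circ p$ induces $\pi_1s \circ \pi_1p = \text{id}_{\pi_1X}$ on fundamental groups: $\pi_1p$ is an isomorphism, and $\pi_1(p\circ s) = \text{id}$ forces $\pi_1s = (\pi_1p)^{-1}$. The standing hypothesis says $\text{map}_{\ast}(X, X)$ is homotopically discrete with components classified by the induced homomorphism on $\pi_1$, so $s\circ p$ and $\text{id}_X$ lie in the same component, hence are homotopic. Thus $p$ is a weak equivalence and $X\simeq K(\pi_1X,1)$.

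The real work has already been done before we reach this theorem: everything hinges on the cellular inequality $K(\pi_1X,1) \gg X$ (Corollary~\ref{cor nilpotnentfundgr}), itself a consequence of the main Theorem~\ref{thm main}. Without this input there is no reason whatsoever that maps \emph{out of} $K(\pi_1X, 1)$ should behave well with respect to $X$-local information; with it in hand, the rest is a clean combination of the universal mapping-space property of cellularity and the standard fact that mapping a connected space into an aspherical target produces a homotopically discrete space.
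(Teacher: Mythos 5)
Your argument is correct, but it follows a different route from the paper's. The paper's proof is two lines: the hypothesis makes $\text{map}_{\ast}(X,p_{1,X})$ a weak equivalence, hence $X\simeq \text{cell}_XK(\pi_1X,1)$ by the defining universal property of the cellularization, and then Corollary~\ref{cor orbitBnilgr} identifies $\text{cell}_XK(\pi_1X,1)$ with $K(\text{cell}_{\pi_1X}(\pi_1X),1)=K(\pi_1X,1)$. That corollary in turn rests on the preservation results (Theorem~\ref{thm preservationpolygem} and Corollary~\ref{cor npostpreserv}) and on the group-theoretic cellularization of nilpotent groups. You instead take only the cellular inequality $K(\pi_1X,1)\gg X$ of Corollary~\ref{cor nilpotnentfundgr} and feed it into the universal characterization of cellularity (Theorem~\ref{thm universal cell}) to transport the weak equivalence $\text{map}_{\ast}(X,p)$ to $\text{map}_{\ast}(K(\pi_1X,1),p)$, extract a section $s$ of $p$ on $\pi_0$, and then use the discreteness hypothesis on $\text{map}_{\ast}(X,X)$ once more to see that $s\circ p\simeq\text{id}_X$. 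This is more self-contained: it bypasses Corollary~\ref{cor orbitBnilgr} and the whole polyGEM-preservation apparatus, using nothing beyond the main theorem's consequence $K(\pi_1X,1)\gg X$ and a Yoneda-style argument; the price is a slightly longer verification that $s$ is a two-sided inverse. One small point of hygiene: your identification $\pi_i\,\text{map}_{\ast}(X,K(G,1))\cong [\Sigma^iX,K(G,1)]_{\ast}$ literally computes the homotopy groups at the component of the constant map; to see that \emph{every} component is weakly contractible it is cleaner to note that $\text{map}_{\ast}(X,K(G,1))\simeq \text{map}_{\ast}(K(\pi_1X,1),K(G,1))$, which is homotopically discrete with $\pi_0=\text{Hom}(\pi_1X,G)$ by the standard computation of mapping spaces between aspherical spaces. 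This is the same fact the paper uses implicitly, so it is not a gap, only a point worth stating precisely.
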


\begin{proof}
The assumptions imply that $\text{map}_{\ast}(X,p_{1,X})\colon \text{map}_{\ast}(X,X)\to \text{map}_{\ast}(X, P_1X)$ is a weak equivalence.
Thus $X\simeq \text{cell}_XK(\pi_1X,1)$, which by Corollary~\ref{cor orbitBnilgr}, means that $X\simeq K(\pi_1X,1)$.
\end{proof}

Here is another way to restate this result. If the first Postnikov section $X \rightarrow K(G, 1)$ induces a
weak equivalence on pointed mapping spaces  $\text{map}_{\ast}(X,X) \simeq \text{map}_{\ast}(X,K(G, 1))$, then
$X$ is a $K(G, 1)$.
We also offer a version for higher Postnikov sections. The very same argument as in the proof of Theorem~\ref{cor maindiscmapsp}
can be used to show:

\begin{cor}
\label{thm mapping2}
Let $X$ be a connected space whose $n$-th Postnikov section $P_nX$ is nilpotent. Assume that the map
$P_n\colon\text{map}_{\ast}(X,X)\to\text{map}_{\ast}(P_nX,P_nX)$ is a weak equivalence. Then
$p_{n,X}\colon X\to P_nX$ is a weak equivalence, i.e. $X$ is an $n$-Postnikov stage. \hfill{\qed}
\end{cor}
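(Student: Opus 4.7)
The plan is to adapt the proof of Theorem \ref{cor maindiscmapsp} almost verbatim, replacing $P_1$ by $P_n$. The essential inputs are the universal property of the $X$-cellular cover (Theorem \ref{thm excellul}) and Theorem \ref{thm main}, which supplies the cellular inequality $P_nX \gg X$ under the nilpotency hypothesis on $P_nX$.

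First I would deduce from the hypothesis that post-composition with $p_{n,X}$ induces a weak equivalence
\[
\text{map}_{\ast}(X, p_{n,X})\colon \text{map}_{\ast}(X, X) \longrightarrow \text{map}_{\ast}(X, P_nX).
\]
By naturality of the Postnikov section one has $p_{n,X}\circ f \simeq (P_nf)\circ p_{n,X}$ for every $f\colon X\to X$, so this map factors as the assumed weak equivalence $P_n\colon \text{map}_{\ast}(X,X)\to \text{map}_{\ast}(P_nX, P_nX)$ followed by pre-composition with $p_{n,X}$. The latter is a weak equivalence because $P_nX$ is $n$-truncated and $p_{n,X}$ is an isomorphism on $\pi_i$ for $i\leq n$.

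Next I would use Theorem \ref{thm excellul} to identify $X$ with $\text{cell}_X P_nX$ over $P_nX$. Since $X$ is trivially $X$-cellular and $\text{map}_{\ast}(X, p_{n,X})$ is a weak equivalence, the map $p_{n,X}\colon X\to P_nX$ satisfies the defining universal properties of the $X$-cellular cover of $P_nX$; hence $p_{n,X}$ is weakly equivalent to $c_{X, P_nX}\colon \text{cell}_X P_nX \to P_nX$.

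The conclusion is now immediate: by the nilpotency of $P_nX$ and Theorem \ref{thm main}, we have $P_nX \gg X$, so $P_nX$ itself is $X$-cellular, and Theorem \ref{thm excellul}.(B) implies that $c_{X, P_nX}$ is a weak equivalence. Hence $p_{n,X}$ is a weak equivalence as claimed. The only step that requires real care is the identification of $p_{n,X}$ with the cellular cover in paragraph three; once this is in hand, the conclusion follows directly from Theorem \ref{thm main}.
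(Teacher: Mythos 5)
Your proof is correct and follows essentially the same route the paper intends: deduce that $\text{map}_{\ast}(X,p_{n,X})$ is a weak equivalence, identify $X$ with $\text{cell}_X P_nX$ via the universal property of Theorem~\ref{thm excellul}, and conclude from $P_nX\gg X$ (Theorem~\ref{thm main}). The only cosmetic difference is that the paper's model proof for $n=1$ closes with Corollary~\ref{cor orbitBnilgr} (equivalently one could invoke Corollary~\ref{cor npostpreserv} for general $n$), whereas you appeal directly to Theorem~\ref{thm main} and Theorem~\ref{thm excellul}.(B), which is if anything more direct.
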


Here is another  application of Theorem~\ref{thm main} and  the characterization
of cellularity in Theorem~\ref{thm universal cell}. Notice however that the statement is not true for a general space~$X$.
Neither is the analogous statement  for higher connected covers of  a nilpotent $X$ as we have seen in
Example~\ref{ex connectedcover}.

\begin{cor}
\label{cor contractible}
Assume  $X$ is nilpotent. If  $\text{\rm map}_*(X,Y)$
is contractible, then so is $\text{\rm map}_*(P_nX, Y)$ for any $n\geq 0$. \hfill{\qed}
 \end{cor}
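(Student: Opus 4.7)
The plan is to apply the universal characterization of cellularity from Theorem~\ref{thm universal cell} to the constant map $Y\to \Delta[0]$, after reducing to the fibrant case. The argument is essentially a one-line combination of Theorem~\ref{thm main} and Theorem~\ref{thm universal cell}.

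First, I would observe that since $X$ is nilpotent, every Postnikov section $P_nX$ is nilpotent as well (Postnikov truncation preserves nilpotence: the fundamental group surjects, and each homotopy group is unchanged up to degree $n$ with a nilpotent action). Hence Theorem~\ref{thm main} applies and gives the cellular inequality $P_nX \gg X$.

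Next, without loss of generality I may replace $Y$ by a fibrant replacement $Y \hookrightarrow RY$, which does not change the weak homotopy type of either mapping space $\text{map}_*(X,Y)$ or $\text{map}_*(P_nX,Y)$. The hypothesis then says that $\text{map}_*(X, RY)$ is contractible, i.e.\ the map
\[
\text{map}_*(X, RY) \longrightarrow \text{map}_*(X, \Delta[0]) = \Delta[0]
\]
induced by the terminal map $f\colon RY \to \Delta[0]$ is a weak equivalence between fibrant spaces. By Theorem~\ref{thm universal cell} applied with $A = X$ and the cellular space $P_nX$, the map $\text{map}_*(P_nX, f)\colon \text{map}_*(P_nX, RY) \to \Delta[0]$ is also a weak equivalence, which means $\text{map}_*(P_nX, Y) \simeq \text{map}_*(P_nX, RY) \simeq \Delta[0]$, as desired.

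There is essentially no obstacle beyond checking that the hypotheses of Theorem~\ref{thm universal cell} apply; the real content is Theorem~\ref{thm main}, which supplies the cellular inequality $P_nX \gg X$ that is unavailable for non-nilpotent $X$ (compare Example~\ref{ex connectedcover} for the cellular cover analogue, which shows the statement genuinely fails in general).
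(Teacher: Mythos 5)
Your proof is correct and is exactly the argument the paper intends: the corollary is stated with a \qed precisely because it follows by combining Theorem~\ref{thm main} (giving $P_nX\gg X$, using that Postnikov sections of a nilpotent space are nilpotent) with the universal characterization of cellularity in Theorem~\ref{thm universal cell} applied to the terminal map from a fibrant replacement of $Y$. Your handling of the fibrancy hypothesis is the only detail the paper leaves implicit, and you treat it correctly.
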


In the last part of this section we offer a few results which state simple - but not obvious - homological properties
of nilpotent spaces. All are immediate consequences of our main theorem.
\begin{thm}\label{thm homology}
Let $\kk$ be a reduced homology theory.
\begin{enumerate}
\item Assume $X$ is $\kk$-acyclic. If $P_nX$ is nilpotent, then it is also $\kk$-acyclic.
\item Assume  $X$ is nilpotent. Then $\prod_{k \geq 1} K(\pi_kX ,k)$ is $\kk$-acyclic if and only if
$\prod_{k \geq 1} K(H_kX ,k)$ is  $\kk$-acyclic.
\item If $K(G,1)$ is $\kk$-acyclic, then so is $K(G/\Gamma_n G,1)$ for any $n$. \hfill{\qed}
\end{enumerate}
\end{thm}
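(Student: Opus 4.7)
The plan is to first observe that for any reduced homology theory $\kk$ the class $\mathcal{A}_{\kk}$ of $\kk$-acyclic spaces is an acyclic class in the sense of Section~\ref{sec:notation}. Indeed, the wedge axiom gives closure under arbitrary wedges, Mayer--Vietoris gives closure under homotopy push-outs, the telescope vanishing of $\lim^1$ of acyclic sequences gives closure under sequential homotopy colimits, and an Atiyah--Hirzebruch--Serre spectral sequence argument shows that if the fiber and base of a fibration are $\kk$-acyclic, then so is the total space. Hence $\mathcal{A}_{\kk}$ is closed under weak equivalences, homotopy colimits, and extensions by fibrations, so $X\in\mathcal{A}_{\kk}$ immediately implies $\calc(X)\subseteq\mathcal{A}_{\kk}$ and $\overline{\calc(X)}\subseteq\mathcal{A}_{\kk}$. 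Each of the three parts will then be a one-line consequence of an earlier result.

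For part (1), suppose $X$ is $\kk$-acyclic and $P_nX$ is nilpotent. By Theorem~\ref{thm main} we have $P_nX\gg X$, so $P_nX\in\calc(X)\subseteq\mathcal{A}_{\kk}$, i.e.\ $P_nX$ is $\kk$-acyclic.

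For part (2), assume $X$ is nilpotent. The corollary stated at the very end of Section~6 (following Theorem~\ref{thm main}) gives the equality of acyclic classes
\[
\overline{\calc({\z}(X))}=\overline{\calc\bigl(\prod_{k\geq 0}K(\pi_kX,k)\bigr)},
\]
together with the Dold--Thom identification ${\z}X\simeq\prod_{k\geq 0}K(H_kX,k)$. Since adding a contractible factor (coming from $\pi_0X=0=\tilde H_0X$ for connected $X$) does not change the acyclic class, we obtain $\overline{\calc(\prod_{k\geq 1}K(H_kX,k))}=\overline{\calc(\prod_{k\geq 1}K(\pi_kX,k))}$. If either product is $\kk$-acyclic, the acyclic class it generates lies inside $\mathcal{A}_{\kk}$, and hence so does the other product; this is the claimed equivalence.

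For part (3), apply Corollary~\ref{cor quotient} with $X=K(G,1)$: this gives $K(G/\Gamma_nG,1)\gg K(G,1)$. If $K(G,1)\in\mathcal{A}_{\kk}$, then $K(G/\Gamma_nG,1)\in\calc(K(G,1))\subseteq\mathcal{A}_{\kk}$, as desired. The only non-trivial point in the whole argument is the verification that $\mathcal{A}_{\kk}$ is closed under extensions by fibrations; everything else is immediate from the machinery already developed, and in particular from Theorem~\ref{thm main}.
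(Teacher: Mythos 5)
Your proof is correct and follows exactly the route the paper intends: the paper states this theorem with no written proof beyond the remark that all three parts are ``immediate consequences of our main theorem,'' and you supply precisely the intended ingredients --- the standard fact that $\kk$-acyclic spaces form an acyclic (hence cellular) class, Theorem~\ref{thm main} for (1), the corollary $\overline{\calc({\z}X)}=\overline{\calc(\prod K(\pi_kX,k))}$ together with Dold--Thom for (2), and Corollary~\ref{cor quotient} for (3).
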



\bibliographystyle{plain}\label{biblography}

\begin{thebibliography}{10}

\bibitem{MR2995665}
M.~Blomgren, W.~Chach{\'o}lski, E.~D. Farjoun, and Y.~Segev.
\newblock Idempotent transformations of finite groups.
\newblock {\em Adv. Math.}, 233:56--86, 2013.

\bibitem{MR95c:55010}
A.~K. Bousfield.
\newblock Localization and periodicity in unstable homotopy theory.
\newblock {\em J. Amer. Math. Soc.}, 7(4):831--873, 1994.

\bibitem{MR98m:55009}
A.~K. Bousfield.
\newblock Homotopical localizations of spaces.
\newblock {\em Amer. J. Math.}, 119(6):1321--1354, 1997.

\bibitem{MR51:1825}
A.~K. Bousfield and D.~M. Kan.
\newblock {\em Homotopy limits, completions and localizations}.
\newblock Springer-Verlag, Berlin, 1972.
\newblock Lecture Notes in Mathematics, Vol. 304.

\bibitem{MR97i:55023}
W.~Chach{\'o}lski.
\newblock On the functors ${C}{W}\sb {A}$ and ${P}\sb {A}$.
\newblock {\em Duke Math. J.}, 84(3):599--631, 1996.

\bibitem{MR98i:55013}
W.~Chach{\'o}lski.
\newblock Desuspending and delooping cellular inequalities.
\newblock {\em Invent. Math.}, 129(1):37--62, 1997.

\bibitem{MR98e:55014}
W.~Chach{\'o}lski.
\newblock A generalization of the triad theorem of {B}lakers-{M}assey.
\newblock {\em Topology}, 36(6):1381--1400, 1997.

\bibitem{MR2483285}
W.~Chach{\'o}lski, E.~Damian, E.~D. Farjoun, and Y.~Segev.
\newblock The {$A$}-core and {$A$}-cover of a group.
\newblock {\em J. Algebra}, 321(2):631--666, 2009.

\bibitem{MR2242617}
W.~Chach{\'o}lski, W.~G. Dwyer, and M.~Intermont.
\newblock {$v_*$}-torsion spaces and thick classes.
\newblock {\em Math. Ann.}, 336(1):13--26, 2006.

\bibitem{MR2073317}
W.~Chach{\'o}lski, P.-E. Parent, and D.~Stanley.
\newblock Cellular generators.
\newblock {\em Proc. Amer. Math. Soc.}, 132(11):3397--3409 (electronic), 2004.

\bibitem{newblackersmassey}
W.~Chach{\'o}lski and J.~Scherer.
\newblock A new cellular version of {B}lakers-{M}assey.
\newblock Preprint, available at:
  http://gr-he.epfl.ch/files/content/sites/gr-he/files/Scherer/Blakers-Massey.pdf.

\bibitem{MR0279808}
E.~B. Curtis.
\newblock Simplicial homotopy theory.
\newblock {\em Advances in Math.}, 6:107--209 (1971), 1971.

\bibitem{MR0097062}
A.~Dold and R.~Thom.
\newblock Quasifaserungen und unendliche symmetrische {P}rodukte.
\newblock {\em Ann. of Math. (2)}, 67:239--281, 1958.

\bibitem{MR96g:55011}
E.~Dror~Farjoun.
\newblock Cellular inequalities.
\newblock In {\em The \v Cech centennial (Boston, MA, 1993)}, pages 159--181.
  Amer. Math. Soc., Providence, RI, 1995.

\bibitem{MR98f:55010}
E.~Dror~Farjoun.
\newblock {\em Cellular spaces, null spaces and homotopy localization}.
\newblock Springer-Verlag, Berlin, 1996.

\bibitem{MR2200850}
W.~G. Dwyer, J.~P.~C. Greenlees, and S.~Iyengar.
\newblock Duality in algebra and topology.
\newblock {\em Adv. Math.}, 200(2):357--402, 2006.

\bibitem{MR2002e:55012}
E.~Dror Farjoun.
\newblock Two completion towers for generalized homology.
\newblock In {\em Une d\'egustation topologique [Topological morsels]: homotopy
  theory in the Swiss Alps (Arolla, 1999)}, volume 265 of {\em Contemp. Math.},
  pages 27--39. Amer. Math. Soc., Providence, RI, 2000.

\bibitem{MR2269828}
E.~Dror Farjoun, R.~G{\"o}bel, and Y.~Segev.
\newblock Cellular covers of groups.
\newblock {\em J. Pure Appl. Algebra}, 208(1):61--76, 2007.

\bibitem{Ramon}
R.~Flores.
\newblock Nullification and cellularization of classifying spaces of finite
  groups.
\newblock {\em Trans. Amer. Mat. Soc.}, 359(4):1791--1816, 2007.

\bibitem{MR2351607}
R.~J. Flores and J.~Scherer.
\newblock Cellularization of classifying spaces and fusion properties of finite
  groups.
\newblock {\em J. Lond. Math. Soc. (2)}, 76(1):41--56, 2007.

\bibitem{MR31:4033}
T.~Ganea.
\newblock A generalization of the homology and homotopy suspension.
\newblock {\em Comment. Math. Helv.}, 39:295--322, 1965.

\bibitem{MR2581213}
J.~Kiessling.
\newblock Classification of certain cellular classes of chain complexes.
\newblock {\em Israel J. Math.}, 174:179--188, 2009.

\bibitem{MR1953240}
A.~Libman.
\newblock Universal spaces for homotopy limits of modules over coaugmented
  functors. {I}.
\newblock {\em Topology}, 42(3):555--568, 2003.

\bibitem{MR750716}
H.~Miller.
\newblock The {S}ullivan conjecture on maps from classifying spaces.
\newblock {\em Ann. of Math. (2)}, 120(1):39--87, 1984.

\bibitem{MR0059548}
J.-P. Serre.
\newblock Groupes d'homotopie et classes de groupes ab\'eliens.
\newblock {\em Ann. of Math. (2)}, 58:258--294, 1953.

\bibitem{MR2520402}
S.~Shamir.
\newblock Cellular approximations and the {E}ilenberg-{M}oore spectral
  sequence.
\newblock {\em Algebr. Geom. Topol.}, 9(3):1309--1340, 2009.

\end{thebibliography}

\end{document}